\title{Communicating harmonic pencils of lines}
\author{Norbert Hungerb\"uhler}
\author{Clemens Pohle}
\affil{Department of Mathematics, ETH Z\"urich, 8092 Z\"urich, Switzerland}
\date{}
\newtheorem{theorem}{Theorem}
\newtheorem{proposition}[theorem]{Proposition}
\newtheorem{corollary}[theorem]{Corollary}
\newtheorem{lemma}[theorem]{Lemma}
\theoremstyle{definition}
\definecolor{darkgreen}{rgb}{0,.6,0}
\renewcommand{\theenumi}{\roman{enumi}}
\renewcommand{\labelenumi}{(\theenumi)}
\begin{document}
\maketitle
\begin{abstract}
\noindent Suppose there are $n$ harmonic pencils of lines given in the plane.
We are interested in the question whether certain triples of these lines
are concurrent or if triples of intersection points of these lines are collinear,
provided that we impose suitable conditions on the initial harmonic pencils. Such conditions
can be that certain of the given lines coincide, are concurrent or that certain intersection points
are collinear. The study of these questions for $n=2,
3, 4$ 
sheds light on
some well known affine configurations 
and provides new results in the projective setting. 
As applications, we will formulate generalizations or stronger versions of
the theorems of Pappus, Desargues, Ceva and Menelaos. Notably, the generalized theorems of Ceva and Menelaos suggest a new way to generalize the terms `collinearity' and `concurrency'.
\end{abstract}
\section{Introduction}\label{sec:introduction}
Before we start, let us briefly fix some notation: Points will be denoted by capital letters, lines
by small letters. The intersection point of two lines $g$ and $h$ is $g\times h$,
and $A\times B$ denotes the line trough $A$ and $B$. The term $\frac{AB}{CD}$
is the quotient of the oriented lengths of the segments $AB$ and $CD$ 
on an oriented line.  The cross-ratio of four collinear points $A,B,C,D$ is
$$
\operatorname{CR}(A,B;C,D):=\frac{AC}{CB}\cdot\frac{BD}{DA}.
$$
A harmonic quadruple of collinear points $A,B,X,Y$ or concurrent lines $g,h,u,v$
will be denoted by
$(AB;XY)$ and $(gh;uv)$, respectively: $Y$ is the harmonic conjugate of $X$
with respect to $A$ and $B$ (see, e.g.,~\cite{coxeter}). Finally, we denote
by $[ABC]$ the signed area of a triangle $ABC$ with respect to a given orientation of the 
plane. In particular $[ABC]=-[CBA]$ (see Figure~\ref{fig-ABC}).

Suppose we are given points $A_1,A_2,\ldots,A_n$ in the projective plane
and each point $A_i$ carries a harmonic pencil of four lines which meet in $A_i$.
In general there will be $8n(n-1)$ further intersection points of these lines.
Interesting constellations concerning these lines and their points of intersection are
\begin{itemize}
\item concurrent triples of lines,
\item collinear triples of intersection points.
\end{itemize}
Of course, without any further conditions no such constellation will occur.
This changes if we let the $n$ initial harmonic pencils of lines
``communicate'' with each other. This means that we impose conditions of the following
kind:
\begin{itemize}
\item certain lines coincide,
\item certain lines are concurrent,
\item certain points of intersection are collinear.
\end{itemize}
Let us have a look at an example with two points $A_1,A_2$. Here we
have the following proposition, illustrated by Figure~\ref{fig:two-pencils}.
\begin{proposition}\label{thm:two-pencils}
Let $A_1,A_2$ be two points in the projective plane and $(a_{i1}a_{i2};b_{i1}b_{i2})$
harmonic pencils of lines at point $A_i$, $i\in\{1,2\}$. Then the following
holds: If three of the points
$$
a_{11}\times a_{21}, \quad a_{12}\times a_{22}, \quad 
b_{11}\times b_{21}, \quad b_{12}\times b_{22}
$$
are collinear, then so are all four points.
\end{proposition}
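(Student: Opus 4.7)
I would label the four intersection points
\[
P_1:=a_{11}\times a_{21},\qquad P_2:=a_{12}\times a_{22},\qquad P_3:=b_{11}\times b_{21},\qquad P_4:=b_{12}\times b_{22}.
\]
Using the symmetries of a harmonic quadruple (simultaneously swapping the two $a$-lines at both pencils, likewise for the $b$-lines, or swapping the $a$-pair with the $b$-pair), I may assume without loss of generality that the three collinear points are $P_1,P_2,P_3$ and that the point in question is $P_4$. Let $\ell$ be the line carrying $P_1,P_2,P_3$.

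The main idea is to cut both pencils by $\ell$ and invoke uniqueness of the fourth harmonic point. First I will treat the generic case in which $\ell$ contains neither $A_1$ nor $A_2$; then intersecting the lines of each pencil with $\ell$ is a perspectivity from the pencil onto $\ell$. Cutting the pencil at $A_1$ by $\ell$ sends $a_{11},a_{12},b_{11},b_{12}$ respectively to $P_1,P_2,P_3$ and $Q_1:=b_{12}\times\ell$; since perspectivities preserve cross-ratios, the image quadruple $(P_1P_2;P_3Q_1)$ inherits the harmonic property from $(a_{11}a_{12};b_{11}b_{12})$. The same argument applied at $A_2$ yields that $(P_1P_2;P_3Q_2)$ is harmonic, with $Q_2:=b_{22}\times\ell$. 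By uniqueness of the harmonic conjugate of $P_3$ with respect to $\{P_1,P_2\}$, one gets $Q_1=Q_2$, and this common point lies on both $b_{12}$ and $b_{22}$, so it must equal $P_4$. Hence $P_4\in\ell$.

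The main obstacle I anticipate is disposing of the degenerate sub-cases in which the central projections above are not well-defined, namely when $\ell$ passes through $A_1$ or $A_2$, or when some pencil line coincides with $\ell$ or with $A_1\times A_2$. I would handle these either by direct inspection — each such coincidence cascades into further coincidences making the conclusion immediate — or by a Zariski-closure argument: the assertion is a polynomial incidence relation among the coefficients of the eight pencil lines, and so the generic argument extends to the exceptional locus by continuity.
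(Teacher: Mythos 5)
Your proof is correct and follows essentially the same route as the paper's: cut both pencils by the line through the three given points, use that a harmonic pencil meets a line in a harmonic quadruple, and conclude by uniqueness of the fourth harmonic element (you phrase this via the fourth harmonic point on $\ell$, the paper via the fourth harmonic line of the pencil — dual formulations of the same fact). Your additional care with the WLOG reduction and the degenerate cases is sound but not needed beyond what the paper records.
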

\begin{proof}
The intersection points $P,Q,X,Y$ of a harmonic pencil with a
line $g$ are harmonic points. And vice versa, if four collinear
harmonic points are connected with a point $A_1$ or $A_2$,
the connecting lines are a harmonic pencil. Therefore the 
claim follows from the fact that the fourth harmonic line
is determined by the three others.
\end{proof}
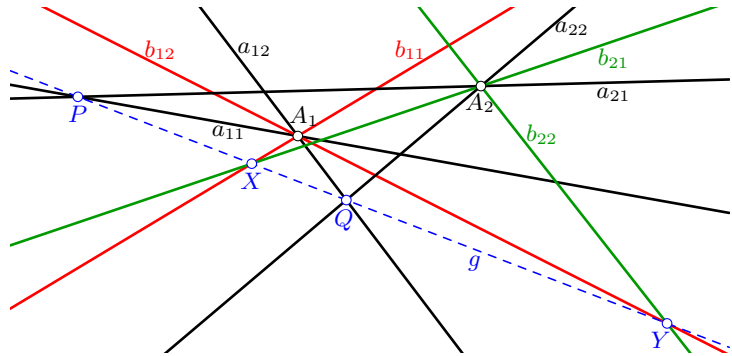
\begin{figure}[h]
\begin{center}
\begin{tikzpicture}[line cap=round,line join=round,x=8,y=8]
\clip(-14.730570790934566,-12.28672021134537) rectangle (18.911531168763908,4);
\draw [line width=1.pt,domain=-14.730570790934566:18.911531168763908] plot(\x,{(-20.26173681642186-1.5833669768678047*\x)/8.8100162559055});
\draw [line width=1.pt,domain=-14.730570790934566:18.911531168763908] plot(\x,{(--16.64513402985136--5.886877221687991*\x)/-4.4253077045792635});
\draw [line width=1.pt,color=red,domain=-14.730570790934566:18.911531168763908] plot(\x,{(--16.07831012920593-7.470244198555797*\x)/-12.34214258891831});
\draw [line width=1.pt,color=red,domain=-14.730570790934566:18.911531168763908] plot(\x,{(-33.566167163890896-6.311102537793815*\x)/12.333444877538447});
\draw [line width=1.pt,domain=-14.730570790934566:18.911531168763908] plot(\x,{(--61.07424400921481-8.769417102652456*\x)/-10.271585773014252});
\draw [line width=1.pt,domain=-14.730570790934566:18.911531168763908] plot(\x,{(-1.455548138168488-0.4465906857832269*\x)/-16.64565283373851});
\draw [line width=1.pt,color=darkgreen,domain=-14.730570790934566:18.911531168763908] plot(\x,{(-38.45021960543335--5.115493309880599*\x)/-3.978717018796036});
\draw [line width=1.pt,color=darkgreen,domain=-14.730570790934566:18.911531168763908] plot(\x,{(--154.7488296342937-23.93164502474728*\x)/-70.12986417502907});
\draw [line width=0.6pt,dash pattern=on 3pt off 3pt,color=blue,domain=-14.730570790934566:18.911531168763908] plot(\x,{(--91.35549122694164--7.534883419740403*\x)/-19.425429045049324});
\begin{small}
\draw [color=black,fill=white] (-1.2702586456067821,-2.0715570431159733) circle (1.8pt);
\draw[color=black] (-.98,-1.15) node {$A_1$};
\draw[color=black] (-4.5,-1.9) node {$a_{11}$};
\draw[color=black] (-3.3,2) node {$a_{12}$};
\draw[color=red] (4.,2) node {$b_{11}$};
\draw[color=red] (-7.7,2) node {$b_{12}$};
\draw [color=black,fill=white] (7.296162690780596,0.28319384555922317) circle (1.8pt);
\draw[color=black] (7.25,-0.5) node {$A_2$};
\draw[color=black] (13.5,-.1) node {$a_{21}$};
\draw[color=black] (11.55,3) node {$a_{22}$};
\draw[color=darkgreen] (10.15,-2) node {$b_{22}$};
\draw[color=darkgreen] (13.5,1.6) node {$b_{21}$};

\draw [color=blue] (7,-7.3)  node[anchor=north] {$g$};

\draw [color=blue] (15.7,-10.90978732575519)  node[anchor=north] {$Y$};
\draw [color=blue,fill=white] (16.00181471291406,-10.90978732575519) circle (1.8pt);

\draw [color=blue] (-3.423614332135262,-3.3749039060147865) node[anchor=north]  {$X$};
\draw [color=blue,fill=white] (-3.423614332135262,-3.3749039060147865) circle (1.8pt);

\draw [color=blue,fill=white] (.88,-5.15) node[anchor=north]  {$Q$};
\draw [color=blue,fill=white] (1.0002565329103499,-5.091967142978203) circle (1.8pt);

\draw [color=blue,fill=white] (-11.557799530162267,-0.2226441652465594) node[anchor=north]  {$P$};
\draw [color=blue,fill=white] (-11.557799530162267,-0.2226441652465594) circle (1.8pt);
\end{small}
\end{tikzpicture}
\caption{The configuration in Proposition~\ref{thm:two-pencils}.}\label{fig:two-pencils}
\end{center}
\end{figure}
A nice special case of Proposition~\ref{thm:two-pencils} occurs, when $a_{11}=a_{21}=a$ (see Figure~\ref{fig:corollary}):
\begin{corollary}\label{cor:two-pencils}
Let $A_1,A_2$ be two points in the projective plane with $a=A_1\times A_2$,
and $(a,a_{i};b_{i1}b_{i2})$
harmonic pencils of lines at point $A_i$, $i\in\{1,2\}$. Then the triples of points
\begin{alignat*}{3}
&a_{1}\times a_{2}, \quad&&b_{11}\times b_{21}, \quad&& b_{12}\times b_{22}\\
&a_{1}\times a_{2}, \quad&&b_{11}\times b_{22}, \quad&& b_{12}\times b_{21}
\end{alignat*}
are collinear.
\end{corollary}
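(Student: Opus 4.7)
The plan is to reduce each of the two claimed collinearities to the uniqueness of the harmonic conjugate on a single line. Morally, this is the limit of Proposition~\ref{thm:two-pencils} in which $a_{11}$ and $a_{21}$ both collapse to $a$; the role of the now-degenerate ``intersection'' $a_{11}\times a_{21}$ is played by the point where the candidate line of collinearity meets $a$.

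For the first triple, set $X := a_1\times a_2$, $Y := b_{11}\times b_{21}$, and let $\ell := X\times Y$. I would first verify that $X$ does not lie on $a$: otherwise $a_1$ would pass through both $X$ and $A_1\in a$, forcing $a_1=a$, which contradicts the fact that a harmonic pencil consists of four distinct lines. Hence $\ell\neq a$, and $W := \ell\cap a$ is a genuine additional point of $\ell$.

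Intersecting the harmonic pencil $(a,a_1;b_{11}b_{12})$ at $A_1$ with $\ell$, the central projection from $A_1$ preserves the cross-ratio and produces a harmonic quadruple on $\ell$ of the form $(WX;Y\,Y')$, where $Y' := \ell\cap b_{12}$. The same argument applied at $A_2$ to the pencil $(a,a_2;b_{21}b_{22})$ yields $(WX;Y\,Y'')$ with $Y'' := \ell\cap b_{22}$. Since the harmonic conjugate of $Y$ with respect to $\{W,X\}$ is unique, $Y'=Y''$; this says precisely that $b_{12}$ and $b_{22}$ meet on $\ell$, giving the first collinearity. The second is obtained identically after interchanging the labels $b_{21}$ and $b_{22}$ in the pencil at $A_2$.

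The main expected obstacle is only the bookkeeping of possible degeneracies, for instance coincidences among $X$, $Y$, and $W$, or the candidate line equaling $a$. The single position check $X\notin a$, together with the standing assumption that each harmonic pencil consists of four distinct lines, is enough to dispose of these uniformly.
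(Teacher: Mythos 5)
Your proof is correct and follows essentially the same route as the paper: the paper obtains this corollary as the special case $a_{11}=a_{21}=a$ of Proposition~\ref{thm:two-pencils}, whose proof is exactly your argument of sectioning both pencils with the candidate line and invoking the uniqueness of the fourth harmonic element, with $W=\ell\cap a$ standing in for the collapsed intersection point. Your explicit handling of the degeneracies (e.g.\ $X\notin a$) is a welcome bit of extra care, not a departure in method.
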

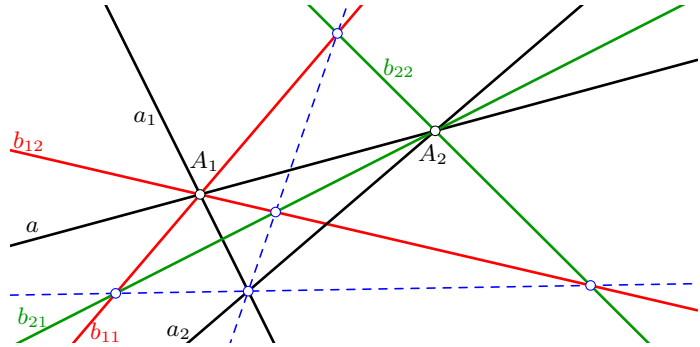
\begin{figure}[h]
\begin{center}
\begin{tikzpicture}[line cap=round,line join=round,x=8,y=8]
\clip(-11.882388126895968,-9) rectangle (20.258588595411787,6.8961286602331695);
\draw [line width=1pt,domain=-11.882388126895968:20.258588595411787] plot(\x,{(--30.185962996371362--7.570180775649525*\x)/-3.7377103347113945});
\draw [line width=1pt,color=red,domain=-11.882388126895968:20.258588595411787] plot(\x,{(-15.214492571772734-11.629544593121711*\x)/-9.837070603796201});
\draw [line width=1pt,color=red,domain=-11.882388126895968:20.258588595411787] plot(\x,{(-36.2619143127954-3.1496593443909777*\x)/13.406468139811231});
\draw [line width=1pt,domain=-11.882388126895968:20.258588595411787] plot(\x,{(--64.9143629745122-9.486223257093233*\x)/-10.975423082233656});
\draw [line width=1pt,domain=-11.882388126895968:20.258588595411787] plot(\x,{(--13.-3.*\x)/-11.});
\draw [line width=1pt,color=darkgreen,domain=-11.882388126895968:20.258588595411787] plot(\x,{(-39.68385307095479--4.4110670294994865*\x)/-4.395316834958898});
\draw [line width=1pt,color=darkgreen,domain=-11.882388126895968:20.258588595411787] plot(\x,{(--47.58630820154577-7.865275287275761*\x)/-15.335894096660317});
\draw [line width=.6pt,dash pattern=on 3pt off 3pt,color=blue,domain=-11.882388126895968:20.258588595411787] plot(\x,{(--145.45075809211988-0.37440719330424166*\x)/-22.21151729317625});
\draw [line width=.6pt,dash pattern=on 3pt off 3pt,color=blue,domain=-11.882388126895968:20.258588595411787] plot(\x,{(-5.6099128633764686--3.7311607272282488*\x)/1.2804214226471884});
\begin{small}
\draw [color=black,fill=white] (-3.,-2.) circle (1.8pt);
\draw[color=black] (-2.8,-0.44) node {$A_1$};
\draw[color=black] (-5.55,1.5923371218985802) node {$a_{1}$};
\draw [fill=red] (-12.837070603796201,-13.629544593121711) circle (1.8pt);
\draw[color=red] (-7.4,-8.5) node {$b_{11}$};
\draw[color=red] (-11.007262523070757,0.5) node {$b_{12}$};
\draw [color=black,fill=white] (8.,1.) circle (1.8pt);
\draw[color=black] (7.9,-0.1) node {$A_2$};
\draw[color=black] (-4.,-8.5) node {$a_{2}$};
\draw[color=black] (-10.848148776920718,-3.552340670285974) node {$a$};
\draw[color=darkgreen] (6.3,3.9790433141491466) node {$b_{22}$};
\draw[color=darkgreen] (-10.8,-7.8) node {$b_{21}$};
\draw [color=blue,fill=white] (15.265089762501265,-6.291123511923858) circle (1.8pt);
\draw [color=blue,fill=white] (-6.946427530674983,-6.665530705228099) circle (1.8pt);
\draw [color=blue,fill=white] (-0.7480227462902367,-6.561047643235842) circle (1.8pt);
\draw [color=blue,fill=white] (0.5323986763569518,-2.829886916007593) circle (1.8pt);
\draw [color=blue,fill=white] (3.423012054120042,5.5933891413487755) circle (1.8pt);
\end{small}
\end{tikzpicture}
\caption{Collinear intersection points of two communicating harmonic pencils (Corollary~\ref{cor:two-pencils}).}\label{fig:corollary}
\end{center}
\end{figure}
In Sections~\ref{sec:affine-triangles} and~\ref{sec:affine-quadrilaterals} we will illustrate the concept
of communicating harmonic pencils of lines with two examples which live in the affine plane. 
There the harmonic pencils do not communicate in the above described purely projective way. The idea
is then to carry these two configurations into the projective plane and to
replace the affine conditions by projectively invariant conditions. This
will be done in Sections~\ref{sec:projective-triangles} and~\ref{sec:projective-quadrilaterals}, respectively. Finally, in Section~\ref{sec:constellations}, we will show how our results can be used to expand some well-known theorems.
\section{Harmonic pencils in affine triangles}\label{sec:affine-triangles}
Let $A_1A_2A_3$ be an affine triangle with sides $a_1,a_2,a_3$, where
the notation is such that $A_i\notin a_i$. Let $g_i$ denote the inner angle 
bisectors of the triangle and $h_i$ the outer angle bisectors in $A_i$.
Then $(a_ia_{i+1}; g_{i+2}h_{i+2})$ is a harmonic pencil of lines for $i\in\{1,2,3\}$ (indices are taken cyclically). 
Obviously,  the following lines are concurrent.
\begin{enumerate}
\item $g_1,g_2,g_3$\label{i}: The intersection point is the center of the incircle of the triangle),
\item $g_i,h_{i+1},h_{i+2}$ for $i\in\{1,2,3\}$\label{ii}: The intersection points are the centers of the excircles (see Figure~\ref{fig:affine-triangle}).
\end{enumerate}
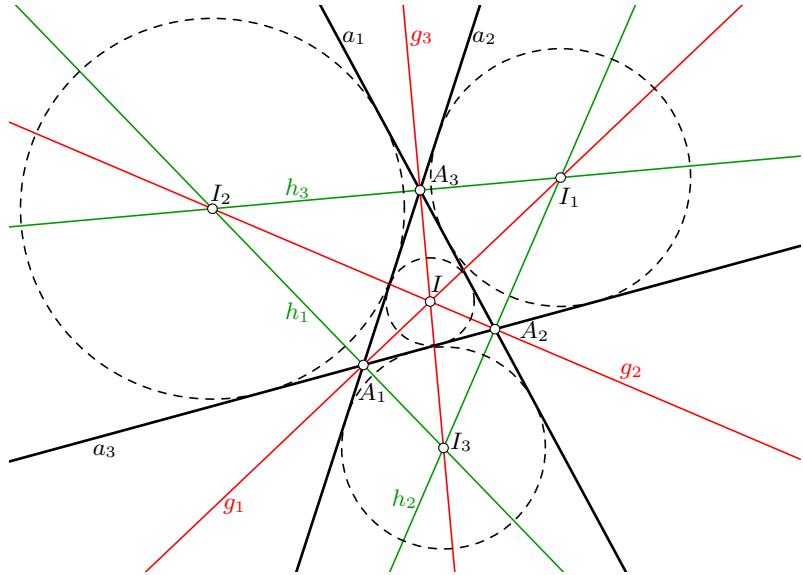
\begin{figure}[h]
\begin{center}
\begin{tikzpicture}[line cap=round,line join=round,x=16,y=16]
\clip(-5.749486941635399,-8.20164253410491) rectangle (12.754592187589372,5.125979015931666);
\draw [line width=1.pt,domain=-5.749486941635399:12.754592187589372] plot(\x,{(-12.431058138197987--0.8456433366034064*\x)/3.077202141529057});
\draw [line width=1.pt,domain=-5.749486941635399:12.754592187589372] plot(\x,{(-14.017661823729167--3.277457012158247*\x)/-1.7521509702542732});
\draw [line width=1.pt,domain=-5.749486941635399:12.754592187589372] plot(\x,{(--14.881627432625653-4.123100348761653*\x)/-1.325051171274784});
\draw [line width=.6pt,color=darkgreen,domain=-5.749486941635399:12.754592187589372] plot(\x,{(-0.48246428175059086-0.7220615187347782*\x)/0.6918288539533645});
\draw [line width=.6pt,color=red,domain=-5.749486941635399:12.754592187589372] plot(\x,{(-4.167674858116997--0.6918288539533645*\x)/0.7220615187347782});
\draw [line width=.6pt,color=darkgreen,domain=-5.749486941635399:12.754592187589372] plot(\x,{(-6.142254005344644--0.918775192629696*\x)/0.3947811360846225});
\draw [line width=.6pt,color=red,domain=-5.749486941635399:12.754592187589372] plot(\x,{(--0.07903021151467993--0.3947811360846225*\x)/-0.918775192629696});
\draw [line width=.6pt,color=darkgreen,domain=-5.749486941635399:12.754592187589372] plot(\x,{(-0.42991034172486625-0.08987895376855971*\x)/-0.9959526965019316});
\draw [line width=.6pt,color=red,domain=-5.749486941635399:12.754592187589372] plot(\x,{(--3.9144829924369327-0.9959526965019316*\x)/0.08987895376855971});
\draw [line width=.6pt,dash pattern=on 3pt off 3pt] (4.097020315042089,-1.846432685228709) circle (1.0292392870658316);
\draw [line width=.6pt,dash pattern=on 3pt off 3pt] (7.147908089166828,1.0767146333148387) circle (3.039450461819348);
\draw [line width=.6pt,dash pattern=on 3pt off 3pt] (-0.9956688822251079,0.3418040490137894) circle (4.4887401430557725);
\draw [line width=.6pt,dash pattern=on 3pt off 3pt] (4.408556991386304,-5.2985844944635865) circle (2.382059135433858);
\begin{small}
\draw [fill=white] (2.534948828725216,-3.3431003487616535) circle (1.8pt);
\draw[color=black] (2.75,-4.) node {$A_1$};
\draw [fill=white] (5.612150970254273,-2.497457012158247) circle (1.8pt);
\draw[color=black] (6.53,-2.55) node {$A_2$};
\draw[color=black] (-3.512601426646834,-5.367473452286939) node {$a_3$};
\draw [fill=white] (3.86,0.78) circle (1.8pt);
\draw[color=black] (4.45,1.13) node {$A_3$};
\draw[color=black] (2.3196969318050242,4.37) node {$a_1$};
\draw[color=black] (5.364671978386316,4.37) node {$a_2$};
\draw[color=darkgreen] (1,-2.1) node {$h_1$};
\draw[color=red] (-0.48,-6.679155010814265) node {$g_1$};
\draw[color=darkgreen] (3.5,-6.5) node {$h_2$};
\draw[color=red] (8.8,-3.5) node {$g_2$};
\draw[color=darkgreen] (1,.83) node {$h_3$};
\draw[color=red] (3.9,4.37) node {$g_3$};
\draw [fill=white] (4.097020315042089,-1.846432685228709) circle (1.8pt);
\draw[color=black] (4.25,-1.44) node {$I$};
\draw [fill=white] (-0.9956688822251079,0.3418040490137894) circle (1.8pt);
\draw[color=black] (-0.7955467696973741,0.6756308709282398) node {$I_2$};
\draw [fill=white] (7.147908089166828,1.0767146333148387) circle (1.8pt);
\draw[color=black] (7.355617201151006,0.605362216007133) node {$I_1$};
\draw [fill=white] (4.408556991386304,-5.2985844944635865) circle (1.8pt);
\draw[color=black] (4.82,-5.1566674875236185) node {$I_3$};
\end{small}
\end{tikzpicture}
\caption{Incircle and excircles in triangle $A_1A_2A_3$.}\label{fig:affine-triangle}
\end{center}
\end{figure}
In Section~\ref{sec:projective-triangles} we will replace the 
angle bisectors in the triangle by lines $g_i, h_i$ which form harmonic pencils together
with the sides of the triangle. It will turn out that in this purely projective setting
(\ref{i}) occurs if and only if one of the concurrencies in (\ref{ii}) occurs (see Theorem~\ref{thm:projective-triangle}).

The configuration in the next section is less well-known.
\section{Harmonic pencils in affine \\ complete quadrilaterals}\label{sec:affine-quadrilaterals}
This extremely rich configuration has first been described by Jakob Steiner in~\cite{steiner}.
We present the theorem translated from the French original, with a few
additional remarks in parentheses.
\begin{theorem}[Jakob Steiner, 1827]\label{thm:steiner}
For a complete quadrilateral with four sides and six vertices there holds:
\begin{enumerate}
\item The four sides, taken in groups of three, form four triangles. Their circumcircles
pass through a single point $M$ (the Miquel point, see~\cite{moss}, \cite{miquel}).\label{steiner1}

\item The centers of these four circumcircles and $M$ lie on a further circle.

\item The feet of the perpendiculars from $M$ to the four sides 
of the quadrilateral 
lie on a line $s$ (the Simson-Wallace line), and $M$ is the only point with this property.

\item The orthocenters of the four triangles in~(\ref{steiner1}) lie on a line $c$.
($c$ is the common radical axis of the three Thales circles over the diagonals of the 
complete quadrilateral: Bodenmiller-Steiner Theorem, see~\cite[p.~138]{gudermann}).

\item The lines $s$ and $c$ are parallel. $c$ is the middle line between $M$ and $s$.

\item The midpoints of the three diagonals of the complete quadrilateral
lie on a line $n$ (Gau\ss-Newton Theorem, see~\cite{newton}, \cite{gauss}, \cite[p.~112--121]{werke}).

\item The line $n$ is perpendicular to $c$ and $s$.

\item Each of the four triangles of the quadrilateral has an incircle and three excircles
which gives a total of sixteen circles. The centers $C_k$ of these circles lie in groups of four on
eight  new circles.\label{8}

\item The eight circles mentioned in~(\ref{8}) can be decomposed into two
groups of four circles. The four circles in on group are orthogonal
to the four circles of the other group. In particular, the four centers of
the circles of both groups lie each on a line, and these two lines are orthogonal.
(And the four  circles in one of the two groups meet in two points on 
the line which carries the centers of the other group.)\label{9}

\item The two perpendicular lines mentioned in~(\ref{9}) meet in $M$.
\end{enumerate}
\end{theorem}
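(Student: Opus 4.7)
The plan is to label the four sides of the complete quadrilateral $\ell_1,\ldots,\ell_4$, the six vertices $P_{ij}=\ell_i\times\ell_j$, and the four component triangles $T_k$ obtained by omitting $\ell_k$; write $\omega_k,O_k,H_k$ for the circumcircle, circumcenter and orthocenter of $T_k$. The overall proof is a synthesis of classical subresults whose main ingredients are Miquel's theorem, Simson's theorem, coaxial pencils of circles, and, for the last three claims, the harmonic pencil of angle bisectors at each vertex.

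For (i)--(iii) I apply Miquel's theorem to $T_1$ with the three points where $\ell_1$ meets the sides of $T_1$: the three Miquel circles so produced are precisely $\omega_2,\omega_3,\omega_4$, and they pass through a common point $M$. Repeating the argument with $\ell_1$ replaced by $\ell_k$ shows $\omega_k$ also contains $M$, giving~(i). For~(ii), a standard inscribed-angle computation at $M$ yields $\angle O_jMO_k\equiv\angle(\ell_j,\ell_k)\pmod{\pi}$, from which the concyclicity of $\{M,O_1,O_2,O_3,O_4\}$ follows. For~(iii), since $M\in\omega_k$, the Simson--Wallace theorem provides, for each $k$, a line carrying three of the four feet $F_1,\ldots,F_4$ of the perpendiculars from $M$ onto $\ell_1,\ldots,\ell_4$; any two such Simson lines share two feet and hence coincide, so all four $F_j$ lie on one line $s$. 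Uniqueness of $M$ is immediate from the converse of Simson's theorem: any point with this property lies on every $\omega_k$ and must coincide with $M$.

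For (iv)--(vii), the central tool is Bodenmiller's observation that the three Thales circles $K_1,K_2,K_3$ erected on the diagonals of the complete quadrilateral share a common radical axis~$c$. Each orthocenter $H_k$ can be shown to have equal power with respect to every pair of these circles---the powers reduce to a common expression in terms of the altitude feet of $T_k$---so $H_k\in c$, proving~(iv). Statement~(v) follows from the classical nine-point identity $\overrightarrow{MH_k}=2\,\overrightarrow{MF_k^{\perp}}$, where $F_k^{\perp}$ is the projection of $M$ onto $s$ inside $T_k$; this both shows $c\parallel s$ and identifies $c$ with the midline between $M$ and $s$. Because $K_1,K_2,K_3$ are coaxial, their centers---which are precisely the midpoints of the three diagonals---are collinear on a single line, yielding the Gauss--Newton line~$n$ in~(vi); perpendicularity of $n$ to the radical axis $c$, and hence to the parallel line $s$, is automatic and establishes~(vii).

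The main obstacle is the tritangent-centers assertion (viii)--(x), where the sixteen incenters and excenters of the four triangles $T_k$ must be organised into eight concyclic tetrads forming two orthogonal families meeting at~$M$. Here I would exploit Corollary~\ref{cor:two-pencils}: at every vertex $P_{ij}$ the internal and external bisectors of $\angle(\ell_i,\ell_j)$ together with $\ell_i,\ell_j$ form a harmonic pencil, so that the bisectors at adjacent vertices communicate harmonically. Encoding each tritangent center by its sign-pattern of internal versus external bisector selections, I would group the sixteen centers into eight tetrads whose shared sign-data makes them concyclic, with the concyclicity deduced by applying a Miquel-type argument to the auxiliary quadrilaterals built from the selected bisectors and using the harmonic communication to land all four points on a common circle. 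The decomposition into two families of four circles in~(ix) reflects a global parity of the bisector selection, and the orthogonality between the two families is inherited from the perpendicularity of internal and external bisectors at each vertex. Finally, for~(x), tracing $M$ through the construction confirms that the two perpendicular lines of circle-centers both pass through the Miquel point, which is the unique common point of the four $\omega_k$. The step I expect to require the most care is the combinatorial bookkeeping that matches each tetrad of tritangent centers with its circle and cleanly separates the two orthogonal groups without conflation.
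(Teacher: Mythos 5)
First, a point of comparison: the paper does not prove Theorem~\ref{thm:steiner} at all. It is quoted as a historical result of Steiner (1827), with the individual parts attributed to the literature (Miquel, Gudermann, Newton, Gau\ss); the authors only prove their own additions in Theorem~\ref{steiner-add}. So your proposal is an independent reconstruction and has to be judged on its own. For parts (i)--(vii) your outline follows the standard classical arguments and is essentially sound: Miquel's pivot theorem for (i), the Simson--Wallace line and its converse for (iii), equal powers of the four orthocenters with respect to the three Thales circles for (iv), and the line of centers of that coaxial pencil for (vi)--(vii). One local error: in (v) you write $\overrightarrow{MH_k}=2\,\overrightarrow{MF_k^{\perp}}$ with $F_k^{\perp}$ ``the projection of $M$ onto $s$''; that projection is a single point independent of $k$, so your identity would force all four orthocenters to coincide. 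The correct statement is that the Simson line of $M$ bisects each segment $MH_k$, so the (varying) midpoint of $MH_k$ lies on $s$ and the homothety with center $M$ and ratio $2$ carries $s$ onto the orthocenter line.

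The genuine gap is in (viii)--(x). What you describe --- encoding the sixteen tritangent centers by sign-patterns of bisector choices and invoking Corollary~\ref{cor:two-pencils} together with an unspecified ``Miquel-type argument'' --- is not a proof. Corollary~\ref{cor:two-pencils} is a purely projective statement about collinearity of intersection points of harmonic pencils; it is exactly what underlies the authors' own addition, Theorem~\ref{steiner-add}\,(\ref{11}), namely that the sixteen centers lie in groups of four on the twelve angle bisectors. It cannot by itself produce the metric conclusions of (viii)--(x): that specified tetrads of centers are \emph{concyclic}, that the eight resulting circles split into two mutually \emph{orthogonal} quadruples, and that the two lines of centers meet in $M$. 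Concyclicity and orthogonality of circles are not projective invariants, so some genuinely metric input (the perpendicularity of the internal and external bisectors at each vertex, power-of-a-point computations, or an inversion centered at $M$) is indispensable here; your sketch neither identifies which tetrads are concyclic nor gives a reason why. As it stands, parts (viii)--(x) remain unproved.
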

We add the following observations to complete the picture and to focus on
the part of the constellation which is relevant for our purposes:
\begin{theorem}\label{steiner-add}
Let $A_1,A_2,A_3,A_4$ be the vertices of a quadrilateral with sides
$a_{12}=A_1\times A_2$, $a_{23}=A_2\times A_3$, $a_{34}=A_3\times A_4$, $a_{41}=A_4\times A_1$,
and  let $A_5=a_{12}\times a_{34}$, $A_6=a_{23}\times a_{41}$.
Denote by $g_i$ and $h_i$ the inner and outer angle bisectors in the points
$A_i$, $i\in\{1,\ldots,6\}$ (see Figure~\ref{fig:steiner-add}).
Then there holds:
\begin{enumerate}\setcounter{enumi}{10}
\item The following quintuples of points are collinear:
\begin{alignat*}{6}
&A_5,   \quad& &h_1\times h_4, \quad  & &g_1\times g_4,\quad   & &   h_3\times h_2,  \quad & &  g_3\times g_2 && \text{ (on $g_5$)} \\
&A_6,            & &h_3\times h_4,            & &g_3\times g_4,            & &   h_1\times h_2,            & &  g_1\times g_2 && \text{ (on $g_6$)} \\
&A_5,            & &g_1\times h_4,            & &h_1\times g_4,            & &   g_3\times h_2,             & & h_3\times g_2 && \text{ (on $h_5$)} \\
&A_6,            & &g_3\times h_4,            & &h_3\times g_4,            & &   g_1\times h_2,             & & h_1\times g_2 && \text{ (on $h_6$)} 
\end{alignat*}
Stated differently:
The sixteen centers $C_k$ of the circles mentioned in~(\ref{8}) lie in groups of four
on the angle bisectors of the complete quadrilateral in the points $A_1,A_2,\ldots,A_6$:
In each center $C_k$, three of these angle bisectors meet.\label{11}

\item If $A_i,A_j$ lie on a side of the complete quadrilateral,
then there are four of the centers $C_k$ mentioned in~(\ref{8}), say $C_{k_1},C_{k_2},C_{k_3},C_{k_4}$, such that
$A_i,A_j, C_{k_1},C_{k_2}$ and $A_i,A_j, C_{k_3},C_{k_4}$  each lie on a circle.
This gives rise to 24 new circles: In each center $C_k$, three of theses circles meet,
together with two of the eight circles mentioned in~(\ref{8}).\label{12}

\item Each angle bisector carries two centers of the 24 new circles mentioned in~(\ref{12}).\label{13}
\end{enumerate}
\end{theorem}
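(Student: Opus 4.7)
The plan is to treat the three parts separately. Part~(\ref{11}) is the projectively interesting one and will combine the harmonic collinearities of Corollary~\ref{cor:two-pencils} with classical triangle geometry, while parts~(\ref{12}) and~(\ref{13}) are metric statements and will follow from the classical ``Fact~5'' (or Trillium) theorem on in- and excircle centers of a triangle.

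For Part~(\ref{11}), I would first apply Corollary~\ref{cor:two-pencils} to the two pairs of vertices whose shared side makes $A_5$ appear as the intersection of their remaining lines. The pair $(A_1,A_4)$ shares the line $a_{41}$ and carries the harmonic pencils $(a_{41},a_{12};g_1,h_1)$ and $(a_{41},a_{34};g_4,h_4)$; since $a_{12}\times a_{34}=A_5$, Corollary~\ref{cor:two-pencils} directly yields that each of the triples
$$
(A_5,\, g_1\times g_4,\, h_1\times h_4)\qquad\text{and}\qquad (A_5,\, g_1\times h_4,\, h_1\times g_4)
$$
is collinear. The symmetric application to the pair $(A_2,A_3)$ on $a_{23}$ produces two further collinear triples through $A_5$, involving $g_2\times g_3,\,h_2\times h_3$ and $g_2\times h_3,\,h_2\times g_3$. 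The analogous treatment of the pairs $(A_1,A_2)$ on $a_{12}$ and $(A_3,A_4)$ on $a_{34}$ yields the four collinearities through $A_6$.

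The main obstacle is to identify the resulting (a priori four distinct) lines through $A_5$ with the two bisectors $g_5$ and $h_5$ in the grouping claimed by the theorem, since the projective-harmonic argument alone cannot distinguish them. Here I would use the classical affine observation that the four intersection points $\{g_1,h_1\}\times\{g_4,h_4\}$ are precisely the centers of the incircle and of the three excircles of the triangle $A_1 A_4 A_5$, and hence lie in pairs on the bisectors $g_5, h_5$ of the quadrilateral at $A_5$: the ``same-type'' pair $(g_1\times g_4,\,h_1\times h_4)$ ends up on one of them and the ``mixed'' pair $(g_1\times h_4,\,h_1\times g_4)$ on the other. The analogous analysis for the triangle $A_2 A_3 A_5$ matches up the two triples from $(A_2,A_3)$ with those from $(A_1,A_4)$ on the same two bisectors, producing the claimed quintuples. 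The case of $A_6$ is symmetric.

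For parts~(\ref{12}) and~(\ref{13}), I would invoke the Trillium theorem: in any triangle $ABC$ with incircle center $I$ and excircle centers $I_A,I_B,I_C$, the quadruples $\{B,C,I,I_A\}$ and $\{B,C,I_B,I_C\}$ are concyclic, with circumcenters the two arc midpoints of $BC$ on the circumcircle of $ABC$, both lying on the angle bisectors at $A$. Each pair of vertices on a side of the complete quadrilateral belongs to exactly one of the four triangles, so applying Trillium to all $4\cdot 3=12$ such pairs produces the $24$ circles of~(\ref{12}); each $C_k$ appears in three of them, one per side of its triangle, which together with the two Steiner circles from~(\ref{8}) through $C_k$ gives the claim. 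Part~(\ref{13}) follows since the circumcenter of each of these 24 circles lies on the angle bisector at the opposite vertex of its underlying triangle, and a direct incidence count ($12$ bisectors, $24$ centers, each center on one bisector) shows that each of the twelve bisectors $g_i, h_i$ carries exactly two such centers.
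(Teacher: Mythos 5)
Your proposal is correct and, once unpacked, follows essentially the same route as the paper. For part~(\ref{11}) the paper's entire argument is the observation that the sixteen centers $C_k$ are by construction the in- and excircle centers of the four triangles of the complete quadrilateral and therefore lie on the angle bisectors at $A_1,\ldots,A_6$; that is exactly the ``classical affine observation'' you invoke to sort the four points $\{g_1,h_1\}\times\{g_4,h_4\}$ onto $g_5$ and $h_5$. Your preliminary application of Corollary~\ref{cor:two-pencils} is a redundant (though harmless) detour: once the four points are identified as the in/excenters of triangle $A_1A_4A_5$, their collinearity in pairs with $A_5$ is already contained in that identification, and the purely projective argument is reserved by the authors for Proposition~\ref{prop:free-quadrilateral}, where the metric input is no longer available. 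For parts~(\ref{12}) and~(\ref{13}) the paper argues directly that the two bisectors at each vertex are orthogonal, so $A_i$ and $A_j$ see the segment joining the two relevant centers under a right angle and the $24$ circles are Thales circles over these segments, their centers being the midpoints, which lie on the bisector carrying the segment; your incenter--excenter (``Trillium'') lemma is the same fact in its textbook packaging, and your incidence counts (three of the new circles through each $C_k$, two of their centers on each bisector) agree with what the paper's Thales-circle description gives. The one point where some care remains --- in your write-up as well as in the paper's --- is the matching of the ``same-type'' and ``mixed'' pairs coming from the two triangles at $A_5$ (resp.\ $A_6$) onto the specific lines $g_5$ and $h_5$, since which of $g_i,h_i$ is the internal bisector of the relevant triangle depends on the configuration; the paper simply points to the figure here, and your treatment is at least as explicit.
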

\begin{figure}
\begin{center}
\begin{tikzpicture}[line cap=round,line join=round,x=15,y=15]
\clip(-6.722483737131825,-14.450974125619656) rectangle (15.122632785534396,8.871589891333219);
\draw [line width=1pt,domain=-6.722483737131825:15.122632785534396] plot(\x,{(-5.6769440361300685--2.1891203990940795*\x)/0.9645676737448516});
\draw [line width=1pt,domain=-6.722483737131825:15.122632785534396] plot(\x,{(-2.836308217443798--1.5550951107303632*\x)/2.8217176732686307});
\draw [line width=1pt,domain=-6.722483737131825:15.122632785534396] plot(\x,{(--25.58755234505827-3.7021456726354036*\x)/2.187631533830012});
\draw [line width=1pt,domain=-6.722483737131825:15.122632785534396] plot(\x,{(--9.811128985699225-0.042069837189038894*\x)/-5.973916880843494});
\draw [line width=0.6pt,dash pattern=on 3pt off 3pt,color=blue,domain=-6.722483737131825:15.122632785534396] plot(\x,{(--1.5500482833975386--0.9675696301361109*\x)/-0.25260445530170184});
\draw [line width=0.6pt,dash pattern=on 3pt off 3pt,color=red,domain=-6.722483737131825:15.122632785534396] plot(\x,{(--1.3012248432222944-0.25260445530170184*\x)/-0.9675696301361109});
\draw [line width=0.6pt,color=darkgreen,domain=-6.722483737131825:15.122632785534396] plot(\x,{(-0.6725689431518097--0.8356909587597577*\x)/-0.5491999831092468});
\draw [line width=0.6pt,color=red,domain=-6.722483737131825:15.122632785534396] plot(\x,{(--2.391428590155873-0.5491999831092468*\x)/-0.8356909587597577});
\draw [line width=0.6pt,color=darkgreen,domain=-6.722483737131825:15.122632785534396] plot(\x,{(--7.61285204681938-0.8702813942472466*\x)/-0.49255486478875493});
\draw [line width=0.6pt,color=red,domain=-6.722483737131825:15.122632785534396] plot(\x,{(--2.485054561626597-0.49255486478875493*\x)/0.8702813942472466});
\draw [line width=0.6pt,color=darkgreen,domain=-6.722483737131825:15.122632785534396] plot(\x,{(--3.5324318008431437-0.2635438288229937*\x)/0.9646474227868527});
\draw [line width=0.6pt,color=red,domain=-6.722483737131825:15.122632785534396] plot(\x,{(-4.904141098258687--0.9646474227868527*\x)/0.2635438288229937});
\draw [line width=0.6pt,dash pattern=on 3pt off 3pt,color=blue,domain=-6.722483737131825:15.122632785534396] plot(\x,{(--3.915746593748496--0.05930666686692026*\x)/0.9982398104990283});
\draw [line width=0.6pt,dash pattern=on 3pt off 3pt,color=red,domain=-6.722483737131825:15.122632785534396] plot(\x,{(-4.678283949651871--0.9982398104990283*\x)/-0.05930666686692026});
\draw [line width=0.6pt,color=darkgreen,domain=-6.722483737131825:15.122632785534396] plot(\x,{(-1.7171784701005388--0.7377528746808143*\x)/0.67507088213031});
\draw [line width=0.6pt,color=red,domain=-6.722483737131825:15.122632785534396] plot(\x,{(-2.3303429150713364--0.67507088213031*\x)/-0.7377528746808143});
\begin{small}
\draw [fill=white] (1.8754323262551482,-1.6291203990940792) circle (1.8pt);
\draw[color=black] (1.99,-2.3) node {$A_1$};
\draw [fill=white] (7.849349207098642,-1.5870505619050403) circle (1.8pt);
\draw[color=black] (8.456530400598407,-1.2) node {$A_2$};
\draw [fill=white] (5.6617176732686305,2.1150951107303633) circle (1.8pt);
\draw[color=black] (5.43,1.6) node {$A_3$};
\draw [fill=white] (2.84,0.56) circle (1.8pt);
\draw[color=black] (2.2,0.75) node {$A_4$};
\draw[color=black] (5.82,6) node {$a_{41}$};
\draw[color=black] (-5.8,-4.66) node {$a_{34}$};
\draw[color=black] (2.8,6) node {$a_{23}$};
\draw[color=black] (-5.8,-1.4) node {$a_{12}$};
\draw [fill=white] (4.437819404590411,4.186307565416579) circle (1.8pt);
\draw[color=black] (3.78,3.82) node {$A_6$};
\draw [fill=white] (-1.1710844515128602,-1.6505747425994877) circle (1.8pt);
\draw[color=black] (-1.7,-1.3) node {$A_5$};
\draw[color=blue] (-3.3,8.38) node {$h_5$};
\draw[color=red] (-5.8,-2.55) node {$g_5$};
\draw[color=darkgreen] (-4.18,8.38) node {$h_1$};
\draw[color=red] (-5.8,-6.3) node {$g_1$};
\draw[color=darkgreen] (13.95,8.38) node {$h_2$};
\draw[color=red] (-5.8,6.58) node {$g_2$};
\draw[color=darkgreen] (-5.8,5.6) node {$h_3$};
\draw[color=red] (7.8,8.38) node {$g_3$};
\draw[color=blue] (-5.8,3.9) node {$h_6$};
\draw[color=red] (4.65,8.38) node {$g_6$};
\draw[color=darkgreen] (10.6,8.38) node {$h_4$};
\draw[color=red] (-5.8,8.03) node {$g_4$};
\draw [fill=blue] (3.829192211301657,-0.3451470778324675) circle (2.0pt);
\draw [fill=blue] (4.674052108243779,0.2100785546416566) circle (2.0pt);
\draw [fill=blue] (5.078695061870641,-0.018938010148624364) circle (1.8pt);
\draw [fill=blue] (4.757505051300725,-1.1945873027346006) circle (1.8pt);
\draw [fill=blue] (-2.7544789098391234,4.41441880065568) circle (1.8pt);
\draw [fill=blue] (-0.7839796304612746,3.8760739997335985) circle (1.8pt);
\draw [fill=blue] (-1.7064580958854416,3.8212684084664352) circle (1.8pt);
\draw [fill=blue] (-3.188299709334126,6.076114870154515) circle (1.8pt);
\draw [fill=blue] (11.34930746709747,4.596927654664364) circle (1.8pt);
\draw [fill=blue] (6.257106910434643,4.294393695532263) circle (1.8pt);
\draw [fill=blue] (9.371097691366563,1.1016842111243614) circle (1.8pt);
\draw [fill=blue] (4.5427097739007625,2.4208105662598034) circle (1.8pt);
\draw [fill=blue] (-0.7297176717322211,-3.3411747415451205) circle (1.8pt);
\draw [fill=blue] (5.072331091108399,-6.493686087412137) circle (1.8pt);
\draw [fill=blue] (1.441319287727939,-0.9685516581355607) circle (1.8pt);
\draw [fill=blue] (1.6650293055477274,-12.51395217458321) circle (1.8pt);
\end{small}
\end{tikzpicture}
\caption{Complete quadrilateral with angle bisectors.}\label{fig:steiner-add}
\end{center}
\end{figure}
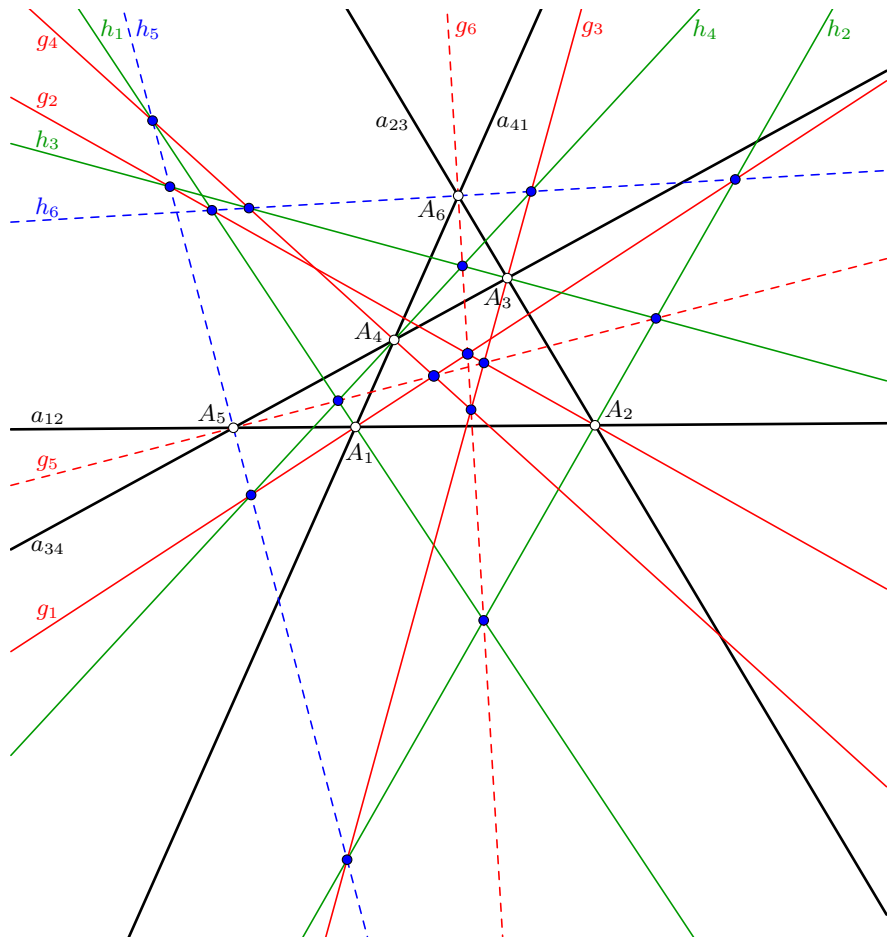
\begin{proof}
\begin{enumerate}\setcounter{enumi}{10}
\item It is clear from the construction, that the centers $C_k$ of the incircles and excircles lie on the angle
bisectors in the points $A_1,A_2,\ldots, A_6$: See Figure~\ref{fig:steiner-add}.
%
\item[(\ref{12}),(\ref{13})] Since the two angle
bisectors in each of the points $A_1,\ldots,A_6$ are orthogonal, the 24 new circles 
are just Thales circles over two centers. See Figure~\ref{fig:xii}, where the situation
is illustrated for the segment $A_2,A_5$ and two centers $C$ and $D$.
\end{enumerate}
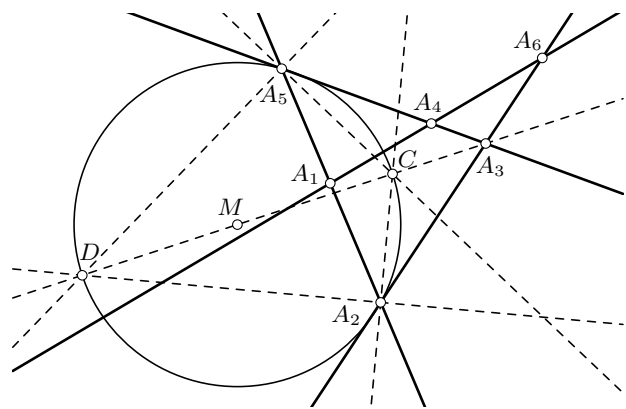
\begin{figure}
\begin{center}
\begin{tikzpicture}[line cap=round,line join=round,x=15,y=15]
\clip(-7.5194220039180495,-4.823896912084252) rectangle (7.705043944318036,5.048434376753706);
\draw [line width=1.pt,domain=-7.5194220039180495:7.705043944318036] plot(\x,{(--4.284869366273985-5.879028745038336*\x)/2.4680828222094906});
\draw [line width=1.pt,domain=-7.5194220039180495:7.705043944318036] plot(\x,{(--17.11185815961716-1.8818709934899476*\x)/5.088455228921451});
\draw [line width=1.pt,domain=-7.5194220039180495:7.705043944318036] plot(\x,{(-12.443572292679814--3.9971577515483885*\x)/2.6203724067119603});
\draw [line width=1.pt,domain=-7.5194220039180495:7.705043944318036] plot(\x,{(-2.8578021817808796-3.1525571669214374*\x)/-5.289336982406399});
\draw [line width=.6pt,dash pattern=on 3pt off 3pt,domain=-7.5194220039180495:7.705043944318036] plot(\x,{(--2.0855271630598624-0.6916565513673865*\x)/0.7222265676022822});
\draw [line width=.6pt,dash pattern=on 3pt off 3pt,domain=-7.5194220039180495:7.705043944318036] plot(\x,{(--3.116644907953579--0.7222265676022822*\x)/0.6916565513673865});
\draw [line width=.6pt,dash pattern=on 3pt off 3pt,domain=-7.5194220039180495:7.705043944318036] plot(\x,{(-2.0564191715460303-0.09127556098663005*\x)/0.9958256734823502});
\draw [line width=.6pt,dash pattern=on 3pt off 3pt,domain=-7.5194220039180495:7.705043944318036] plot(\x,{(-1.855072083384389--0.9958256734823502*\x)/0.09127556098663005});
\draw [line width=.6pt,dash pattern=on 3pt off 3pt,domain=-7.5194220039180495:7.705043944318036] plot(\x,{(-0.8609973878830894-0.7654211864405276*\x)/-2.3241573403643527});
\draw [line width=0.6pt] (-1.9146527974449943,-0.26010219594864376) circle (4.074990399900657);
\begin{small}
\draw [color=black,fill=white] (-0.8084552289214507,3.6618709934899476) circle (1.8pt);
\draw[color=black] (-1.0,3.) node {$A_5$};
\draw [color=black,fill=white] (1.65962759328804,-2.2171577515483882) circle (1.8pt);
\draw[color=black] (0.8,-2.55) node {$A_2$};
\draw [color=black,fill=white] (4.28,1.78) circle (1.8pt);
\draw[color=black] (4.4,1.2) node {$A_3$};
\draw [color=black,fill=white] (5.690881625445003,3.9321812743941766) circle (1.8pt);
\draw[color=black] (5.334020558937006,4.36) node {$A_6$};
\draw [color=black,fill=white] (0.40154464303860393,0.7796241074727392) circle (1.8pt);
\draw[color=black] (-0.22,1.) node {$A_1$};
\draw [color=black,fill=white] (2.9223749455130172,2.2820925005988104) circle (1.8pt);
\draw[color=black] (2.838206469062238,2.8) node {$A_4$};
\draw [color=black,fill=white] (1.9558426596356475,1.0145788135594724) circle (1.8pt);
\draw[color=black] (2.325178017254647,1.4) node {$C$};
\draw [color=black,fill=white] (-1.9146527974449943,-0.26010219594864376) circle (1.8pt);
\draw[color=black] (-2.0840935415241097,0.18) node {$M$};
\draw [color=black,fill=white] (-5.785148254525636,-1.5347832054567592) circle (1.8pt);
\draw[color=black] (-5.6,-0.95) node {$D$};
\end{small}
\end{tikzpicture}
\caption{The center $C$ of the incircle of the triangle $A_2A_3A_5$, the 
center $D$ of the excircle opposite of its vertex $A_3$, and the vertices $A_2,A_5$ lie
on a circle whose center $M$ is the midpoint of the segment $CD$.}\label{fig:xii}
\end{center}
\end{figure}
\end{proof}
Tow sides of the quadrilateral meeting in $A_i$ form together with the angle bisectors
in this point a harmonic pencil of lines. According to Theorem~\ref{steiner-add}\,(\ref{11}),
groups of four of the intersection points of angle bisectors and one of the points $A_i$ are collinear.
In Section~\ref{sec:projective-quadrilaterals} we will consider this situation
where we replace the angle bisectors by harmonic lines.
\section{Harmonic pencils in projective triangles}\label{sec:projective-triangles}
We now replace the angle bisectors in a triangle by lines such
that in each vertex a harmonic pencil of lines results. Each pair of 
these three harmonic pencils shares a common line (a side
of the triangle), and they are therefore not completely independent.
And indeed, certain intersection points of  lines turn out to be collinear.
\begin{proposition}\label{prop:free-triangle}
Let $A_1,A_2,A_3$ be a triangle with sides $a_i$, $i\in\{1,2,3\}$ and such that
$A_i\notin a_1$. Through each vertex $A_i$ we choose two lines $g_i, h_i$
such that  $(a_ia_{i+1}; g_{i+2}h_{i+2})$ is a harmonic pencil of lines (indices are taken cyclically). 
Then, there holds:
\begin{enumerate}
\item The following triples of points are collinear:
\begin{alignat*}{4}
&\text{on line $u_i$:\quad}&& A_i,\quad && g_{i+1}\times g_{i+2},\quad && h_{i+1}\times h_{i+2}\\
&\text{on line $v_i$:\quad}&& A_i,\quad && g_{i+1}\times h_{i+2},\quad && g_{i+2}\times h_{i+1}
\end{alignat*}
for $i\in\{1,2,3\}$ 
(see the dashed lines in Figure~\ref{fig:free-triangle}).\label{prop:5i}
\item The lines $(a_ia_{i+1}; u_{i+2}v_{i+2})$ are  harmonic pencils.\label{prop:5ii}
\end{enumerate}
\end{proposition}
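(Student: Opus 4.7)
The plan is to deduce both parts directly from Corollary~\ref{cor:two-pencils}, supplemented by a single section-projection for part~(\ref{prop:5ii}).

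For part~(\ref{prop:5i}), the key observation is that any two of the three given harmonic pencils share a common line, namely the side of the triangle that joins their vertices. Concretely, the pencils $(a_2a_3;g_1h_1)$ at $A_1$ and $(a_3a_1;g_2h_2)$ at $A_2$ both contain $a_3=A_1\times A_2$, so Corollary~\ref{cor:two-pencils} applies with common line $a=a_3$, remaining sides $a_2,a_1$ at $A_1,A_2$, and harmonic partners $(g_1,h_1)$, $(g_2,h_2)$. It yields that
\[
a_2\times a_1,\; g_1\times g_2,\; h_1\times h_2 \quad\text{and}\quad a_2\times a_1,\; g_1\times h_2,\; h_1\times g_2
\]
are collinear triples. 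Since $a_2\times a_1=A_3$, these are precisely the statements for $u_3$ and $v_3$. Cycling through the pairs $(A_2,A_3)$ and $(A_3,A_1)$ disposes of $u_1,v_1$ and $u_2,v_2$.

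For part~(\ref{prop:5ii}), fix $i=1$ and work at the vertex $A_3$; the remaining cases follow by symmetry. To show that $(a_1a_2;u_3v_3)$ is harmonic I compute the cross-ratio $(a_1,a_2;u_3,v_3)$ via one section followed by one projection. Intersecting the four concurrent lines $a_1,a_2,u_3,v_3$ through $A_3$ with the transversal $g_1$, and using that $a_2$ and $g_1$ both pass through $A_1$, that $u_3$ contains $g_1\times g_2$, and that $v_3$ contains $g_1\times h_2$, the four points cut out on $g_1$ are
\[
a_1\times g_1,\quad A_1,\quad g_1\times g_2,\quad g_1\times h_2.
\]
Projecting these from $A_2$ and using that $a_1,g_2,h_2$ all pass through $A_2$ while $A_2\times A_1=a_3$, the corresponding projecting lines at $A_2$ are $a_1,a_3,g_2,h_2$. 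This is the harmonic pencil $(a_3a_1;g_2h_2)$ given by hypothesis, so the cross-ratio of the four lines is $-1$. By invariance of the cross-ratio under section and projection, $(a_1,a_2;u_3,v_3)=-1$, as required.

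There is no substantial obstacle in the argument; the only care needed is the bookkeeping of indices and the verification that the pairing of the four elements is preserved through the two perspectivities, so that the harmonic character indeed transfers across the section-projection chain.
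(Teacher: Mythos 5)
Your proof is correct and follows essentially the same route as the paper: part~(i) is obtained exactly as in the paper by applying Corollary~\ref{cor:two-pencils} to the pairs of vertices sharing a side, and part~(ii) transfers the harmonicity of the given pencil at $A_2$ to the claimed pencil $(a_1a_2;u_3v_3)$ by sectioning with one of the harmonic lines at the third vertex (you use $g_1$, the paper uses $h_3$) and identifying the section points via part~(i). The index bookkeeping and the preservation of the pairing through the perspectivities both check out.
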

\begin{proof}
(\ref{prop:5i}): This follows directly by applying Corollary~\ref{cor:two-pencils} to the three pairs of points $A_i,A_j$, $i\neq j$.

(\ref{prop:5ii}): The harmonic pencil $(a_1  a_3;h_2g_2)$ cuts the line $h_3$ 
in harmonic points, i.e., the points 
$$
( a_1\times h_3,a_3\times h_3;  h_2\times h_3,g_2\times h_3)=
( a_2\times h_3,a_3\times h_3;  u_1\times h_3,v_1\times h_3)
$$
are harmonic.
But this implies that $(a_2a_3;u_1v_1)$ is a harmonic pencil. The remaining cases are analogous.
\end{proof}
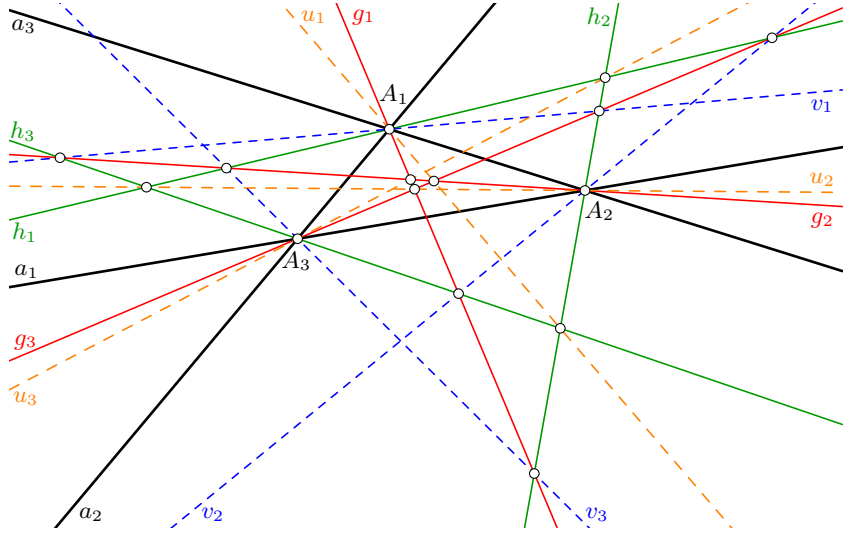
\begin{figure}[h]
\begin{center}
\begin{tikzpicture}[line cap=round,line join=round,x=12,y=12]
\clip(-9.350402562316095,-10.451233605117318) rectangle (16.62751613874593,6.011177796454527);
\draw [line width=1.pt,domain=-9.350402562316095:16.62751613874593] plot(\x,{(--2.6881058556047908-3.444263378343046*\x)/-2.860489924386605});
\draw [line width=1.pt,domain=-9.350402562316095:16.62751613874593] plot(\x,{(-11.76147824023111--1.517810980286766*\x)/8.960922518231506});
\draw [line width=1.pt,domain=-9.350402562316095:16.62751613874593] plot(\x,{(-17.448721864854353--1.9264523980562798*\x)/-6.100432593844902});
\draw [line width=.6pt,color=red,domain=-9.350402562316095:16.62751613874593] plot(\x,{(-7.138942756006888--2.499264643951288*\x)/5.85298591662718});
\draw [line width=.6pt,color=red,domain=-9.350402562316095:16.62751613874593] plot(\x,{(-5.013777787476932--0.4602103886426474*\x)/-7.3181590084426675});
\draw [line width=.6pt,color=red,domain=-9.350402562316095:16.62751613874593] plot(\x,{(--9.30094554960268-2.7642754501877045*\x)/1.1540541899151329});
\draw [line width=.6pt,color=darkgreen,domain=-9.350402562316095:16.62751613874593] plot(\x,{(--232.51535198083815--53.38250532109403*\x)/-155.27727622765858});
\draw [line width=.6pt,color=darkgreen,domain=-9.350402562316095:16.62751613874593] plot(\x,{(-101.13561095310841--11.794227568961304*\x)/2.094838310805054});
\draw [line width=.6pt,color=darkgreen,domain=-9.350402562316095:16.62751613874593] plot(\x,{(-60.90382758467995-9.983322098713925*\x)/-41.46608660042247});
\draw [line width=.6pt,dash pattern=on 3pt off 3pt,color=blue,domain=-9.350402562316095:16.62751613874593] plot(\x,{(--31.14530026355007--1.4685530218266598*\x)/16.81405177178579});
\draw [line width=.6pt,dash pattern=on 4pt off 4pt,color=orange,domain=-9.350402562316095:16.62751613874593] plot(\x,{(-19.74039867713516--4.630181005672508*\x)/-3.942627757868677});
\draw [line width=.6pt,dash pattern=on 4pt off 4pt,color=orange,domain=-9.350402562316095:16.62751613874593] plot(\x,{(-1.109963173450977--0.039946143236323534*\x)/-5.312837259404464});
\draw [line width=.6pt,dash pattern=on 3pt off 3pt,color=blue,domain=-9.350402562316095:16.62751613874593] plot(\x,{(--67.76491479301258-8.043038713238316*\x)/-9.776427468424014});
\draw [line width=.6pt,dash pattern=on 4pt off 4pt,color=orange,domain=-9.350402562316095:16.62751613874593] plot(\x,{(-4.167533516023573--1.8598456341652405*\x)/3.5219666727041856});
\draw [line width=.6pt,dash pattern=on 3pt off 3pt,color=blue,domain=-9.350402562316095:16.62751613874593] plot(\x,{(-3.850194802261763-2.220967689938769*\x)/2.220512616824573});
\begin{small}
\draw [color=black,fill=white] (2.500198552999839,2.070706982248501) circle (1.8pt);
\draw[color=black] (2.65,3.16) node {$A_1$};
\draw [color=black,fill=white] (-0.3602913713867657,-1.3735563960945447) circle (1.8pt);
\draw[color=black] (-0.4,-2.074084540842284) node {$A_3$};
\draw[color=black] (-6.796393701256626,-10.042592187347804) node {$a_2$};
\draw[color=blue] (-3.,-10.042592187347804) node {$v_2$};
\draw[color=blue] (8.98,-10.042592187347804) node {$v_3$};
\draw [color=black,fill=white] (8.60063114684474,0.14425458419222129) circle (1.8pt);
\draw[color=black] (9.05,-0.5) node {$A_2$};
\draw[color=orange] (0.15,5.6) node {$u_1$};

\draw[color=black] (-8.810412117406377,-2.4) node {$a_1$};
\draw[color=black] (-8.9,5.3) node {$a_3$};
\draw[color=red] (-8.8396007901042,-4.613499065552835) node {$g_3$};
\draw[color=orange] (-8.8396007901042,-6.4) node {$u_3$};
\draw[color=red] (16,-.8) node {$g_2$};
\draw[color=orange] (16,.5) node {$u_2$};
\draw[color=blue] (16,2.8) node {$v_1$};

\draw[color=red] (1.7,5.6) node {$g_1$};
\draw[color=darkgreen] (-8.927166808197669,2.0123296368528556) node {$h_3$};
\draw[color=darkgreen] (9.05305557366099,5.6) node {$h_2$};
\draw[color=darkgreen] (-8.868789462802022,-1.1692356872097887) node {$h_1$};
\draw [color=black,fill=white] (-7.769733197672582,1.1737227507790087) circle (1.8pt);
\draw [color=black,fill=white] (9.044318574113207,2.6422757726056685) circle (1.8pt);
\draw [color=black,fill=white] (7.830874169240506,-4.189582982092903) circle (1.8pt);
\draw [color=black,fill=white] (3.8882464113718287,0.44059802357960526) circle (1.8pt);
\draw [color=black,fill=white] (3.287793887440276,0.18420072742854482) circle (1.8pt);
\draw [color=black,fill=white] (14.434899505356684,4.944090354722148) circle (1.8pt);
\draw [color=black,fill=white] (4.658472036932669,-3.0989483585161675) circle (1.8pt);
\draw [color=black,fill=white] (3.16167530131742,0.4862892380706958) circle (1.8pt);
\draw [color=black,fill=white] (9.230608172898167,3.6911118142550787) circle (1.8pt);
\draw [color=black,fill=white] (-5.074327245512705,0.24707381618072521) circle (1.8pt);
\draw [color=black,fill=white] (-2.5808039882113385,0.8474112938442243) circle (1.8pt);
\draw [color=black,fill=white] (7.0199329897594716,-8.755293264604184) circle (1.8pt);
\end{small}
\end{tikzpicture}
\caption{Harmonic pencils in a triangle.}\label{fig:free-triangle}
\end{center}
\end{figure}
If we compare the situation in Section~\ref{sec:affine-triangles} of a triangle with angle bisectors 
to the Proposition~\ref{prop:free-triangle} above, we observe that
here neither the lines $g_1,g_2,g_3$ nor the lines $g_i,h_{i+1}, h_{i+2}$ are concurrent in general.
However, the next theorem shows that one such concurrency already implies all other possible concurrencies.
\begin{theorem}\label{thm:projective-triangle}
Let $A_1,A_2,A_3$ be a triangle with sides $a_i$, $i\in\{1,2,3\}$ and such that
$A_i\notin a_1$. Through each vertex $A_i$ we choose two lines $g_i, h_i$
such that  $(a_ia_{i+1}; g_{i+2}h_{i+2})$ is a harmonic pencil of lines (indices are taken cyclically). Then we have:
\begin{enumerate}
\item If the lines $g_1,g_2,g_3$ are concurrent, then so are the triples of lines $g_i,h_{i+1},h_{i+2}$
for $i\in\{1,2,3\}$. \label{pt1}
\item Vice versa, if one of the triples  $g_i,h_{i+1},h_{i+2}$ is concurrent,
then so are the other two triples and the triple $g_1,g_2,g_3$. \label{pt2}
\item Each of the concurrencies in (\ref{pt1}) and (\ref{pt2}) occurs if and only if
$$
\frac{A_1 B_3}{B_3A_2}\cdot
\frac{A_2 B_1}{B_1A_3}\cdot
\frac{A_3 B_2}{B_2A_1}
=1
$$
where $B_i=a_i\times g_i$ (see Figure~\ref{fig:tritra}).\label{pt3}
\end{enumerate}
\end{theorem}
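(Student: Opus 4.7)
The plan is to reduce all three parts of the theorem to a single application of Ceva's theorem, once the harmonic pencils are unwound on each side of the triangle.

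As a first step, introduce $C_j:=a_j\times h_j$ as the analogue of $B_j$ for the $h$-lines. Cutting the harmonic pencil $(a_i a_{i+1};g_{i+2} h_{i+2})$ at the vertex $A_{i+2}$ by the opposite side $a_{i+2}$ yields four harmonic collinear points, namely $A_{i+1},A_i,B_{i+2},C_{i+2}$. Reindexing with $j:=i+2$, the outcome is that on each side $a_j$ the points $B_j$ and $C_j$ are harmonic conjugates with respect to the two vertices $A_{j+1},A_{j+2}$ lying on $a_j$. Setting $\mu_j:=\frac{A_{j+1}B_j}{B_j A_{j+2}}$, the relation $\operatorname{CR}(A_{j+1},A_{j+2};B_j,C_j)=-1$ expands, in signed lengths, to
$$\frac{A_{j+1}C_j}{C_j A_{j+2}}=-\mu_j.$$

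Next, pass to an affine patch chosen so that no relevant point lies at infinity (this is harmless, since concurrency is projectively invariant) and apply the classical Ceva theorem to each of the four candidate triples. The Ceva product for $(g_1,g_2,g_3)$ is exactly $\mu_1\mu_2\mu_3$, which is the product appearing in statement (\ref{pt3}). For any mixed triple $(g_k,h_{k+1},h_{k+2})$ the cevian from $A_k$ contributes $\mu_k$, while the two cevians from $A_{k+1}$ and $A_{k+2}$ each introduce one sign flip via the harmonic relation above, so the Ceva product becomes
$$\mu_k\cdot(-\mu_{k+1})\cdot(-\mu_{k+2})=\mu_1\mu_2\mu_3.$$
All four Ceva products therefore coincide, so the four concurrency conditions are simultaneously equivalent to $\mu_1\mu_2\mu_3=1$. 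This proves (\ref{pt1}), (\ref{pt2}), and (\ref{pt3}) in one stroke.

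The argument is essentially bookkeeping; the only points requiring care are the cyclic index convention and the observation that each harmonic conjugation contributes precisely one sign flip, so that the two substitutions in each mixed triple cancel. There is no genuine obstacle, and no projective machinery beyond harmonic conjugation and Ceva's theorem is needed. The conceptual content of the theorem is exactly the cancellation of these two signs, which forces the four a priori different concurrency conditions to reduce to the single scalar equation $\mu_1\mu_2\mu_3=1$.
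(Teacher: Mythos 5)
Your proof is correct. The key computations check out: cutting the harmonic pencil $(a_{j+1}a_{j+2};g_jh_j)$ at $A_j$ with the opposite side $a_j$ does give $\operatorname{CR}(A_{j+1},A_{j+2};B_j,C_j)=-1$, hence $\frac{A_{j+1}C_j}{C_jA_{j+2}}=-\mu_j$; the four lines of a harmonic pencil are distinct, so $B_j$ and $C_j$ avoid the vertices and all ratios $\mu_j$ are finite and nonzero; and the sign bookkeeping $\mu_k(-\mu_{k+1})(-\mu_{k+2})=\mu_1\mu_2\mu_3$ is right. Your route differs from the paper's in its decomposition. The paper proves (i) directly: it cuts the pencils at $A_2$ and $A_3$ with the \emph{cevian} $g_1$ rather than with a side, concluding that $h_2$ and $h_3$ both meet $g_1$ at the harmonic conjugate of the concurrency point $I$ with respect to $A_1$ and $B_1$, hence coincide there; (ii) then follows by swapping the roles of $g$ and $h$ at two vertices, and (iii) is disposed of with a one-line appeal to Ceva for the triple $g_1,g_2,g_3$ only. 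Your version instead pushes the harmonic relation onto the sides of the triangle and verifies that all four Ceva products are literally the same scalar $\mu_1\mu_2\mu_3$. What this buys is a single unified argument that delivers (i), (ii) and (iii) simultaneously and makes the ``if and only if'' in (iii) explicit for every one of the four triples, rather than only for $g_1,g_2,g_3$ plus a transfer via (i) and (ii); the cost is that you must invoke the converse of Ceva and fix an affine chart avoiding the relevant points at infinity, which you do address. The paper's proof of (i) is slightly more economical and isolates the geometric content (uniqueness of the harmonic conjugate on the cevian), but both arguments ultimately rest on the same harmonic-conjugate sign flip.
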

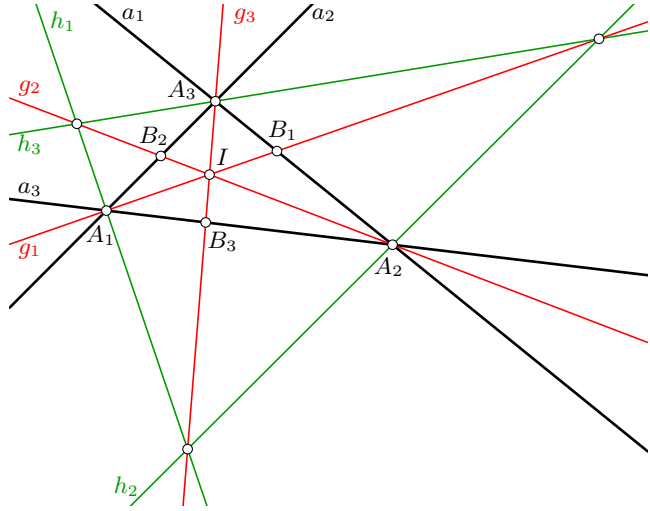
\begin{figure}[h]
\begin{center}
\begin{tikzpicture}[line cap=round,line join=round,x=15,y=15]
\clip(-3.9123977116524986,-8.505459862669417) rectangle (12.005976384406718,4.113136109830486);
\draw [line width=1.pt,domain=-3.9123977116524986:12.005976384406718] plot(\x,{(-9.0040312-0.85904091*\x)/7.138671121271159});
\draw [line width=1.pt,domain=-3.9123977116524986:12.005976384406718] plot(\x,{(-11.780293475509337--3.6088560*\x)/-4.419409519449142});
\draw [line width=1.pt,domain=-3.912397711:12.005976384] plot(\x,{(-1.1816579515-2.74981510*\x)/-2.719261601822016});
\draw [line width=.6pt,color=red,domain=-3.9123977116524986:12.005976384406718] plot(\x,{(-2.35825106369892--1.4956022806466225*\x)/4.25517216690381});
\draw [line width=.6pt,color=red,domain=-3.9123977116524986:12.005976384406718] plot(\x,{(-1.3781438798811947--2.232627129929268*\x)/-5.780346975731895});
\draw [line width=.6pt,color=red,domain=-3.9123977116524986:12.005976384406718] plot(\x,{(--2.008579121725494-1.8454272455083909*\x)/-0.1461670773365933});
\draw [line width=.6pt,color=darkgreen,domain=-3.9123977116524986:12.005976384406718] plot(\x,{(--12.790772221602968--6.862999908092251*\x)/-2.317749570368075});
\draw [line width=.6pt,color=darkgreen,domain=-3.9123977116524986:12.005976384406718] plot(\x,{(-15.894442082577685-1.7747394271968706*\x)/-10.822187135841213});
\draw [line width=.6pt,color=darkgreen,domain=-3.9123977116:12.005976] plot(\x,{(-7.6113 + 1.00555*\x});
\begin{small}
\draw [fill=white] (-1.4986711212711588,-1.0809590846609975) circle (1.8pt);
\draw[color=black] (-1.64,-1.7) node {$A_1$};
\draw [fill=white] (5.64,-1.94) circle (1.8pt);
\draw[color=black] (5.543910892436422,-2.5) node {$A_2$};
\draw[color=black] (-3.4082649427753835,-0.55) node {$a_3$};
\draw [fill=white] (1.2205904805508574,1.668856018305084) circle (1.8pt);
\draw[color=black] (0.39,1.95) node {$A_3$};
\draw[color=black] (-0.8112173455296376,3.8381545995338775) node {$a_1$};
\draw[color=black] (3.924575331800839,3.8381545995338775) node {$a_2$};
\draw [fill=white] (2.756501045632651,0.41464319598562493) circle (1.8pt);
\draw[color=black] (2.9,0.9355719908474592) node {$B_1$};
\draw[color=red] (-3.4082649427753835,-2.1) node {$g_1$};
\draw [fill=white] (-0.14034697573189514,0.292627129929268) circle (1.8pt);
\draw[color=black] (-0.32,.8) node {$B_2$};
\draw[color=red] (-3.4082649427753835,2) node {$g_2$};
\draw [fill=white] (1.074423403214264,-0.17657122720330698) circle (1.8pt);
\draw[color=black] (1.4,0.2786717162500067) node {$I$};
\draw[color=red] (1.9691512585805135,3.80760109838981) node {$g_3$};
\draw [fill=white] (0.9791744323209726,-1.3791337275695272) circle (1.8pt);
\draw[color=black] (1.4,-1.85) node {$B_3$};
\draw[color=darkgreen] (-2.5833204118855586,3.6853870938135396) node {$h_1$};
\draw[color=darkgreen] (-3.4082649427753835,0.58) node {$h_3$};
\draw[color=black] (-3.820737208220296,4.37284086955506) node {$J$};
\draw[color=darkgreen] (-1,-8.016603844364337) node {$h_2$};
\draw [fill=white] (-2.2359006513716055,1.10202317623907) circle (1.8pt);
\draw [fill=white] (0.5275741714859218,-7.080796787470197) circle (1.8pt);
\draw [fill=white] (10.789491340929208,3.2380679916971404) circle (1.8pt);
\end{small}
\end{tikzpicture}
\caption{Harmonic pencils in the vertices of a triangle.}\label{fig:tritra}
\end{center}
\end{figure}
\begin{proof}
(\ref{pt1}) Let $I$ be the intersection point of $g_1,g_2,g_3$, $I_{12}=g_1\times h_2$ and 
$I_{13}=g_1\times h_3$. We want to show that $I_{12}=I_{13}$. Since $(a_ia_{i+1}; g_{i+2}h_{i+2})$ are harmonic pencils
for $i\in\{2,3\}$, we have
\begin{eqnarray*}
\frac{A_1 I}{IB_1} &=&-\frac{A_1 I_{12}}{I_{12}B_1}\\
\frac{A_1 I}{IB_1} &=&-\frac{A_1 I_{13}}{I_{13}B_1}.
\end{eqnarray*}
Hence $\frac{A_1 I_{12}}{I_{12}B_1}=\frac{A_1 I_{13}}{I_{13}B_1}$ and therefore $I_{12}=I_{13}$.
Thus, $g_1,h_2,h_3$ are concurrent. The remaining cases are analogous.

(\ref{pt2}) This follows from (\ref{pt1}) by exchanging the role of $g_i$ and $h_i$
in two of the vertices $A_i$.

(\ref{pt3}) This is Ceva's Theorem.
\end{proof}
\section{Harmonic pencils in projective \\ complete quadrilaterals}\label{sec:projective-quadrilaterals}
We consider a quadrilateral like in Section~\ref{sec:affine-quadrilaterals} and
replace the angle bisectors by lines which form together with the sides of the quadrilateral
harmonic pencils. Since neighboring pencils share a common side they are not
completely independent, and certain intersection points of the lines turn out to
be collinear:
\begin{proposition}\label{prop:free-quadrilateral}
Let  $A_1, A_2, A_3, A_4$ be the vertices of a quadrilateral with sides
$a_{i(i+1)}=A_i\times A_{i+1}$ for $i\in\{1,2,3,4\}$ (where indices are taken cyclically).
The quadrilateral is completed with $A_5=a_{12}\times a_{34}$, $A_6=a_{41}\times a_{23}$.
%
%
We assume that in each vertex $A_1,A_2,A_3,A_4$ two lines $g_i, h_i$ 
build together with the sides a harmonic pencil $(a_{(i-1)i} a_{i(i+1)};g_i h_i)$.
Then the following triples of points are collinear:
\begin{alignat*}{6}
&A_5,\quad && g_1\times h_4,\quad && h_1\times g_4,\qquad\qquad  &&A_6,\quad && g_1\times h_2,\quad && h_1\times g_2,\\
&A_5,\quad && g_3\times h_2,\quad && h_3\times g_2,\qquad\qquad  &&A_6,\quad && g_3\times h_4,\quad && h_3\times g_4,\\
&A_5,\quad && g_1\times g_4,\quad && h_1\times h_4,\qquad\qquad  &&A_6,\quad && g_1\times g_2,\quad && h_1\times h_2,\\
&A_5,\quad && g_2\times g_3,\quad && h_2\times h_3,\qquad\qquad  &&A_6,\quad && g_3\times g_4,\quad && h_3\times h_4\\
\end{alignat*}
(see the dashed lines in Figure~\ref{fig:free-quadrilateral}).
\end{proposition}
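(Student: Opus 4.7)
The plan is to reduce the proposition to four applications of Corollary~\ref{cor:two-pencils}, one for each side of the quadrilateral. The key observation is that every side $a_{(i-1)i}$ appears in the harmonic pencils at both of its endpoints $A_{i-1}$ and $A_i$, so these two pencils are in exactly the situation covered by the corollary: a common line $a=A_{i-1}\times A_i$ together with two further lines at each vertex forming a harmonic quadruple with the two sides there.

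\emph{First}, I would take the pair $A_1,A_4$ whose common side is $a_{41}$. The pencils are $(a_{41}\,a_{12};g_1h_1)$ at $A_1$ and $(a_{34}\,a_{41};g_4h_4)$ at $A_4$. Invoking Corollary~\ref{cor:two-pencils} with $a=a_{41}$, the intersection of the two remaining sides is $a_{12}\times a_{34}=A_5$, and the two collinear triples handed to us by the corollary are precisely $(A_5,g_1\times g_4,h_1\times h_4)$ and $(A_5,g_1\times h_4,h_1\times g_4)$. \emph{Second}, take $A_2,A_3$ sharing $a_{23}$; the remaining sides $a_{12}$ and $a_{34}$ again meet in $A_5$, so the corollary yields $(A_5,g_2\times g_3,h_2\times h_3)$ and $(A_5,g_3\times h_2,h_3\times g_2)$. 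Together these four triples exhaust the left column of the statement.

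\emph{Third} and \emph{fourth}, I treat the pairs $A_1,A_2$ (common side $a_{12}$, with remaining sides $a_{41}$ and $a_{23}$ meeting in $A_6$) and $A_3,A_4$ (common side $a_{34}$, again $a_{23}\times a_{41}=A_6$) in exactly the same way. Each gives two collinear triples through $A_6$, and together they fill the right column.

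I do not expect any genuine obstacle: all the substance is already inside Corollary~\ref{cor:two-pencils}, and what remains is the combinatorial bookkeeping of matching the four side-pairs $a_{41}, a_{23}, a_{12}, a_{34}$ with the two diagonal points $A_5,A_6$. The mildly tedious step is to check that each of the eight triples in the statement is produced exactly once by the four applications, which follows because each of the four sides of the quadrilateral is used as the common line $a$ in exactly one application.
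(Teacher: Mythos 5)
Your proposal is correct and is exactly the paper's argument: the paper's proof consists of the single sentence that the claim follows by applying Corollary~\ref{cor:two-pencils} to neighboring pairs of vertices of the quadrilateral, which is precisely the four applications you spell out. Your version merely makes explicit the bookkeeping (which side plays the role of the common line $a$ and which diagonal point $A_5$ or $A_6$ arises as $a_1\times a_2$) that the paper leaves to the reader.
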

\begin{proof}
This follows directly by applying Corollary~\ref{cor:two-pencils} to neighboring pairs of vertices of the 
quarilateral $A_1,A_2,A_3,A_4$.
\end{proof}
\begin{figure}[h]
\begin{center}
\begin{tikzpicture}[line cap=round,line join=round,x=10,y=10]
\clip(-9.558188933527463,-13.044783003455633) rectangle (22.8,12.747433541487382);
\draw [line width=1.pt,domain=-9.558188933527463:24.5570478440037] plot(\x,{(-1.2607910721540152-1.4594336348911212*\x)/4.874508340536357});
\draw [line width=1.pt,domain=-9.558188933527463:24.5570478440037] plot(\x,{(--13.425488873459358-3.473452051040869*\x)/2.0723957615453967});
\draw [line width=1.pt,domain=-9.558188933527463:24.5570478440037] plot(\x,{(--55.58711145330577-0.37945274507169113*\x)/-10.18684677154005});
\draw [line width=1.pt,domain=-9.558188933527463:24.5570478440037] plot(\x,{(-0.9583898851673018--5.312338431003681*\x)/3.239942669458297});
\draw [line width=1.pt,color=red,domain=-9.558188933527463:24.5570478440037] plot(\x,{(-27.89112169863715--4.609093669853599*\x)/7.663279497090336});
\draw [line width=1.pt,color=darkgreen,domain=-9.558188933527463:24.5570478440037] plot(\x,{(--1043.1625099763473--168.17689945318602*\x)/-89.92759622237712});
\draw [line width=1.pt,color=red,domain=-9.558188933527463:24.5570478440037] plot(\x,{(--0.6131138989436591-5.263823700840051*\x)/-1.937508759680848});
\draw [line width=1.pt,color=darkgreen,domain=-9.558188933527463:24.5570478440037] plot(\x,{(--1.3596802288324006-5.368723738748694*\x)/-4.753671315843616});
\draw [line width=1.pt,color=red,domain=-9.558188933527463:24.5570478440037] plot(\x,{(--18.009230600054543--3.327977104164439*\x)/-7.925721973379549});
\draw [line width=1.pt,color=darkgreen,domain=-9.558188933527463:24.5570478440037] plot(\x,{(--322.3268669748869-26.695969513555102*\x)/-26.237030075395356});
\draw [line width=1.pt,color=red,domain=-9.558188933527463:24.5570478440037] plot(\x,{(--17.106261464387085-3.545382012364622*\x)/0.14135295370002687});
\draw [line width=1.pt,color=darkgreen,domain=-9.558188933527463:24.5570478440037] plot(\x,{(--1137.4735532434042-357.415075607428*\x)/354.7920382951311});
\draw [line width=.6pt,dash pattern=on 3pt off 3pt,color=blue,domain=-9.558188933527463:24.5570478440037] plot(\x,{(-26.111635813655866--0.46053790566055586*\x)/3.8922952989896364});
\draw [line width=.6pt,dash pattern=on 3pt off 3pt,color=orange,domain=-9.558188933527463:24.5570478440037] plot(\x,{(-10.150615724279898-0.038446759184029844*\x)/2.200970675938202});
\draw [line width=.6pt,dash pattern=on 3pt off 3pt,color=orange,domain=-9.558188933527463:24.5570478440037] plot(\x,{(--12.054626470030534-9.982093909270795*\x)/-2.7305129652533764});
\draw [line width=.6pt,dash pattern=on 3pt off 3pt,color=blue,domain=-9.558188933527463:24.5570478440037] plot(\x,{(--1.7147457198731635--0.8008195086738374*\x)/1.0971898154761464});
\draw [line width=.6pt,dash pattern=on 3pt off 3pt,color=blue,domain=-9.558188933527463:24.5570478440037] plot(\x,{(-34.49287108795462--0.9006140304383488*\x)/4.2172240268645425});
\draw [line width=.6pt,dash pattern=on 3pt off 3pt,color=orange,domain=-9.558188933527463:24.5570478440037] plot(\x,{(--32.15132414333358--0.38065060634781656*\x)/-7.790236211708326});
\draw [line width=.6pt,dash pattern=on 3pt off 3pt,color=orange,domain=-9.558188933527463:24.5570478440037] plot(\x,{(-15.451060625310042--9.209376695795576*\x)/1.1014751229443607});
\draw [line width=.6pt,dash pattern=on 3pt off 3pt,color=blue,domain=-9.558188933527463:24.5570478440037] plot(\x,{(--45.3655101491843--5.804992511570754*\x)/18.737679436612908});
\begin{small}
\draw [color=black,fill=white] (0.019161373684926764,-0.2643868335772935) circle (1.8pt);
\draw[color=black] (-1.2,-0.48) node {$A_1$};
\draw [color=black,fill=white] (4.893669714221284,-1.7238204684684146) circle (1.8pt);
\draw[color=black] (5.8,-1.246215860556169) node {$A_4$};
\draw [color=black,fill=white] (6.966065475766681,-5.1972725195092835) circle (1.8pt);
\draw[color=black] (6.8,-6.15) node {$A_3$};
\draw [color=black,fill=white] (-3.2207812957733704,-5.576725264580975) circle (1.8pt);
\draw[color=black] (-3.3,-6.55) node {$A_2$};
\draw[color=black] (-5.5,2.) node {$a_{41}$};
\draw[color=black] (-2.08,11.558430651117668) node {$a_{34}$};
\draw[color=black] (-7.934742679368808,-6.3) node {$a_{23}$};
\draw[color=black] (8.1,11.284045368724657) node {$a_{12}$};
\draw[color=red] (7.2,1.4) node {$g_2$};
\draw[color=darkgreen] (-7.8432809185711365,4.75) node {$h_2$};
\draw[color=red] (-3.3,-11.261278667901063) node {$g_1$};

\draw [color=white,fill=white] (-1.7,-11.2) circle (7pt);
\draw[color=orange] (-1.7,-11.2) node[anchor=center] {$\ell^{(2)}_1$};

\draw [color=white,fill=white] (.75,-10) circle (7pt);
\draw[color=orange] (.75,-10) node[anchor=center] {$\ell^{(2)}_2$};

\draw [color=white,fill=white] (13.6,6.5) circle (8pt);
\draw[color=blue] (13.6,6.5) node[anchor=center] {$\ell^{(1)}_2$};

\draw [color=white,fill=white] (13.6,11.28) circle (9pt);
\draw[color=blue] (13.6,11.28) node[anchor=center] {$\ell^{(1)}_1$};

\draw [color=white,fill=white] (2.6,-6.3) circle (8pt);
\draw [color=white,fill=white] (2.6,-7.6) circle (8pt);
\draw[color=blue] (2.6,-6.3) node[anchor=center] {$\ell^{(3)}_1$};
\draw[color=blue] (2.6,-7.6) node[anchor=center] {$\ell^{(3)}_2$};

\draw [color=white,fill=white] (-8,-3.5) circle (7pt);
\draw [color=white,fill=white] (-8,-4.5) circle (7pt);
\draw[color=orange] (-8,-3.3) node[anchor=center] {$\ell^{(4)}_2$};
\draw[color=orange] (-8,-4.5) node[anchor=center] {$\ell^{(4)}_1$};

\draw[color=darkgreen] (-8.437782363755998,-10.986893385508052) node {$h_1$};
\draw[color=red] (-8.80362940694668,0.8) node {$g_3$};
\draw[color=darkgreen] (1.9888583671784612,-11.261278667901063) node {$h_3$};
\draw[color=red] (6.05,-11.261278667901063) node {$g_4$};
\draw[color=darkgreen] (-7.431702994981619,11.83) node {$h_4$};
\draw [color=black,fill=white] (2.043025144438736,3.054020430181197) circle (1.8pt);
\draw[color=black] (0.7083937160110716,3.55552658132152) node {$A_5$};
\draw [color=black,fill=white] (15.44065391936529,-4.881600170607331) circle (1.8pt);
\draw[color=black] (15.8,-4.081530445283947) node {$A_6$};
\draw [color=black,fill=white] (-6.352233813412808,-7.460143190545852) circle (1.8pt);
\draw [color=black,fill=white] (-2.459938514423172,-6.9996052848852965) circle (1.8pt);
\draw [color=black,fill=white] (-1.5709641052336292,-4.584439654716479) circle (1.8pt);
\draw [color=black,fill=white] (-3.771934781171831,-4.545992895532449) circle (1.8pt);
\draw [color=black,fill=white] (1.6342626852431308,1.5596839057998286) circle (1.8pt);
\draw [color=black,fill=white] (0.9458353289625898,2.2532009215073594) circle (1.8pt);
\draw [color=black,fill=white] (4.6279572941259675,4.9407168475822125) circle (1.8pt);
\draw [color=black,fill=white] (4.364775650496507,11.541777815070624) circle (1.8pt);
\draw [color=black,fill=white] (9.324773300096512,-6.187683949679102) circle (1.8pt);
\draw [color=black,fill=white] (5.107549273231969,-7.088297980117451) circle (1.8pt);
\draw [color=black,fill=white] (7.650417707656965,-4.500949564259514) circle (1.8pt);
\draw [color=black,fill=white] (4.99922787311726,-4.371405736494197) circle (1.8pt);
\draw [color=black,fill=white] (1.3387501317294563,-2.8343866290162523) circle (1.8pt);
\draw [color=black,fill=white] (0.23727500878509566,-12.04376332481183) circle (1.8pt);
\draw [color=black,fill=white] (20.780704581051644,8.859012941751951) circle (1.8pt);
\draw [color=black,fill=white] (-6.431879825794232,0.42846799425110205) circle (1.8pt);
\end{small}
\end{tikzpicture}
\caption{Harmonic pencils in a quadrilateral.}\label{fig:free-quadrilateral}
\end{center}
\end{figure}
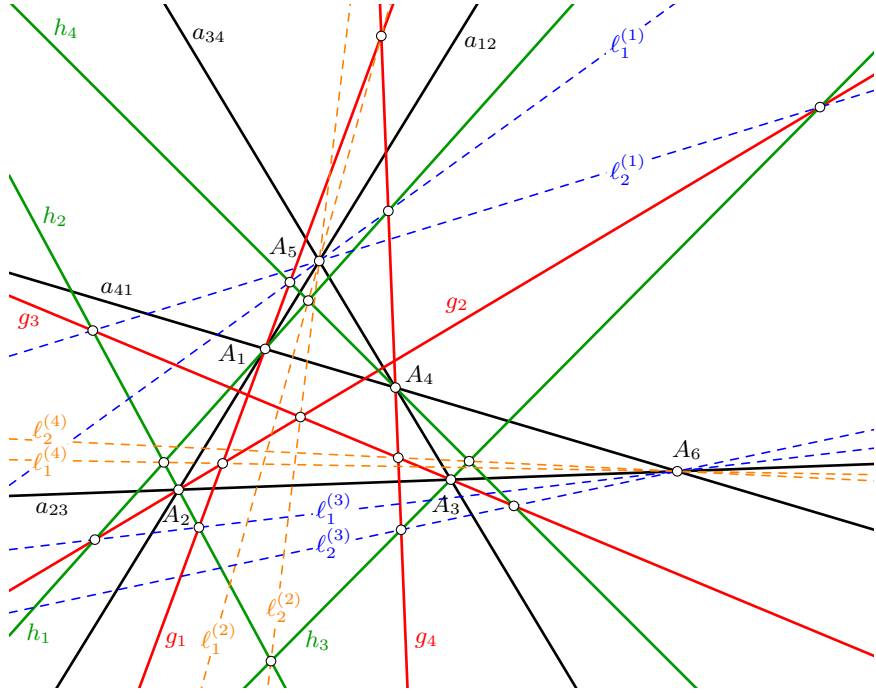
Compared to the situation in Theorem~\ref{steiner-add}, we 
observe that the four lines which carry the quintuples of collinear 
intersection points each split up in two lines (compare the dashed lines in Figures~\ref{fig:steiner-add} 
and~\ref{fig:free-quadrilateral}). 
One can move e.g.~$g_1$ in Proposition~\ref{prop:free-quadrilateral} such that two of the split lines 
collapse. Then, interestingly, not only one of the pairs of lines collapses, 
but all four pairs collapse simultaneously:
\begin{theorem}\label{thm:projective-quadrilateral}
Let  $A_1, A_2, A_3, A_4$ be the vertices of a quadrilateral with sides
$a_{i(i+1)}=A_i\times A_{i+1}$ for $i\in\{1,2,3,4\}$ (where indices are taken cyclically).
The quadrilateral is completed with $A_5=a_{12}\times a_{34}$, $A_6=a_{41}\times a_{23}$.
Assume that in each vertex $A_1,A_2,A_3,A_4$ two lines $g_i, h_i$ 
build together with the sides a harmonic pencil $(a_{(i-1)i} a_{i(i+1)};g_i h_i)$.
Finally, denote by $\ell^{(i)}$ the following four pairs  of lines (see the dashed lines in Figure~\ref{fig:free-quadrilateral}):
\begin{alignat*}{4}
&\text{line $\ell^{(1)}_1$: } &&\text{$A_5,\  g_1\times h_4,\  h_1\times g_4$\quad and} \quad&& \text{line $\ell^{(1)}_2$: } &&\text{$A_5,\ g_3\times h_2,\ h_3\times g_2$}\\
&\text{line $\ell^{(2)}_1$: } &&\text{$A_5,\  g_1\times g_4,\  h_1\times h_4$\quad and } \quad&& \text{line $\ell^{(2)}_2$: } &&\text{$A_5,\ g_2\times g_3,\ h_2\times h_3$}\\
&\text{line $\ell^{(3)}_1$: } &&\text{$A_6,\  g_1\times h_2,\  h_1\times g_2$\quad and} \quad&& \text{line $\ell^{(3)}_2$: } &&\text{$A_6,\ g_3\times h_4,\ h_3\times g_4$}\\
&\text{line $\ell^{(4)}_1$: } &&\text{$A_6,\  g_1\times g_2,\  h_1\times h_2$\quad and} \quad&& \text{line $\ell^{(4)}_2$: } &&\text{$A_6,\ g_3\times g_4,\ h_3\times h_4$}
\end{alignat*}
Then, if the two lines of one of the pairs $\ell^{(i)}$ coincide, then the two lines of all pairs coincide,
and this is the  case if and only if one of the following two equivalent conditions
holds:
%
%
%
%
%
%
\begin{enumerate}
\item 
$\begin{displaystyle}
\prod_{i=1}^4 \frac{A_iB_{i(i+1)(i+2)}}{B_{i(i+1)(i+2)}A_{i+1}}\cdot \frac{A_iB_{i(i+1)(i+3)}}{B_{i(i+1)(i+3)}A_{i+1}}=1
\end{displaystyle}$\\
where $B_{ijk}=a_{ij}\times g_k$ (see Figure~\ref{fig:theorem8i}).\label{item:8i}
\item 
$\begin{displaystyle}
\frac{A_1D_2}{D_2 A_3}\cdot \frac{A_2D_3}{D_3 A_4}\cdot \frac{A_3D_4}{D_4 A_1}\cdot \frac{A_4D_1}{D_1 A_2}=1
\end{displaystyle}$\\
where $D_1=g_{1}\times  (A_2\times A_4)$, $D_3=g_{3}\times ( A_2\times A_4)$, $D_2=g_{2}\times  (A_1\times A_3)$, $D_4=g_{4}\times ( A_1\times A_3)$ (see Figure~\ref{fig:theorem8ii}).\label{diag}
\end{enumerate}
\end{theorem}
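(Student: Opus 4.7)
The plan is to assemble three pieces: (a) a harmonic-pencil argument in the spirit of the proof of Proposition~\ref{prop:free-triangle}\,(\ref{prop:5ii}), which pairs the lines $\ell^{(1)}_j,\ell^{(2)}_j$ at $A_5$ and $\ell^{(3)}_j,\ell^{(4)}_j$ at $A_6$ into harmonic partners; (b) a product of four Menelaus relations establishing the equivalence~(\ref{item:8i})$\Longleftrightarrow$(\ref{diag}); and (c) a projective normalization reducing one pair-coincidence to the numerical identity~(\ref{diag}).

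For (a), I would show that $(a_{12}\,a_{34};\ell^{(2)}_1\,\ell^{(1)}_1)$ is a harmonic pencil at $A_5$ by cutting the harmonic pencil $(a_{34}\,a_{41};g_4\,h_4)$ at $A_4$ with the line $g_1$: the four intersection points $a_{34}\times g_1$, $a_{41}\times g_1=A_1$, $g_4\times g_1$, $h_4\times g_1$ are harmonic on $g_1$, and projecting them from $A_5$, together with the fact that $A_5$ lies on both $a_{12}$ and $a_{34}$ and the defining property of $\ell^{(2)}_1,\ell^{(1)}_1$, yields the four lines $a_{34},a_{12},\ell^{(2)}_1,\ell^{(1)}_1$. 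The analogous cut of $(a_{23}\,a_{34};g_3\,h_3)$ at $A_3$ by $g_2$ produces $(a_{12}\,a_{34};\ell^{(2)}_2\,\ell^{(1)}_2)$ harmonic at $A_5$, and uniqueness of the harmonic conjugate gives $\ell^{(1)}_1=\ell^{(1)}_2\Longleftrightarrow\ell^{(2)}_1=\ell^{(2)}_2$. The completely parallel argument at $A_6$, via the triangles $A_1A_2A_6$ and $A_3A_4A_6$, yields $\ell^{(3)}_1=\ell^{(3)}_2\Longleftrightarrow\ell^{(4)}_1=\ell^{(4)}_2$. For (b), I would apply Menelaus's Theorem to each $g_k$ viewed as a transversal of the triangle on the remaining three vertices: $g_k$ meets two sides of the quadrilateral at points $B_{\cdot\cdot k}$ appearing in~(\ref{item:8i}) and one diagonal of the quadrangle at the point $D_k$ of~(\ref{diag}), producing a signed product equal to $-1$. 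Multiplying the four resulting relations gives $(-1)^4=1$, and the twelve factors split cleanly into the eight side-ratios of~(\ref{item:8i}) and the four diagonal-ratios of~(\ref{diag}); the two products are therefore reciprocals, so~(\ref{item:8i})$\Longleftrightarrow$(\ref{diag}).

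For (c), it remains to link~(\ref{diag}) with the coincidence of one pair, say $\ell^{(2)}_1=\ell^{(2)}_2$; the equivalence $\ell^{(4)}_1=\ell^{(4)}_2\Longleftrightarrow$(\ref{diag}) then follows by the symmetric argument at $A_6$. Both conditions are projectively invariant---the product in~(\ref{diag}) equals $\operatorname{CR}(A_1,A_3;D_2,D_4)\cdot\operatorname{CR}(A_2,A_4;D_3,D_1)$, a product of two diagonal cross-ratios---so I may choose projective coordinates in which $A_1A_2A_3A_4$ is the unit square, making $A_5$ and $A_6$ the two points at infinity in the directions of its sides. Then each pencil $(a_{(i-1)i}\,a_{i(i+1)};g_i\,h_i)$ is harmonic with respect to two perpendicular coordinate directions at $A_i$, so the harmonic conjugate of a line of slope $m$ is the line of slope $-m$, and each $g_i$ is captured by a single slope $m_i$. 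A short computation then shows that~(\ref{diag}) reduces to $m_2m_4=m_1m_3$ and that both ``$A_5,\,g_1\times g_4,\,g_2\times g_3$ collinear'' and ``$A_6,\,g_1\times g_2,\,g_3\times g_4$ collinear'' reduce to the same equation; combined with~(a) this closes every equivalence in the statement.

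The only delicate step is~(c); the parallelogram normalization is a shortcut hiding the synthetic content. A coordinate-free proof would chain Ceva's Theorem in the triangles $A_1A_4A_5$ and $A_2A_3A_5$ (to locate the cevians from $A_5$ through $P_{14}=g_1\times g_4$ and $P_{23}=g_2\times g_3$) with the projectivity $a_{23}\to a_{41}$ induced by projection from $A_5$ (fixing $A_6$), and then match the resulting ratio equation against~(\ref{diag}). That route is doable but tedious, while the parallelogram shortcut keeps the bookkeeping minimal.
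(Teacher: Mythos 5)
Your proposal is correct, and parts (a) and (b) essentially coincide with the paper's own argument: the paper likewise obtains $\ell^{(1)}_1=\ell^{(1)}_2\Leftrightarrow\ell^{(2)}_1=\ell^{(2)}_2$ and $\ell^{(3)}_1=\ell^{(3)}_2\Leftrightarrow\ell^{(4)}_1=\ell^{(4)}_2$ from the harmonicity of the pencils $(a_{12}a_{34};\ell^{(2)}_j\ell^{(1)}_j)$ at $A_5$ (it cites Proposition~\ref{prop:free-triangle}\,(\ref{prop:5ii}); your section-and-reprojection argument is exactly the mechanism behind that proposition), and it proves (\ref{item:8i})$\Leftrightarrow$(\ref{diag}) by multiplying the same four Menelaos relations, one for each $g_k$ as a transversal of the triangle on the remaining three vertices. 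Where you genuinely diverge is step (c). The paper stays synthetic: it derives $\ell^{(2)}_1=\ell^{(2)}_2\Rightarrow\ell^{(4)}_1=\ell^{(4)}_2$ from three Menelaos applications in triangles having $g_1\times g_4$ as a vertex, and it links condition (\ref{item:8i}) to $\ell^{(2)}_1=\ell^{(2)}_2$ through two Ceva and six Menelaos applications involving auxiliary points $C_{23},C_{41}$ on the sides and $C_{23}',C_{41}'$ on the diagonal $A_2\times A_4$, passing through the intermediate identity~(\ref{eq:zeta}). You replace all of that by observing that the coincidence conditions and the product in (\ref{diag}) --- which indeed equals $\operatorname{CR}(A_1,A_3;D_2,D_4)\cdot\operatorname{CR}(A_2,A_4;D_3,D_1)$ --- are projective invariants, and normalizing $A_1A_2A_3A_4$ to the unit square; I checked the resulting slope computations and they are right: harmonic conjugation at each vertex becomes $m\mapsto-m$, and the collinearity of $A_5,\,g_1\times g_4,\,g_2\times g_3$, the collinearity of $A_6,\,g_1\times g_2,\,g_3\times g_4$, and condition (\ref{diag}) all reduce to $m_1m_3=m_2m_4$. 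Your route is shorter, makes the symmetry of the configuration explicit, and automatically yields \emph{both} directions of the $A_5$/$A_6$ link in one stroke; the paper's route avoids coordinates entirely. In a final write-up you should make two small points explicit: that $\ell^{(2)}_1=\ell^{(2)}_2$ is equivalent to the collinearity of $A_5$, $g_1\times g_4$, $g_2\times g_3$ alone (the $h$-intersections ride along by Proposition~\ref{prop:free-quadrilateral}, since two lines through $A_5$ coincide once they share one further point), and that the normalization is available because no three vertices of a genuine quadrilateral are collinear.
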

\begin{proof}
According to Proposition~\ref{prop:free-triangle}(\ref{prop:5ii}), the lines $(a_{12}a_{34};\ell^{(2)}_1\ell^{(1)}_1)$
and $(a_{12}a_{34};\ell^{(2)}_2\ell^{(1)}_2)$ are harmonic. Therefore,  we have $\ell^{(1)}_1=\ell^{(1)}_2$
if and only if $\ell^{(2)}_1=\ell^{(2)}_2$. The same argument yields that $\ell^{(3)}_1=\ell^{(3)}_2$
if and only if $\ell^{(4)}_1=\ell^{(4)}_2$.

Suppose now that $\ell^{(2)}_1=\ell^{(2)}_2$. In order to show that this implies $\ell^{(4)}_1=\ell^{(4)}_2$
we use the Theorem of Menelaos for the following triangles and transversal lines:
\begin{itemize}
\item triangle $A_1A_4A_5$ with line $a_{23}$:
\begin{equation}\label{eq:m1}
\frac{A_4A_6}{A_6A_1}\cdot\frac{A_1A_2}{A_2A_5}\cdot\frac{A_5A_3}{A_3A_4}=-1.
\end{equation}
\item triangle  $A_1A_5(g_1\times g_4)$ with line $g_2$:
\begin{equation}\label{eq:m2}
\frac{A_1(g_1\times g_2)}{(g_1\times g_2)(g_1\times g_4)}\cdot\frac{(g_1\times g_4)(g_2\times g_3)}{(g_2\times g_3)A_5}\cdot\frac{A_5A_2}{A_2A_1}=-1.
\end{equation}
\item triangle  $A_4A_5(g_1\times g_4)$ with line $g_3$:
\begin{equation}\label{eq:m3}
\frac{(g_1\times g_4)(g_3\times g_4)}{(g_3\times g_4)A_4}\cdot\frac{A_4A_3}{A_3A_5}\cdot\frac{A_5(g_2\times g_3)}{(g_2\times g_3)(g_1\times g_4)}=-1.
\end{equation}
\end{itemize}
Multiplication of the equations~(\ref{eq:m1})--(\ref{eq:m3}) yields
\begin{equation}\label{eq:m4}
\frac{A_1(g_1\times g_2)}{(g_1\times g_2)(g_1\times g_4)}\cdot\frac{(g_1\times g_4)(g_3\times g_4)}{(g_3\times g_4)A_4}\cdot\frac{A_4A_6}{A_6A_1}=-1.
\end{equation}
Using Menelaos and~(\ref{eq:m4}) in the triangle $A_1(g_1\times g_4) A_4$ yields that the points
$g_1\times g_2$, $g_3\times g_4$, and $A_6$ are collinear. Hence we have $\ell^{(4)}_1=\ell^{(4)}_2$ as claimed.
The same argument  shows that  $\ell^{(2)}_1=\ell^{(2)}_2$ follows from $\ell^{(4)}_1=\ell^{(4)}_2$.

Now we show that $\ell^{(2)}_1=\ell^{(2)}_2$ is equivalent to condition~(\ref{item:8i}) in the theorem. 
Before we start, we set up some identities which hold in general, i.e., without
the hypothesis $\ell^{(2)}_1=\ell^{(2)}_2$. To this end, we apply
Ceva's Theorem to the following triangles:
\begin{itemize}
\item triangle $A_1A_5A_4$ with point $g_1\times g_4$:
\begin{equation}\label{eq:c1}
\frac{A_1B_{124}}{B_{124}A_5}\cdot\frac{A_5B_{341}}{B_{341}A_4}\cdot\frac{A_4C_{41}}{C_{41}A_1}=1,
\end{equation}
where $C_{41}=a_{41}\times \ell^{(2)}_1$.
\item triangle $A_2A_5A_3$ with point $g_2\times g_3$:
\begin{equation}\label{eq:c2}
\frac{A_5B_{123}}{B_{123}A_2}\cdot\frac{A_2C_{23}}{C_{23}A_3}\cdot\frac{A_3B_{342}}{B_{342}A_5}=1,
\end{equation}
where $C_{23}=a_{23}\times \ell^{(2)}_2$.
\end{itemize}
Now we employ Menelaos once more as follows:
\begin{itemize}
\item triangle $A_1A_5A_4$ with transversal line $g_2$:
\begin{equation}\label{eq:mm1}
\frac{A_1A_2}{A_2A_5}\cdot\frac{A_5B_{342}}{B_{342}A_4}\cdot\frac{A_4B_{412}}{B_{412}A_1}=-1.
\end{equation}
\item triangle $A_1A_5A_4$ with transversal line $g_3$:
\begin{equation}\label{eq:mm2}
\frac{A_1B_{123}}{B_{123}A_5}\cdot\frac{A_5A_3}{A_3A_4}\cdot\frac{A_4B_{413}}{B_{413}A_1}=-1.
\end{equation}
\item triangle $A_2A_5A_3$ with transversal line $g_1$:
\begin{equation}\label{eq:mm3}
\frac{A_5A_1}{A_1A_2}\cdot\frac{A_2B_{231}}{B_{231}A_3}\cdot\frac{A_3B_{341}}{B_{341}A_5}=-1.
\end{equation}
\item triangle $A_2A_5A_3$ with transversal line $g_4$:
\begin{equation}\label{eq:mm4}
\frac{A_5B_{124}}{B_{124}A_2}\cdot\frac{A_2B_{234}}{B_{234}A_3}\cdot\frac{A_3A_4}{A_4A_5}=-1.
\end{equation}
\end{itemize}
The product of the six equations~(\ref{eq:c1})--(\ref{eq:mm4}) simplifies to
$$
\zeta\cdot\frac{A_1A_5}{A_5A_2}\cdot\frac{A_2C_{23}}{C_{23}A_3}\cdot\frac{A_3A_5}{A_5A_4}\cdot\frac{A_4C_{41}}{C_{41}A_1}=1,
$$
where $\zeta$ is the expression (\ref{item:8i}) in the theorem. Hence, $\zeta=1$ if and only if 
\begin{equation}\label{eq:zeta}
\frac{A_1A_5}{A_5A_2}\cdot\frac{A_2C_{23}}{C_{23}A_3}\cdot\frac{A_3A_5}{A_5A_4}\cdot\frac{A_4C_{41}}{C_{41}A_1}=1.
\end{equation}
So, we are done if we can show that~(\ref{eq:zeta}) is equivalent to the condition $\ell^{(2)}_1=\ell^{(2)}_2$.
Indeed, define the points $C_{23}'=\ell^{(2)}_2 \times (A_2\times A_4)$ and $C_{41}'=\ell^{(2)}_1 \times (A_2\times A_4)$
(see Figure~\ref{fig:h}).
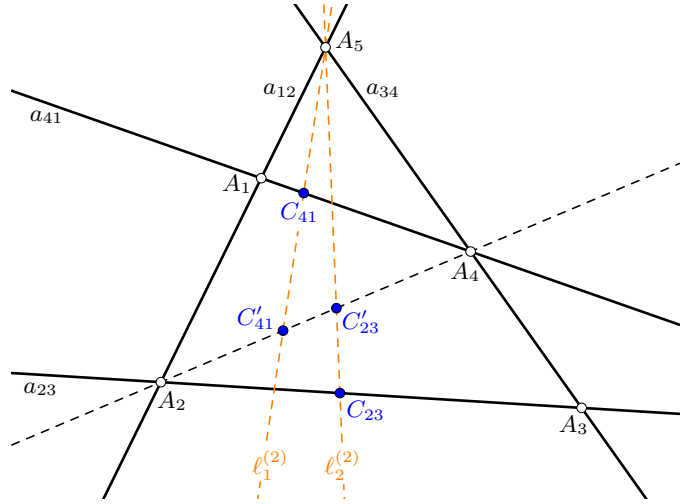
\begin{figure}[h] 
\begin{center}
\begin{tikzpicture}[line cap=round,line join=round,x=18,y=18]
\clip(-4.879662428297845,-7.1) rectangle (9.004205261550823,3.2672944728467885);
\draw [line width=1pt,domain=-4.879662428297845:9.004205261550823] plot(\x,{(-1.1970170352558647-1.538057814938893*\x)/4.353485679572828});
\draw [line width=1pt,domain=-4.879662428297845:9.004205261550823] plot(\x,{(--10.829964661987475-3.2766665237551305*\x)/2.3080392339857827});
\draw [line width=1pt,domain=-4.879662428297845:9.004205261550823] plot(\x,{(--41.70287412723992--0.5394449886943722*\x)/-8.747027035509666});
\draw [line width=1pt,domain=-4.879662428297845:9.004205261550823] plot(\x,{(-2.099801019684186--4.275279349999652*\x)/2.0855021219510554});
\draw [line width=.6pt,dash pattern=on 3pt off 3pt,color=orange,domain=-4.879662428297845:9.004205261550823] plot(\x,{(--5.795998488763779-4.488143425552458*\x)/-0.6676999568423239});
\draw [line width=.6pt,dash pattern=on 3pt off 3pt,color=orange,domain=-4.879662428297845:9.004205261550823] plot(\x,{(-19.55694684829457--11.24001477219463*\x)/-0.4637361067373289});
\draw [line width=.6pt,dash pattern=on 3pt off 3pt,domain=-4.879662428297845:9.004205261550823] plot(\x,{(-25.116808318075428--2.7372215350607583*\x)/6.438987801523883});
\begin{small}
\draw [color=black,fill=white] (0.3045405622080699,-0.3825481808152599) circle (1.8pt);
\draw[color=black] (-0.2,-0.5) node {$A_1$};
\draw [color=black,fill=white] (4.658026241780898,-1.9206059957541528) circle (1.8pt);
\draw[color=black] (4.55,-2.35) node {$A_4$};
\draw [color=black,fill=white] (6.966065475766681,-5.1972725195092835) circle (1.8pt);
\draw[color=black] (6.8,-5.55) node {$A_3$};
\draw [color=black,fill=white] (-1.7809615597429853,-4.657827530814911) circle (1.8pt);
\draw[color=black] (-1.55,-5.05) node {$A_2$};
\draw[color=black] (-4.154955048882667,0.9) node {$a_{41}$};
\draw[color=black] (2.8378354542462403,1.46) node {$a_{34}$};
\draw[color=black] (-4.2820966943941015,-4.8) node {$a_{23}$};
\draw[color=black] (0.7,1.46) node {$a_{12}$};
\draw [color=black,fill=white] (1.6425558445452941,2.360383147976042) circle (1.8pt);
\draw[color=black] (2.15,2.48) node {$A_5$};
\draw [color=black,fill=white] (15.405945915269998,-5.717775221717341) circle (1.8pt);
\draw[color=black] (16.87427311870863,-9.167158458171551) node {$A_6$};

\draw [color=white,fill=white] (1,-1.16) circle (6pt);
\draw [fill=blue] (1.1880537501369801,-0.6946875705625972) circle (1.8pt);
\draw[color=blue] (1.1,-1.1) node[anchor=center] {$C_{41}$};

\draw [fill=blue] (1.9415821758708158,-4.88740354049458) circle (1.8pt);
\draw [fill=blue] (0.7590973373263732,-3.57804540857895) circle (1.8pt);
\draw[color=blue] (.2,-3.3) node {$C_{41}'$};
\draw [fill=blue] (1.8681105884318072,-3.1066025285543737) circle (1.8pt);
\draw[color=blue] (2.48,-5.25) node[anchor=center]  {$C_{23}$};

\draw[color=blue] (2.3801255304050755,-3.394927751952404) node {$C_{23}'$};
\draw [color=white,fill=white] (2,-6.4) circle (7pt);
\draw [color=white,fill=white] (.5,-6.4) circle (7pt);

\draw[color=orange] (2,-6.4) node[anchor=center] {$\ell^{(2)}_2$};
\draw[color=orange] (.5,-6.4) node[anchor=center] {$\ell^{(2)}_1$};

\end{small}
\end{tikzpicture}
\caption{Equivalence of (\ref{eq:zeta}) and $\ell^{(2)}_1=\ell^{(2)}_2$.}\label{fig:h}
\end{center}
\end{figure}
We use again the Theorem of Menelaos as follows:
\begin{itemize}
\item triangle $A_1A_2A_4$ with transversal line $\ell^{(2)}_1$:
\begin{equation}\label{eq:mm33}
\frac{A_1A_5}{A_5A_2}\cdot\frac{A_2C_{41}'}{C_{41}'A_4}\cdot\frac{A_4C_{41}}{C_{41}A_1}=-1.
\end{equation}
\item triangle $A_2A_3A_4$ with transversal line $\ell^{(2)}_2$:
\begin{equation}\label{eq:mm44}
\frac{A_2C_{23}}{C_{23}A_3}\cdot\frac{A_3A_5}{A_5A_4}\cdot\frac{A_4C_{23}'}{C_{23}'A_2}=-1.
\end{equation}
\end{itemize}
The product of~(\ref{eq:mm33}) and~(\ref{eq:mm44}) is 
$$
\frac{A_1A_5}{A_5A_2}\cdot\frac{A_2C_{23}}{C_{23}A_3}\cdot\frac{A_3A_5}{A_5A_4}\cdot
\frac{A_4C_{41}}{C_{41}A_1}\cdot\frac{A_2C_{41}'}{C_{41}'A_4}\cdot\frac{A_4C_{23}'}{C_{23}'A_2}=1.
$$
Hence, equation~(\ref{eq:zeta}) holds if and only if 
$$\frac{A_2C_{41}'}{C_{41}'A_4}\cdot\frac{A_4C_{23}'}{C_{23}'A_2}=1,$$
or  equivalently
$$
\frac{A_2C_{41}'}{C_{41}'A_4}=\frac{A_2C_{23}'}{C_{23}'A_4}.
$$
But this is indeed true if and only if $\ell^{(2)}_1=\ell^{(2)}_2$, as claimed.

It remains to show that the conditions~(\ref{item:8i}) and~(\ref{diag}) in the theorem are equivalent.
We use again Menelaos (see Figure~\ref{fig:theorem8ii}):
\begin{itemize}
\item triangle $A_4A_2A_3$ with transversal line $g_1$:
\begin{equation}\label{eq:q1}
\frac{A_4D_1}{D_1A_2}\cdot\frac{A_2B_{231}}{B_{231}A_3}\cdot\frac{A_3B_{341}}{B_{341}A_4}=-1.
\end{equation}
\item triangle $A_1A_3A_4$ with transversal line $g_2$:
\begin{equation}\label{eq:q2}
\frac{A_1D_2}{D_2A_3}\cdot\frac{A_3B_{342}}{B_{342}A_4}\cdot\frac{A_4B_{412}}{B_{412}A_1}=-1.
\end{equation}
\item triangle $A_2A_4A_1$ with transversal line $g_3$:
\begin{equation}\label{eq:q3}
\frac{A_2D_3}{D_3A_4}\cdot\frac{A_4B_{413}}{B_{413}A_1}\cdot\frac{A_1B_{123}}{B_{123}A_2}=-1.
\end{equation}
\item triangle $A_3A_1A_2$ with transversal line $g_4$:
\begin{equation}\label{eq:q4}
\frac{A_3D_4}{D_4A_1}\cdot\frac{A_1B_{124}}{B_{124}A_2}\cdot\frac{A_2B_{234}}{B_{234}A_3}=-1.
\end{equation}
\end{itemize}
The product of (\ref{eq:q1})--(\ref{eq:q4}) is
$$
\zeta\cdot\frac{A_4D_1}{D_1A_2}\cdot\frac{A_1D_2}{D_2A_3}\cdot\frac{A_2D_3}{D_3A_4}\cdot\frac{A_3D_4}{D_4A_1}=1
$$
which shows that indeed~(\ref{item:8i}) and~(\ref{diag}) in the theorem are equivalent.
\end{proof}
{\bf Remark.} Note that if we fix three of the four lines  $g_1,\ldots,g_4$, then
there exists a unique position for the fourth line such that the criterion~(\ref{diag}) 
in Theorem~\ref{thm:projective-quadrilateral} is satisfied.

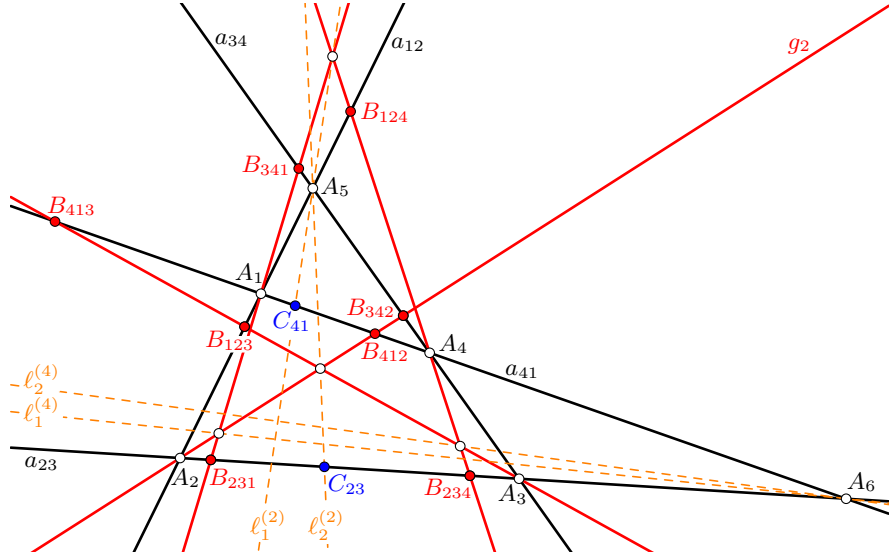
\begin{figure} 
\begin{center}
\begin{tikzpicture}[line cap=round,line join=round,x=14.5,y=14.5]
\clip(-6.151078883412192,-7.1) rectangle (16.50556234672547,7.1578288254967);
\draw [line width=1pt,domain=-6.151078883412192:16.50556234672547] plot(\x,{(-1.1970170352558647-1.538057814938893*\x)/4.353485679572828});
\draw [line width=1pt,domain=-6.151078883412192:16.50556234672547] plot(\x,{(--10.829964661987475-3.2766665237551305*\x)/2.3080392339857827});
\draw [line width=1pt,domain=-6.151078883412192:16.50556234672547] plot(\x,{(--41.70287412723992--0.5394449886943722*\x)/-8.747027035509666});
\draw [line width=1pt,domain=-6.151078883412192:16.50556234672547] plot(\x,{(-2.099801019684186--4.275279349999652*\x)/2.0855021219510554});
\draw [line width=1pt,color=red,domain=-6.151078883412192:16.50556234672547] plot(\x,{(-20.220512895729883--3.704511034898051*\x)/5.757642263288148});
\draw [line width=1pt,color=red,domain=-6.151078883412192:16.50556234672547] plot(\x,{(--1.8120911868802534-4.324056740445125*\x)/-1.294583377890019});
\draw [line width=1pt,color=red,domain=-6.151078883412192:16.50556234672547] plot(\x,{(--9.23649089029152--3.9564840943584376*\x)/-7.0801786912832885});
\draw [line width=1pt,color=red,domain=-6.151078883412192:16.50556234672547] plot(\x,{(--12.902923592384992-3.198395400077251*\x)/1.0388836217199673});
\draw [line width=.6pt,dash pattern=on 3pt off 3pt,color=orange,domain=-6.151078883412192:16.50556234672547] plot(\x,{(-5.561054538944495-0.1426478411300276*\x)/1.3569402719530843});
\draw [line width=.6pt,dash pattern=on 3pt off 3pt,color=orange,domain=-6.151078883412192:16.50556234672547] plot(\x,{(--5.795998488763773-4.488143425552457*\x)/-0.6676999568423239});
\draw [line width=.6pt,dash pattern=on 3pt off 3pt,color=orange,domain=-6.151078883412192:16.50556234672547] plot(\x,{(--29.18353309665426--1.1151016985373632*\x)/-8.10852608865262});
\draw [line width=.6pt,dash pattern=on 3pt off 3pt,color=orange,domain=-6.151078883412192:16.50556234672547] plot(\x,{(-19.556946848294572--11.240014772194627*\x)/-0.4637361067373291});
\begin{small}
\draw [color=black,fill=white] (0.3045405622080699,-0.3825481808152599) circle (1.8pt);
\draw[color=black] (.01,.15) node {$A_1$};
\draw [color=black,fill=white] (4.658026241780898,-1.9206059957541528) circle (1.8pt);
\draw[color=black] (5.27,-1.6848849169069489) node {$A_4$};
\draw [color=black,fill=white] (6.966065475766681,-5.1972725195092835) circle (1.8pt);
\draw[color=black] (6.8,-5.6) node {$A_3$};
\draw [color=black,fill=white] (-1.7809615597429853,-4.657827530814911) circle (1.8pt);
\draw[color=black] (-1.65,-5.18) node {$A_2$};
\draw[color=black] (7.05,-2.4) node {$a_{41}$};
\draw[color=black] (-0.45,6.217473991703563) node {$a_{34}$};
\draw[color=black] (-5.355085446081155,-4.8) node {$a_{23}$};
\draw[color=black] (4.1,6.11) node {$a_{12}$};
\draw [color=black] (3.976680703545163,-0.9533164959168606) circle (1.8pt);
\draw[color=red] (14.2,6.07) node {$g_2$};
\draw [color=black,fill=white] (-0.9900428156819493,-4.706604921260385) circle (1.8pt);
\draw[color=red] (-2.15,-8.412899467581532) node {$g_1$};
\draw [color=black,fill=white] (-0.11411321551660736,-1.2407884251508459) circle (1.8pt);
\draw[color=red] (12.1,-8.412899467581532) node {$g_3$};
\draw [color=black,fill=white] (5.696909863500865,-5.119001395831404) circle (1.8pt);
\draw[color=red] (7.24,-8.412899467581532) node {$g_4$};
\draw [color=black,fill=white] (1.6425558445452941,2.360383147976042) circle (1.8pt);
\draw[color=black] (2.22,2.4) node {$A_5$};
\draw [color=black,fill=white] (15.405945915269998,-5.717775221717341) circle (1.8pt);
\draw[color=black] (15.80628329641258,-5.276624105521643) node {$A_6$};
\draw [color=black,fill=white] (-0.7832496491003572,-4.01589206250847) circle (1.8pt);

\draw [color=white,fill=white] (-5.35,-2.6) circle (8pt);
\draw [color=white,fill=white] (-5.35,-3.44) circle (8pt);
\draw[color=orange] (-5.35,-3.46) node {$\ell^{(4)}_1$};
\draw[color=orange] (-5.35,-2.6) node {$\ell^{(4)}_2$};

\draw [color=black,fill=white] (2.1530097578377916,5.791550622998449) circle (1.8pt);

\draw [color=black,fill=white] (5.4465213499817695,-4.348134040596951) circle (1.8pt);
\draw [color=black,fill=white] (1.8360940174253262,-2.3305863291694253) circle (1.8pt);
\draw [color=black,fill=white] (2.2998301241626553,-13.570601101364051) circle (1.8pt);

\draw [color=white,fill=white] (2,-6.4) circle (7pt);
\draw [color=white,fill=white] (.5,-6.4) circle (7pt);

\draw[color=orange] (2,-6.4) node[anchor=center] {$\ell^{(2)}_2$};
\draw[color=orange] (.5,-6.4) node[anchor=center] {$\ell^{(2)}_1$};

\draw [fill=red] (-0.12,-1.2407884251508456) circle (1.8pt);
\draw [fill=white,color=white] (-.5-.5,-1.6-.3) rectangle (-.5+.7,-1.6+.1); 
\draw[color=red,fill=white] (-.5,-1.6) node {$B_{123}$};
\draw [fill=red] (5.696909863500866,-5.119001395831404) circle (1.8pt);
\draw[color=red] (5.14,-5.45) node {$B_{234}$};
\draw [fill=red] (1.2798942618834455,2.8752448755680837) circle (1.8pt);
\draw[color=red] (0.44,2.962154523619345) node {$B_{341}$};
\draw [fill=red] (3.2477012317588874,-1.422347459279314) circle (1.8pt);
\draw[color=red] (3.5,-1.9) node {$B_{412}$};
\draw [fill=red] (2.6179899551014887,4.360023074616235) circle (1.8pt);
\draw[color=red] (3.5,4.33) node {$B_{124}$};
\draw [fill=red] (-0.9900428156819492,-4.706604921260386) circle (1.8pt);
\draw[color=red] (-0.4,-5.19) node {$B_{231}$};
\draw [fill=red] (3.9766807035451635,-0.9533164959168607) circle (1.8pt);
\draw[color=red] (3.142975403473684,-0.75) node {$B_{342}$};
\draw [fill=red] (-5.009782974485891,1.494967320172414) circle (1.8pt);
\draw[color=red] (-4.587236643621545,1.9) node {$B_{413}$};

\draw [color=white,fill=white] (1,-1.16) circle (6pt);
\draw [fill=blue] (1.1880537501369801,-0.6946875705625972) circle (1.8pt);
\draw[color=blue] (1.1,-1.1) node[anchor=center] {$C_{41}$};
\draw [fill=blue] (1.941582175870817,-4.887403540494581) circle (1.8pt);
\draw[color=blue] (2.53,-5.35) node[anchor=center]  {$C_{23}$};
\end{small}
\end{tikzpicture}
\caption{Lines and points in Theorem~\ref{thm:projective-quadrilateral}(\ref{item:8i}).}\label{fig:theorem8i}
\end{center}
\end{figure}
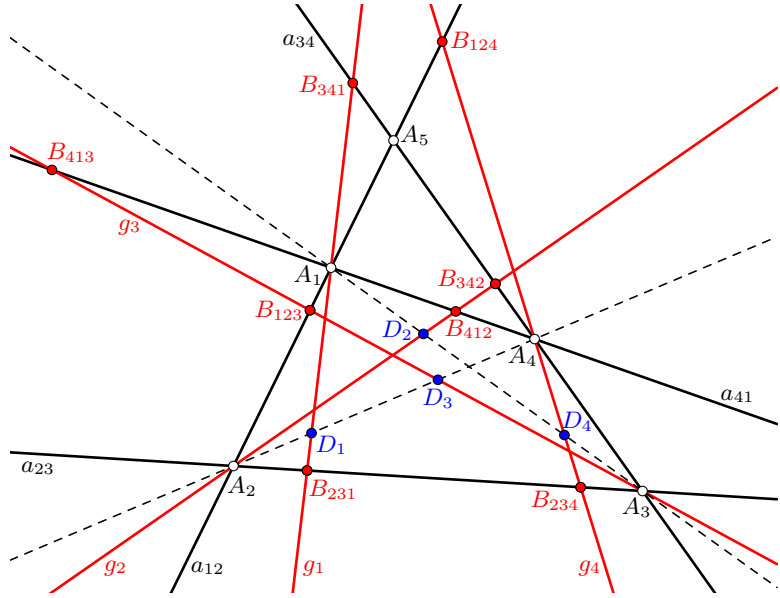
\begin{figure}
\begin{center}
\begin{tikzpicture}[line cap=round,line join=round,x=17.5,y=17.5]
\clip(-6.557932149048781,-7.387175421011461) rectangle (9.843340121926289,5.301560801029748);
\draw [line width=1pt,domain=-6.557932149048781:9.843340121926289] plot(\x,{(-1.1970170352558647-1.538057814938893*\x)/4.353485679572828});
\draw [line width=1pt,domain=-6.557932149048781:9.843340121926289] plot(\x,{(--10.829964661987475-3.2766665237551305*\x)/2.3080392339857827});
\draw [line width=1pt,domain=-6.557932149048781:9.843340121926289] plot(\x,{(--41.70287412723992--0.5394449886943722*\x)/-8.747027035509666});
\draw [line width=1pt,domain=-6.557932149048781:9.843340121926289] plot(\x,{(-2.099801019684186--4.275279349999652*\x)/2.0855021219510554});
\draw [line width=1pt,color=red,domain=-6.557932149048781:9.843340121926289] plot(\x,{(-19.092385335331244--3.9273787031091154*\x)/5.600657315784682});
\draw [line width=1pt,color=red,domain=-6.557932149048781:9.843340121926289] plot(\x,{(--1.5269966897605125-4.372390735374463*\x)/-0.5108542317460205});
\draw [line width=1pt,color=red,domain=-6.557932149048781:9.843340121926289] plot(\x,{(--9.800562851743056--3.897116349678612*\x)/-7.109138566736862});
\draw [line width=1pt,color=red,domain=-6.557932149048781:9.843340121926289] plot(\x,{(--12.992750768026935-3.195003690829563*\x)/0.983887518117422});
\draw [line width=.6pt,dash pattern=on 3pt off 3pt,domain=-6.557932149048781:9.843340121926289] plot(\x,{(-25.116808318075428--2.7372215350607583*\x)/6.438987801523883});
\draw [line width=.6pt,dash pattern=on 3pt off 3pt,domain=-6.557932149048781:9.843340121926289] plot(\x,{(-1.0820753801546223-4.814724338694024*\x)/6.661524913558611});
\begin{small}
\draw [color=black,fill=white] (0.3045405622080699,-0.3825481808152599) circle (1.8pt);
\draw[color=black] (-0.18,-0.55) node {$A_1$};
\draw [color=black,fill=white] (4.658026241780898,-1.9206059957541528) circle (1.8pt);
\draw[color=black] (4.4,-2.3) node {$A_4$};
\draw [color=black,fill=white] (6.966065475766681,-5.1972725195092835) circle (1.8pt);
\draw[color=black] (6.84,-5.55) node {$A_3$};
\draw [color=black,fill=white] (-1.7809615597429853,-4.657827530814911) circle (1.8pt);
\draw[color=black] (-1.6,-5.1) node {$A_2$};
\draw[color=black] (9,-3.15) node {$a_{41}$};
\draw[color=black] (-0.37,4.51) node {$a_{34}$};
\draw[color=black] (-5.960366415145039,-4.7) node {$a_{23}$};
\draw[color=black] (-2.33,-6.853180509863434) node {$a_{12}$};
\draw [color=black,fill=white] (3.819695756041697,-0.7304488277057958) circle (1.8pt);
\draw[color=red] (-4.3,-6.853180509863434) node {$g_2$};
\draw [color=black] (-0.20631366953795055,-4.754938916189722) circle (1.8pt);
\draw[color=red] (-.05,-6.853180509863434) node {$g_1$};
\draw [color=black,fill=white] (-0.14307309097018092,-1.3001561698306716) circle (1.8pt);
\draw[color=red] (-4.,0.5) node {$g_3$};
\draw [color=black,fill=white] (5.64191375989832,-5.115609686583716) circle (1.8pt);
\draw[color=red] (5.84,-6.853180509863434) node {$g_4$};
\draw [color=black,fill=white] (1.6425558445452941,2.360383147976042) circle (1.8pt);
\draw[color=black] (2.1,2.47) node {$A_5$};
\draw [color=black,fill=white] (15.405945915269998,-5.717775221717341) circle (1.8pt);
\draw[color=black] (-6.265506364372482,5.581272421154905) node {$A_6$};
\draw [fill=red] (-0.14307309097018092,-1.3001561698306716) circle (1.8pt);
\draw[color=red] (-.8,-1.3) node {$B_{123}$};
\draw [fill=red] (5.64191375989832,-5.115609686583715) circle (1.8pt);
\draw[color=red] (5.1,-5.4) node {$B_{234}$};
\draw [fill=red] (0.7697815094158698,3.5994394017003852) circle (1.8pt);
\draw[color=red] (0.14,3.547006092971945) node {$B_{341}$};
\draw [fill=red] (2.9719421533457675,-1.3249235932531227) circle (1.8pt);
\draw[color=red] (3.26,-1.7) node {$B_{412}$};
\draw [fill=red] (2.6829348253870227,4.49316005870158) circle (1.8pt);
\draw[color=red] (3.38,4.51) node {$B_{124}$};
\draw [fill=red] (-0.20631366953795052,-4.7549389161897215) circle (1.8pt);
\draw[color=red] (0.35,-5.16) node {$B_{231}$};
\draw [fill=red] (3.8196957560416975,-0.7304488277057959) circle (1.8pt);
\draw[color=red] (3.1,-0.6232398798031216) node {$B_{342}$};
\draw [fill=red] (-5.66281985156279,1.7256809474031736) circle (1.8pt);
\draw[color=red] (-5.248373200281004,2.1) node {$B_{413}$};
\draw [fill=blue] (2.5913800943825227,-2.7991397831097373) circle (1.8pt);
\draw[color=blue] (2.6,-3.2) node {$D_3$};
\draw [fill=blue] (-0.11207865409432947,-3.9483833806403825) circle (1.8pt);
\draw[color=blue] (0.3204308731198336,-4.2) node {$D_1$};
\draw [fill=blue] (2.279855748935079,-1.8102390218070805) circle (1.8pt);
\draw[color=blue] (1.72,-1.73) node {$D_2$};
\draw [fill=blue] (5.29516458311534,-3.9896020086597863) circle (1.8pt);
\draw[color=blue] (5.57,-3.7000677011798477) node {$D_4$};
\end{small}
\end{tikzpicture}

\caption{Lines and points in Theorem~\ref{thm:projective-quadrilateral}(\ref{diag}).}\label{fig:theorem8ii}
\end{center}
\end{figure}
%

\section{
Broadening classical results}\label{sec:constellations}

Condition (\ref{diag}) in Theorem~\ref{thm:projective-quadrilateral} is particularly interesting as it can be framed in various ways that give further insights to specific constellations. First, we can derive a statement about quadruples of points with the same cross-ratio:
\begin{corollary}\label{cor:crossratio}
Let $A_1,A_2,A_3,A_4$ and $B_1,B_2,B_3,B_4$ be two quadruples of collinear points. Then the following statements are equivalent:
\begin{enumerate}
\item $(A_1\times B_1)\times (A_2\times B_2),(A_3\times B_3)\times (A_4\times B_4),(A_1\times B_3)\times (A_4\times B_2),(A_1\times B_4)\times (A_3\times B_2),(A_2\times B_3)\times (A_4\times B_1),(A_2\times B_4)\times (A_3\times B_1)$ are collinear.\label{cr1}
\item $(A_1\times B_1)\times (A_3\times B_3),(A_2\times B_2)\times (A_4\times B_4),(A_1\times B_2)\times (A_4\times B_3),(A_1\times B_4)\times (A_2\times B_3),(A_2\times B_1)\times (A_3\times B_4),(A_3\times B_2)\times (A_4\times B_1)$ are collinear.\label{cr2}
\item $(A_1\times B_1)\times (A_4\times B_4),(A_2\times B_2)\times (A_3\times B_3),(A_1\times B_2)\times (A_3\times B_4),(A_1\times B_3)\times (A_2\times B_4),(A_2\times B_1)\times (A_4\times B_3),(A_3\times B_1)\times (A_4\times B_2)$ are collinear.\label{cr3}
\item $(A_1\times B_2)\times (A_2\times B_1),(A_2\times B_3)\times (A_3\times B_2),(A_3\times B_4)\times (A_4\times B_3),(A_4\times B_1)\times (A_1\times B_4),(A_1\times B_3)\times (A_3\times B_1),(A_2\times B_4)\times (A_4\times B_2)$ are collinear.\label{cr4}
\item $\begin{displaystyle}
\operatorname{CR}(A_1,A_2;A_3,A_4)=\operatorname{CR}(B_1,B_2;B_3,B_4)
\end{displaystyle}$\label{cr5}
\end{enumerate}
\end{corollary}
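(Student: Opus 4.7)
The keystone is the observation that condition (\ref{cr5}) is precisely the condition for the correspondence $A_i\mapsto B_i$ to extend to a projectivity $\pi\colon a\to b$. Moreover, the cross-ratio is invariant under the Klein four-group
\[
V_4=\{\mathrm{id},(1\,2)(3\,4),(1\,3)(2\,4),(1\,4)(2\,3)\},
\]
so (\ref{cr5}) is equally equivalent to the extendability of $A_i\mapsto B_{\sigma(i)}$ to a projectivity $\pi_\sigma\colon a\to b$ for \emph{any} $\sigma\in V_4$.

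First I would reinterpret (\ref{cr5}) through Theorem~\ref{thm:projective-quadrilateral}(\ref{diag}). Fix $\sigma\in V_4$ and consider the quadrilateral with vertices $V_1=A_1$, $V_2=B_{\sigma(1)}$, $V_3=A_2$, $V_4=B_{\sigma(2)}$; its two diagonals are precisely $a$ and $b$. Choose the harmonic-pencil lines
\[
g_1=A_1\times B_{\sigma(3)},\ g_2=B_{\sigma(1)}\times A_3,\ g_3=A_2\times B_{\sigma(4)},\ g_4=B_{\sigma(2)}\times A_4,
\]
so that the points $D_1,D_2,D_3,D_4$ in (\ref{diag}) become $B_{\sigma(3)},A_3,B_{\sigma(4)},A_4$ respectively. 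Grouping factors along each diagonal rewrites the left-hand side of (\ref{diag}) as
\[
\operatorname{CR}(A_1,A_2;A_3,A_4)\cdot\operatorname{CR}(B_{\sigma(1)},B_{\sigma(2)};B_{\sigma(4)},B_{\sigma(3)}),
\]
and setting this equal to $1$ gives exactly $\operatorname{CR}(A_1,A_2;A_3,A_4)=\operatorname{CR}(B_{\sigma(1)},B_{\sigma(2)};B_{\sigma(3)},B_{\sigma(4)})$, which is (\ref{cr5}) after $V_4$-invariance of the cross-ratio. Thus (\ref{cr5}) guarantees the existence of the projectivity $\pi_\sigma$ for every $\sigma\in V_4$.

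For (\ref{cr1})--(\ref{cr4}) I would invoke Steiner's cross-axis theorem, itself derivable from Theorem~\ref{thm:projective-quadrilateral}(\ref{diag}): for the projectivity $\pi_\sigma\colon a\to b$ with $\pi_\sigma(A_i)=B_{\sigma(i)}$, the six cross-intersections $(A_i\times B_{\sigma(j)})\times(A_j\times B_{\sigma(i)})$ with $\{i,j\}\subseteq\{1,2,3,4\}$ lie on a common line (the Pappus axis of $\pi_\sigma$). Writing these six points out for $\sigma=(1\,2)(3\,4),\,(1\,3)(2\,4),\,(1\,4)(2\,3),\,\mathrm{id}$ reproduces exactly the point-lists in (\ref{cr1}), (\ref{cr2}), (\ref{cr3}), (\ref{cr4}) respectively -- this is pure bookkeeping. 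Conversely, three of the six points in any of (\ref{cr1})--(\ref{cr4}) (those with indices drawn from a three-element subset of $\{1,2,3,4\}$) are automatically collinear by the three-pair Pappus theorem, and the collinearity of the remaining three then forces the fourth pair to be compatible with the projectivity determined by the first three, i.e., forces (\ref{cr5}).

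The hard part will be the converse direction, namely that the six-fold collinearity truly pins down the fourth pair. The cleanest way is to run the Step~1 embedding backwards: the four Pappus lines attached to the four three-element index subsets coincide precisely when (\ref{diag}) -- and hence (\ref{cr5}) -- holds. The remaining cases $(\ref{cr1})\Leftrightarrow(\ref{cr5})$, $(\ref{cr2})\Leftrightarrow(\ref{cr5})$, $(\ref{cr3})\Leftrightarrow(\ref{cr5})$ are then obtained from $(\ref{cr4})\Leftrightarrow(\ref{cr5})$ by relabeling the $B$'s according to the appropriate Klein-four element.
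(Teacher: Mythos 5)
Your proposal is correct, but it reaches the six-point collinearities by a genuinely different route than the paper. The point of contact is your Step~1: embedding the cross-ratio condition into Theorem~\ref{thm:projective-quadrilateral}\,(\ref{diag}) via the quadrilateral $A_1B_{\sigma(1)}A_2B_{\sigma(2)}$ with $g_1=A_1\times B_{\sigma(3)}$, $g_2=B_{\sigma(1)}\times A_3$, $g_3=A_2\times B_{\sigma(4)}$, $g_4=B_{\sigma(2)}\times A_4$ is, for $\sigma=\mathrm{id}$, exactly the paper's opening move, and your computation that (\ref{diag}) collapses to $\operatorname{CR}(A_1,A_2;A_3,A_4)=\operatorname{CR}(B_{\sigma(1)},B_{\sigma(2)};B_{\sigma(3)},B_{\sigma(4)})$ is right. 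From there the paper never leaves Theorem~\ref{thm:projective-quadrilateral}: it applies (\ref{diag}) to four permuted configurations, obtains four collinear triples each equivalent to a $V_4$-permuted (hence identical) cross-ratio identity, and glues them into the six-point collinearity using the pairwise overlaps of the triples; conditions (\ref{cr2})--(\ref{cr4}) then follow by relabeling. You instead recognize each of the four lines as the cross-axis of the projectivity $A_i\mapsto B_{\sigma(i)}$ and invoke Steiner's cross-axis theorem; your bookkeeping matching $\sigma=(1\,2)(3\,4),(1\,3)(2\,4),(1\,4)(2\,3),\mathrm{id}$ to the lists (\ref{cr1})--(\ref{cr4}) checks out, and your converse via ``running the embedding backwards'' is sound because (\ref{diag}) is an equivalence, so collinearity of a single embedded triple already forces (\ref{cr5}) (your earlier, vaguer converse --- that the remaining points force the fourth pair to be compatible with the projectivity determined by the first three --- is dispensable and has degenerate cases; drop it in favour of the second). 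What your route buys is a conceptual explanation of where the four lines come from: they are the axes of the four projectivities permitted by the $V_4$-invariance of the cross-ratio. What it costs is self-containedness: the cross-axis theorem restricted to four pairs is precisely the implication (\ref{cr5})$\Rightarrow$(\ref{cr4}) you are proving, so citing it makes the forward direction a repackaging of a classical fact rather than a consequence of Theorem~\ref{thm:projective-quadrilateral}, and your parenthetical claim that the cross-axis theorem is itself derivable from (\ref{diag}) is true but, once written out, essentially reproduces the paper's gluing argument. One harmless wrinkle worth noting: the quadrilateral built from $\sigma$ produces a triple belonging to the list indexed by $\sigma\cdot(1\,2)(3\,4)$ rather than by $\sigma$ --- which is why the paper's $\sigma=\mathrm{id}$ configuration yields points of (\ref{cr1}) --- but since all four cross-ratio identities coincide under $V_4$ this does not affect the logic.
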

\begin{proof}
By applying Theorem~\ref{thm:projective-quadrilateral}\,(\ref{diag}) to the quadrilateral $A_1B_1A_2B_2$ and the lines $g_1=A_1\times B_3$, $g_2=B_1\times A_3$, $g_3=A_2\times B_4$, $g_4=B_2\times A_4$ 
we get that
\begin{equation}\label{eq:crossratio1}
\frac{A_1 A_3}{A_3 A_2}\cdot \frac{B_1 B_4}{B_4 B_2}\cdot \frac{A_2 A_4}{A_4 A_1}\cdot \frac{B_2 B_3}{B_3 B_1}=1,
\end{equation}
or equivalently, $\operatorname{CR}(A_1,A_2;A_3,A_4)=\operatorname{CR}(B_1,B_2;B_3,B_4)$, is equivalent to the collinearity of the points
$$(A_1\times B_1)\times (A_2\times B_2),\quad (A_1\times B_3)\times(A_4\times B_2),\quad (A_2\times B_4)\times(A_3\times B_1)$$
(see pair $\ell^{(2)}$ in Theorem~\ref{thm:projective-quadrilateral}).

Analogously, $\operatorname{CR}(A_3,A_4;A_1,A_2)=\operatorname{CR}(B_3,B_4;B_1,B_2)$ is equivalent to the col\-lin\-e\-ar\-i\-ty of the points
$$(A_3\times B_3)\times (A_4\times B_4),\quad (A_3\times B_1)\times(A_2\times B_4),\quad (A_4\times B_2)\times(A_1\times B_3),$$
$\operatorname{CR}(A_1,A_2;A_4,A_3)=\operatorname{CR}(B_1,B_2;B_4,B_3)$ is equivalent to the collinearity of the points
$$(A_1\times B_1)\times (A_2\times B_2),\quad (A_1\times B_4)\times(A_3\times B_2),\quad (A_2\times B_3)\times(A_4\times B_1)$$
and $\operatorname{CR}(A_3,A_4;A_2,A_1)=\operatorname{CR}(B_3,B_4;B_2,B_1)$ is equivalent to the collinearity of the points
$$(A_3\times B_3)\times (A_4\times B_4),\quad (A_3\times B_2)\times(A_1\times B_4),\quad (A_4\times B_1)\times(A_2\times B_3).$$
Since all the cross-ratio equations are in fact equivalent and because the triples of points have sufficiently many common points, it follows that
\begin{equation}\label{eq:crossratio2}
\operatorname{CR}(A_1,A_2;A_3,A_4)=\operatorname{CR}(B_1,B_2;B_3,B_4)
\end{equation}
is equivalent to the collinearity of all those points, which proves the equivalence of (\ref{cr1}) and (\ref{cr5}).

Now (\ref{cr2}), (\ref{cr3}) and (\ref{cr4}) follow by noting that
\begin{equation}\label{eq:crossratio5}
\operatorname{CR}(A_1,A_3;A_2,A_4)=\operatorname{CR}(B_1,B_3;B_2,B_4),
\end{equation}
\begin{equation}\label{eq:crossratio3}
\operatorname{CR}(A_1,A_4;A_2,A_3)=\operatorname{CR}(B_1,B_4;B_2,B_3)
\end{equation}
and
\begin{equation}\label{eq:crossratio4}
\operatorname{CR}(A_1,A_2;A_3,A_4)=\operatorname{CR}(B_2,B_1;B_4,B_3)
\end{equation}
are all equivalent to (\ref{eq:crossratio2}) and taking the corresponding intersection points.
\end{proof}
Note that the ordinary Pappus Theorem follows from Corollary~\ref{cor:crossratio} by
letting $A_1$ and $A_3$ as well as $B_1$ and $B_3$ coincide:
This implies $\operatorname{CR}(A_1,A_2;A_3,A_4)=0=\operatorname{CR}(B_1,B_2;B_3,B_4)$ hence~(\ref{cr4}) in Corollary~\ref{cor:crossratio}
will always be true regardless of the constellation of $A_1$, $A_2$, $A_4$, $B_1$, $B_2$, $B_4$.

Interestingly, Corollary~\ref{cor:crossratio}\,(\ref{cr4}) is a generalization of Pappus' 
Theorem with four points on each of two lines:
\begin{corollary}\label{cor-pappus}
Let $A_1,\ldots,A_4$ and $B_1,\ldots,B_4$ be collinear points, respectively.
By avoiding two of the points $A_i,B_i$, the remaining six points
form a hexagon $H_i$ with sides $A_j\times B_k$, $i\neq j\neq k\neq i$.
The intersections of opposite sides of the hexagon $H_i$ lie on a Pappus line $p_i$.
Then, two of these Pappus lines (and hence all four) coincide if and only if 
$\operatorname{CR}(A_1,A_2;A_3,A_4)=\operatorname{CR}(B_1,B_2;B_3,B_4)$.
\end{corollary}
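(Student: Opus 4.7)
The plan is to recognise each Pappus line as carrying a triple of the six ``pair points'' of Corollary~\ref{cor:crossratio}\,(\ref{cr4}), and then invoke that corollary. For $\{i,j\}\subset\{1,2,3,4\}$ I set
$$P_{ij}:=(A_i\times B_j)\times(A_j\times B_i).$$
Applying Pappus' theorem to the hexagon $H_i$ (whose vertices alternate between the triples $\{A_a,A_b,A_c\}$ and $\{B_a,B_b,B_c\}$ with $\{a,b,c\}=\{1,2,3,4\}\setminus\{i\}$) identifies $p_i$ as the unique line through the three pair points $P_{ab}$ whose index set avoids $i$.

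With $\{i,j,m,n\}=\{1,2,3,4\}$ this yields the incidence table
\begin{alignat*}{2}
&p_i:\ P_{jm},P_{jn},P_{mn},\qquad && p_j:\ P_{im},P_{in},P_{mn},\\
&p_m:\ P_{ij},P_{in},P_{jn},\qquad && p_n:\ P_{ij},P_{im},P_{jm}.
\end{alignat*}
Thus the four Pappus lines and six pair points form a complete quadrilateral: each $P_{ij}$ lies on exactly two Pappus lines, and any two Pappus lines meet in exactly one pair point. If $p_i=p_j$ for some $i\neq j$, their common line already carries the five points $P_{im},P_{in},P_{jm},P_{jn},P_{mn}$; since $p_m$ passes through $P_{in}$ and $P_{jn}$ -- both lying on $p_i=p_j$ -- the line $p_m$ must coincide with $p_i=p_j$ whenever $P_{in}\neq P_{jn}$, and analogously $p_n=p_i=p_j$. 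Hence all four Pappus lines coincide, which puts all six pair points on one line, and by Corollary~\ref{cor:crossratio}\,(\ref{cr4})$\iff$(\ref{cr5}) this is equivalent to $\operatorname{CR}(A_1,A_2;A_3,A_4)=\operatorname{CR}(B_1,B_2;B_3,B_4)$. Conversely, if this cross-ratio equation holds, the same corollary places all six pair points on one line, and then each $p_i$ -- being the unique line through three of them -- must equal that common line, so all four Pappus lines coincide.

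The argument is essentially incidence bookkeeping once Corollary~\ref{cor:crossratio} is in hand; the only delicate step will be the genericity assumption $P_{in}\neq P_{jn}$ needed to deduce $p_m=p_i$ from a two-point intersection, which can be ensured by continuity/perturbation or by handling the degenerate merging of pair points as a separate subcase.
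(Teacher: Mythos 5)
Your proof is correct and follows exactly the route the paper intends: the paper states Corollary~\ref{cor-pappus} as an immediate reformulation of Corollary~\ref{cor:crossratio}\,(\ref{cr4})$\iff$(\ref{cr5}), and you have simply made explicit the incidence bookkeeping (each $p_i$ carries the three pair points $P_{ab}$ avoiding $i$, so the four Pappus lines and six pair points form a complete quadrilateral) that turns ``two Pappus lines coincide'' into ``all six points of (\ref{cr4}) are collinear.'' The genericity caveat you flag ($P_{in}\neq P_{jn}$) is at the same level of implicit general-position assumption the paper uses throughout, so no gap.
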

Figure~\ref{fig-pappus} illustrates the previous corollary.
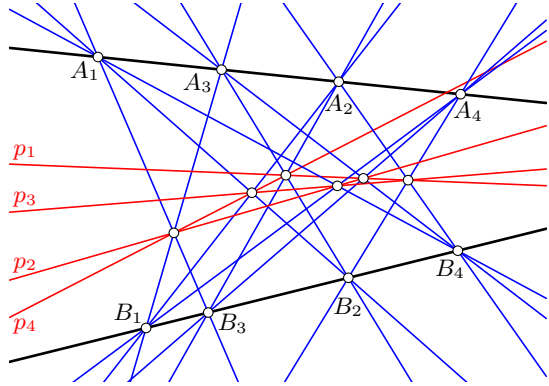
\begin{figure}[h]
\begin{center}
\definecolor{red}{rgb}{1.,0.,0.}
\begin{tikzpicture}[line cap=round,line join=round,>=triangle 45,x=18,y=18]
\clip(-3.06,-4.06) rectangle (8.1,3.86);
\draw [line width=1pt,domain=-3.06:8.1] plot(\x,{(--19.708-0.78*\x)/7.54});
\draw [line width=1pt,domain=-3.06:8.1] plot(\x,{(-18.6948--1.62*\x)/6.48});
\draw [line width=.6pt,color=blue,domain=-3.06:8.1] plot(\x,{(-3.4208919169108185--5.414270533627416*\x)/1.568718174934983});
\draw [line width=.6pt,color=blue,domain=-3.06:8.1] plot(\x,{(-0.2538356422682355-5.356866637296243*\x)/2.2925334508150295});
\draw [line width=.6pt,color=blue,domain=-3.06:8.1] plot(\x,{(-10.639122278039304--5.1622373506271195*\x)/4.005038943937847});
\draw [line width=.6pt,color=blue,domain=-3.06:8.1] plot(\x,{(-8.615635911636982--4.628667002328655*\x)/-5.20533199068538});
\draw [line width=.6pt,color=blue,domain=-3.06:8.1] plot(\x,{(--12.408069023716154-4.362937535956071*\x)/2.6366138157503967});
\draw [line width=.6pt,color=blue,domain=-3.06:8.1] plot(\x,{(-12.288366027456108--4.839103987923362*\x)/2.712505493122818});
\draw [line width=.6pt,color=red,domain=-3.06:8.1] plot(\x,{(-2.6354370420015356--1.207852418136228*\x)/2.328328221666017});
\draw [line width=.6pt,color=blue,domain=-3.06:8.1] plot(\x,{(-18.640762711309712--4.576866637296243*\x)/5.247466549184971});
\draw [line width=.6pt,color=blue,domain=-3.06:8.1] plot(\x,{(-17.269241531421798--3.794270533627416*\x)/-4.911281825065017});
\draw [line width=.6pt,color=blue,domain=-3.06:8.1] plot(\x,{(-19.747626156460445--3.848667002328655*\x)/2.3346680093146204});
\draw [line width=.6pt,color=blue,domain=-3.06:8.1] plot(\x,{(-18.907457220923725--3.5422373506271194*\x)/-2.4749610560621527});
\draw [line width=.6pt,color=red,domain=-3.06:8.1] plot(\x,{(-0.3443569828272115--0.03772342484086216*\x)/-0.9294685796775379});
\draw [line width=.6pt,color=blue,domain=-3.06:8.1] plot(\x,{(-18.1496--4.9*\x)/6.54});
\draw [line width=.6pt,color=blue,domain=-3.06:8.1] plot(\x,{(-15.542--4.06*\x)/-7.48});
\draw [line width=.6pt,color=red,domain=-3.06:8.1] plot(\x,{(-0.5686031265763045--0.15701037509436988*\x)/0.5414882727956654});
\draw [line width=.6pt,color=red,domain=-3.06:8.1] plot(\x,{(--0.4717864255201903-0.14407743498201153*\x)/-1.7766544439534826});
\begin{small}
\draw [fill=white] (-1.22,2.74) circle (1.8pt);
\draw[color=black] (-1.49,2.45) node {$A_1$};
\draw [fill=white] (6.32,1.96) circle (1.8pt);
\draw[color=black] (6.47,1.6) node {$A_4$};
\draw [fill=white] (-0.22,-2.94) circle (1.8pt);
\draw[color=black] (-0.59,-2.64) node {$B_1$};
\draw [fill=white] (6.26,-1.32) circle (1.8pt);
\draw[color=black] (6.1,-1.7) node {$B_4$};
\draw [fill=white] (1.348718174934983,2.474270533627416) circle (1.8pt);
\draw[color=black] (0.85,2.2) node {$A_3$};
\draw [fill=white] (3.785038943937847,2.222237350627119) circle (1.8pt);
\draw[color=black] (3.8,1.75) node {$A_2$};
\draw [fill=white] (1.0725334508150293,-2.616866637296243) circle (1.8pt);
\draw[color=black] (1.58,-2.84) node {$B_3$};
\draw [fill=white] (3.98533199068538,-1.888667002328655) circle (1.8pt);
\draw[color=black] (3.98,-2.5) node {$B_2$};
\draw [fill=white] (0.35762780479019834,-0.9463766374039241) circle (1.8pt);
\draw [fill=white] (2.6859560264562155,0.2614757807323039) circle (1.8pt);
\draw[color=red] (-2.73,-2.9) node {$p_4$};
\draw [fill=white] (1.9794715219602823,-0.10502281220258479) circle (1.8pt);
\draw [fill=white] (5.227082818386968,0.15834157303293445) circle (1.8pt);
\draw [fill=white] (4.29761423870943,0.1960649978737966) circle (1.8pt);
\draw[color=red] (-2.73,0.75) node {$p_1$};
\draw [fill=white] (3.756125965913765,0.03905462277942673) circle (1.8pt);
\draw[color=red] (-2.73,-1.62) node {$p_2$};
\draw[color=red] (-2.73,-0.25) node {$p_3$};
\end{small}
\end{tikzpicture}
\caption{The generalized Pappus configuration: The Pappus lines $p_i$ coincide if and only if $\operatorname{CR}(A_1,A_2;A_3,A_4)=\operatorname{CR}(B_1,B_2;B_3,B_4)$.}\label{fig-pappus}
\end{center}
\end{figure}

A weaker form of this statement is Lemma 3.8 in \cite{hh}.

We get another interesting way of putting Theorem~\ref{thm:projective-quadrilateral}\,(\ref{diag}) by focussing on the two triangles $A_1A_4(g_1\times g_4)$ and $A_2A_3(g_2\times g_3)$:

\begin{corollary}\label{cor:triangles}
Let $A_1B_1C_1$ and $A_2B_2C_2$ be two triangles. Then the following statements are equivalent:
\begin{enumerate}
\item $A_1B_1C_1$ and $A_2B_2C_2$ are perspective from a point.\label{tr1}
\item $A_1B_1C_1$ and $A_2B_2C_2$ are perspective from a line.\label{tr2}
\item $\begin{displaystyle}
\operatorname{CR}(A_1,B_2;A_1',B_2')=\operatorname{CR}(A_2,B_1;A_2',B_1')
\end{displaystyle}$
where $A_1'=(A_1\times B_2)\times (B_1\times C_1)$, $B_1'=(A_2\times B_1)\times (A_1\times C_1)$, $A_2'=(A_2\times B_1)\times (B_2\times C_2)$, $B_2'=(A_1\times B_2)\times (A_2\times C_2)$ (see Figure~\ref{fig:desargues}).
\label{tr3}
\item $\begin{displaystyle}
\operatorname{CR}(B_1,C_2;B_1',C_2')=\operatorname{CR}(B_2,C_1;B_2',C_1')
\end{displaystyle}$
where $B_1'=(B_1\times C_2)\times (C_1\times A_1)$, $C_1'=(B_2\times C_1)\times (B_1\times A_1)$, $B_2'=(B_2\times C_1)\times (C_2\times A_2)$, $C_2'=(B_1\times C_2)\times (B_2\times A_2)$.
\label{tr4}
\item $\begin{displaystyle}
\operatorname{CR}(C_1,A_2;C_1',A_2')=\operatorname{CR}(C_2,A_1;C_2',A_1')
\end{displaystyle}$
where $C_1'=(C_1\times A_2)\times (A_1\times B_1)$, $A_1'=(C_2\times A_1)\times (C_1\times B_1)$, $C_2'=(C_2\times A_1)\times (A_2\times B_2)$, $A_2'=(C_1\times A_2)\times (C_2\times B_2)$.
\label{tr5}
\end{enumerate}
\end{corollary}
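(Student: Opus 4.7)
The equivalence $(\ref{tr1})\Leftrightarrow(\ref{tr2})$ is classical Desargues' Theorem, so the real work is to connect these to the three cross-ratio conditions $(\ref{tr3})$--$(\ref{tr5})$. The cyclic relabelling $(A,B,C)\mapsto(B,C,A)\mapsto(C,A,B)$ applied simultaneously to both triangles leaves $(\ref{tr1})$ and $(\ref{tr2})$ invariant and carries $(\ref{tr3})$ to $(\ref{tr4})$ and then to $(\ref{tr5})$; hence these three conditions are equivalent to one another, and it suffices to prove $(\ref{tr1})\Leftrightarrow(\ref{tr3})$.

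The plan is to realize the two given triangles as the pair $A_1A_4(g_1\times g_4)$, $A_2A_3(g_2\times g_3)$ that appears in the proof of Theorem~\ref{thm:projective-quadrilateral}. Concretely, I set up the configuration of that theorem with $A_1:=A_1$, $A_4:=B_1$, $A_2:=A_2$, $A_3:=B_2$, and choose
$g_1:=A_1\times C_1$, $g_2:=A_2\times C_2$, $g_3:=B_2\times C_2$, $g_4:=B_1\times C_1$,
defining each $h_i$ as the harmonic conjugate of $g_i$ with respect to the two sides of the quadrilateral meeting at $A_i$ so that all four required harmonic pencils are present. Then $g_1\times g_4=C_1$ and $g_2\times g_3=C_2$, so the two triangles of Theorem~\ref{thm:projective-quadrilateral} coincide with $A_1B_1C_1$ and $A_2B_2C_2$, and $A_5=(A_1\times A_2)\times(B_1\times B_2)$ is the candidate perspector of $(\ref{tr1})$.

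Under this dictionary, the triangles are perspective from a point exactly when $C_1\times C_2$ passes through $A_5$, which is precisely the condition $\ell^{(2)}_1=\ell^{(2)}_2$ of Theorem~\ref{thm:projective-quadrilateral} (since $\ell^{(2)}_1$ joins $A_5$ to $g_1\times g_4=C_1$ and $\ell^{(2)}_2$ joins $A_5$ to $g_2\times g_3=C_2$). By that theorem this is equivalent to condition~(\ref{diag}). A direct intersection check shows that the four points $D_1,D_2,D_3,D_4$ defined there become exactly $B_1',B_2',A_2',A_1'$ of $(\ref{tr3})$; for instance $D_1=g_1\times(A_2\times A_4)=(A_1\times C_1)\times(A_2\times B_1)=B_1'$, and the other three identifications are analogous.

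It then remains to rewrite condition~(\ref{diag}) as the cross-ratio equality in $(\ref{tr3})$. The four factors split naturally into two pairs: two involve the collinear points $A_1,A_1',B_2,B_2'$ on the line $A_1\times B_2$, the other two involve $A_2,A_2',B_1,B_1'$ on the line $A_2\times B_1$. Each pair is a cross-ratio up to a reordering of its third and fourth arguments (handled by $\operatorname{CR}(A,B;C,D)\cdot\operatorname{CR}(A,B;D,C)=1$), whence the product-equals-one condition becomes $\operatorname{CR}(A_1,B_2;A_1',B_2')=\operatorname{CR}(A_2,B_1;A_2',B_1')$, which is $(\ref{tr3})$. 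The main obstacle here is purely bookkeeping of signed ratios in this last rearrangement; once the configurations are matched up as above, no further geometric idea is needed.
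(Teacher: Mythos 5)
Your proposal is correct and follows exactly the route the paper intends: the corollary is stated right after the remark that it is Theorem~\ref{thm:projective-quadrilateral}\,(\ref{diag}) ``focussing on the two triangles $A_1A_4(g_1\times g_4)$ and $A_2A_3(g_2\times g_3)$,'' which is precisely your dictionary, and your identification of the points $D_1,D_2,D_3,D_4$ with $B_1',B_2',A_2',A_1'$ and the sign bookkeeping turning the product condition into the equality of cross-ratios both check out. The only cosmetic difference is that you import classical Desargues for (\ref{tr1})$\Leftrightarrow$(\ref{tr2}), whereas the same dictionary also yields it from the theorem itself, since $\ell^{(4)}_1=\ell^{(4)}_2$ translates precisely into perspectivity from the line through $A_6=(A_1\times B_1)\times(A_2\times B_2)$, $g_1\times g_2=(A_1\times C_1)\times(A_2\times C_2)$ and $g_3\times g_4=(B_1\times C_1)\times(B_2\times C_2)$.
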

So Corollary~\ref{cor:triangles}  is Desargues' Theorem with an additional quantitative equivalence that tells us when the perspectives occur.

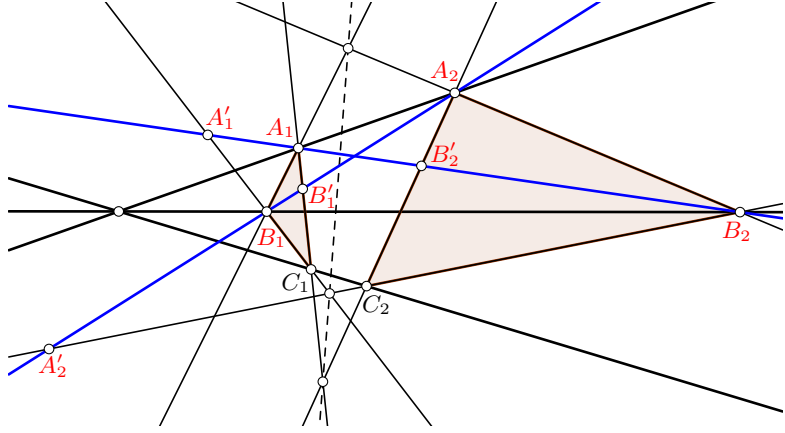
\begin{figure}[h]
\begin{center}
\definecolor{zzttqq}{rgb}{0.6,0.2,0.}
\begin{tikzpicture}[line cap=round,line join=round,,x=13,y=13]
\clip(-1.965276314819155,-4.83769698210485) rectangle (20.313369277346702,7.443775919728432);
\fill[line width=1pt,color=zzttqq,fill=zzttqq,fill opacity=0.10000000149011612] (6.37152802098225,3.219876948267131) -- (5.459171355371053,1.37532261197722) -- (6.737670460655945,-0.3021748405983691) -- cycle;
\fill[line width=1pt,color=zzttqq,fill=zzttqq,fill opacity=0.10000000149011612] (10.873865988440352,4.8216758374976845) -- (8.333479592182433,-0.786932825119719) -- (19.08458455180289,1.3603592918445258) -- cycle;
\draw [line width=1pt,domain=-1.965276314819155:20.313369277346702] plot(\x,{(--6.538848349918393--2.440865697682701*\x)/6.860787816766401});
\draw [line width=1pt,domain=-1.965276314819155:20.313369277346702] plot(\x,{(--12.295391280805697-0.009775230849952798*\x)/8.901203625931707});
\draw [line width=1pt,domain=-1.965276314819155:20.313369277346702] plot(\x,{(--15.7356-2.74*\x)/9.02});
\draw [line width=1pt,color=zzttqq] (6.37152802098225,3.219876948267131)-- (5.459171355371053,1.37532261197722);
\draw [line width=1pt,color=zzttqq] (5.459171355371053,1.37532261197722)-- (6.737670460655945,-0.3021748405983691);
\draw [line width=1pt,color=zzttqq] (6.737670460655945,-0.3021748405983691)-- (6.37152802098225,3.219876948267131);
\draw [line width=1pt,color=zzttqq] (10.873865988440352,4.8216758374976845)-- (8.333479592182433,-0.786932825119719);
\draw [line width=1pt,color=zzttqq] (8.333479592182433,-0.786932825119719)-- (19.08458455180289,1.3603592918445258);
\draw [line width=1pt,color=zzttqq] (19.08458455180289,1.3603592918445258)-- (10.873865988440352,4.8216758374976845);
\draw [line width=.6pt,domain=-1.965276314819155:20.313369277346702] plot(\x,{(--23.61978526539481-3.5220517888655*\x)/0.36614243967369475});
\draw [line width=.6pt,domain=-1.965276314819155:20.313369277346702] plot(\x,{(-10.916094770699523--1.677497452575589*\x)/-1.278499105284892});
\draw [line width=.6pt,domain=-1.965276314819155:20.313369277346702] plot(\x,{(-8.814953443696128--1.8445543362899108*\x)/0.9123566656111972});
\draw [line width=.6pt,domain=-1.965276314819155:20.313369277346702] plot(\x,{(--48.73833927416268-5.608608662617403*\x)/-2.5403863962579187});
\draw [line width=.6pt,domain=-1.965276314819155:20.313369277346702] plot(\x,{(-26.354812434208498--2.1472921169642447*\x)/10.751104959620458});
\draw [line width=.6pt,domain=-1.965276314819155:20.313369277346702] plot(\x,{(-77.22731556646258--3.4613165456531587*\x)/-8.210718563362539});
\draw [line width=.6pt,dash pattern=on 3pt off 3pt,domain=-1.965276314819155:20.313369277346702] plot(\x,{(-18.81929707206329--2.562447707569367*\x)/0.19259416893938575});
\draw [line width=1pt,color=blue,domain=-1.965276314819155:20.313369277346702] plot(\x,{(--52.78244651901416-1.859517656422605*\x)/12.71305653082064});
\draw [line width=1pt,color=blue,domain=-1.965276314819155:20.313369277346702] plot(\x,{(--11.367280843440051-3.4463532255204647*\x)/-5.414694633069299});
\begin{small}
\draw [fill=white] (1.2,1.38) circle (1.8pt);
\draw [fill=white] (6.37152802098225,3.219876948267131) circle (1.8pt);
\draw[color=red] (5.85,3.8) node {$A_1$};
\draw [fill=white] (5.459171355371053,1.37532261197722) circle (1.8pt);
\draw[color=red] (5.6,0.6177494053615746) node {$B_1$};
\draw [fill=white] (6.737670460655945,-0.3021748405983691) circle (1.8pt);
\draw[color=black] (6.3,-0.7) node {$C_1$};
\draw [fill=white] (10.873865988440352,4.8216758374976845) circle (1.8pt);
\draw[color=red] (10.59,5.45) node {$A_2$};
\draw [fill=white] (8.333479592182433,-0.786932825119719) circle (1.8pt);
\draw[color=black] (8.62,-1.33) node {$C_2$};
\draw [fill=white] (19.08458455180289,1.3603592918445258) circle (1.8pt);
\draw[color=red] (18.99,0.8) node {$B_2$};
\draw [fill=white] (7.076548205504936,-3.5619580236562456) circle (1.8pt);
\draw [fill=white] (7.269142374444321,-0.9995103160868789) circle (1.8pt);
\draw [fill=white] (7.803933674768116,6.115838757508587) circle (1.8pt);
\draw [fill=white] (3.7625015354298275,3.601494917494062) circle (1.8pt);
\draw[color=red] (4.121711895826884,4.1) node {$A_1'$};
\draw [fill=white] (6.494760965201638,2.034456290607596) circle (1.8pt);
\draw[color=red] (7.06,1.8270848114501907) node {$B_1'$};
\draw [fill=white] (-0.8059898376906127,-2.6123368867218586) circle (1.8pt);
\draw[color=red] (-0.6887556083922814,-3.0908791733101824) node {$A_2'$};
\draw [fill=white] (9.913668838417243,2.7017738960556272) circle (1.8pt);
\draw[color=red] (10.59,3.04) node {$B_2'$};
\end{small}
\end{tikzpicture}
\caption{Quantitative version of Desargue's Theorem in Corollary~\ref{cor:triangles}(\ref{tr3}).}\label{fig:desargues}
\end{center}
\end{figure}

Next, we want to demonstrate how we can use Theorem~\ref{thm:projective-quadrilateral}\,(\ref{diag}) to expand the special case $n=4$ of the generalized Ceva's Theorem for $n$-gons which can be found in~\cite[Theorem 2]{gruenbaum95}. For $n=4$, the theorem reads as follows:
\begin{theorem}[Ceva's Theorem for quadrilaterals]\label{thm:ceva4}
Let $A_1,A_2,A_3,A_4$ be the vertices of a quadrilateral and $P$ a given point. For $i\in\{1,2,3,4\}$ let $D_i=(A_i\times P)\times(A_{i-1}\times A_{i+1})$. Then,
\begin{equation}\label{eq:ceva4}
\frac{A_1D_2}{D_2 A_3}\cdot \frac{A_2D_3}{D_3 A_4}\cdot \frac{A_3D_4}{D_4 A_1}\cdot \frac{A_4D_1}{D_1 A_2}=1.
\end{equation}
\end{theorem}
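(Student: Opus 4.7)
My plan is to deduce Theorem~\ref{thm:ceva4} as a direct consequence of Theorem~\ref{thm:projective-quadrilateral}\,(\ref{diag}). The identity~(\ref{eq:ceva4}) is literally the identity in condition~(\ref{diag}) of Theorem~\ref{thm:projective-quadrilateral} once we set $g_i := A_i\times P$ for $i\in\{1,2,3,4\}$: with this choice, the points $D_i=g_i\times(A_{i-1}\times A_{i+1})$ featuring there coincide factor by factor with the points $D_i=(A_i\times P)\times(A_{i-1}\times A_{i+1})$ defined in the present theorem, and the two products of oriented ratios agree.

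To invoke Theorem~\ref{thm:projective-quadrilateral} I first complete the data to harmonic pencils by defining $h_i$ as the harmonic conjugate of $g_i$ with respect to the two sides $a_{(i-1)i}$ and $a_{i(i+1)}$. This produces the required harmonic pencils $(a_{(i-1)i}a_{i(i+1)};g_ih_i)$ at every vertex $A_i$, so Theorem~\ref{thm:projective-quadrilateral} applies.

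The heart of the proof is then to exhibit the coincidence of one of the four line pairs $\ell^{(i)}$, which by Theorem~\ref{thm:projective-quadrilateral} is equivalent to condition~(\ref{diag}). This is essentially free from the concurrency hypothesis: since $g_1,g_2,g_3,g_4$ all pass through $P$, the intersections $g_1\times g_4$ and $g_2\times g_3$ both equal $P$. Consequently $\ell^{(2)}_1$ (through $A_5$ and $g_1\times g_4$) and $\ell^{(2)}_2$ (through $A_5$ and $g_2\times g_3$) are both equal to the line $A_5\times P$ and therefore coincide. Theorem~\ref{thm:projective-quadrilateral} then delivers exactly the identity~(\ref{eq:ceva4}).

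There is no real obstacle in this plan; the only technicality is genericity. The harmonic conjugates $h_i$ require $g_i$ to be distinct from the adjacent sides, and the line $A_5\times P$ requires $P\ne A_5$. Both conditions hold whenever $P$ does not lie on any side of the quadrilateral, and the remaining degenerate positions of $P$ can be handled as limits since~(\ref{eq:ceva4}) is a polynomial identity in the coordinates of $A_1,\ldots,A_4$ and $P$.
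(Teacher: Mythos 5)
Your proposal is correct: with $g_i=A_i\times P$ the points $D_i$ of Theorem~\ref{thm:ceva4} are exactly those of Theorem~\ref{thm:projective-quadrilateral}\,(\ref{diag}), the harmonic conjugates $h_i$ can always be adjoined, and $g_1\times g_4=g_2\times g_3=P$ forces $\ell^{(2)}_1=\ell^{(2)}_2=A_5\times P$, so the equivalence in Theorem~\ref{thm:projective-quadrilateral} yields~(\ref{eq:ceva4}); your genericity caveats ($P$ on no side, $P\neq A_5$, plus a limiting argument) are the right ones and there is no circularity, since the proof of Theorem~\ref{thm:projective-quadrilateral} uses only the triangle versions of Ceva and Menelaos. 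However, this is a genuinely different route from the paper's: the paper does not prove Theorem~\ref{thm:ceva4} at all but quotes it from Gr\"unbaum--Shephard~\cite{gruenbaum95}, where it follows in two lines from the area principle~(\ref{eq:areapr}) by a telescoping product, namely $\frac{A_1D_2}{D_2A_3}=\frac{[A_1A_2P]}{[A_2A_3P]}$ and cyclic analogues, whose product is manifestly $1$ with no genericity issues beyond the ratios being defined. What your argument buys is conceptual rather than economical: it makes explicit why Theorem~\ref{thm:ceva4full} (whose conditions~(\ref{ceva4full2}) and~(\ref{ceva4full3}) are trivially satisfied for concurrent $g_i$) is a generalization of Theorem~\ref{thm:ceva4}, which is precisely the remark the paper makes after Theorem~\ref{thm:ceva4full}; the cost is invoking the full strength of Theorem~\ref{thm:projective-quadrilateral} and patching degenerate configurations by continuity, where the area-principle proof is uniform and elementary.
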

Note that the implication is only one-sided. But now Theorem~\ref{thm:projective-quadrilateral}\,(\ref{diag}) tells us how we can generalize Theorem~\ref{thm:ceva4} to get an equivalence:

\begin{theorem}[Two-sided Ceva's Theorem for quadrilaterals]\label{thm:ceva4full}
Let $A_1,A_2,A_3,A_4$ be the vertices of a quadrilateral and let $g_1,g_2,g_3,g_4$ be lines such that for each $i\in\{1,2,3,4\}$, $g_i$ passes through $A_i$. Then the following statements are equivalent:
\begin{enumerate}
\item $\begin{displaystyle}
\frac{A_1D_2}{D_2 A_3}\cdot \frac{A_2D_3}{D_3 A_4}\cdot \frac{A_3D_4}{D_4 A_1}\cdot \frac{A_4D_1}{D_1 A_2}=1
\end{displaystyle}$\\
where $D_i=g_i\times(A_{i-1}\times A_{i+1})$ for $i\in\{1,2,3,4\}$.\label{ceva4full1}
\item $g_1\times g_2, g_3\times g_4,(A_1\times A_4)\times(A_2\times A_3)$ are collinear (see Figure~\ref{fig:cevafull}).\label{ceva4full2}
\item $g_1\times g_4, g_2\times g_3,(A_1\times A_2)\times(A_3\times A_4)$ are collinear.\label{ceva4full3}
\end{enumerate}
\end{theorem}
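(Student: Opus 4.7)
The plan is to view Theorem~\ref{thm:ceva4full} as a direct corollary of Theorem~\ref{thm:projective-quadrilateral}\,(\ref{diag}) together with the collinearities from Proposition~\ref{prop:free-quadrilateral}. The four lines $g_i$ do not yet come with harmonic partners, so I would begin by defining $h_i$ to be the harmonic conjugate of $g_i$ with respect to the two sides $a_{(i-1)i}$ and $a_{i(i+1)}$ meeting at $A_i$. This produces a harmonic pencil $(a_{(i-1)i}a_{i(i+1)};g_ih_i)$ at each vertex and places us exactly in the hypothesis of Theorem~\ref{thm:projective-quadrilateral}. The $h_i$ are merely auxiliary; they will disappear from the final statement.

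Next I would identify condition (\ref{ceva4full1}) with condition (\ref{diag}) of Theorem~\ref{thm:projective-quadrilateral}. Unrolling the cyclic indices, $D_1=g_1\times(A_4\times A_2)$ and $D_3=g_3\times(A_2\times A_4)$ both lie on the diagonal $A_2\times A_4$, while $D_2=g_2\times(A_1\times A_3)$ and $D_4=g_4\times(A_1\times A_3)$ lie on $A_1\times A_3$. So the product appearing in (\ref{ceva4full1}) is literally the product in (\ref{diag}).

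It remains to translate (\ref{ceva4full2}) and (\ref{ceva4full3}) into the language of the lines $\ell^{(i)}_k$ from Theorem~\ref{thm:projective-quadrilateral}. Note that $(A_1\times A_4)\times(A_2\times A_3)=a_{41}\times a_{23}=A_6$ and $(A_1\times A_2)\times(A_3\times A_4)=a_{12}\times a_{34}=A_5$. Hence (\ref{ceva4full2}) says that $A_6,\ g_1\times g_2,\ g_3\times g_4$ are collinear. By Proposition~\ref{prop:free-quadrilateral}, the line $\ell^{(4)}_1$ already passes through $A_6$ and $g_1\times g_2$, while $\ell^{(4)}_2$ passes through $A_6$ and $g_3\times g_4$; so (\ref{ceva4full2}) is equivalent to these two lines sharing a second point beyond $A_6$, i.e.~to $\ell^{(4)}_1=\ell^{(4)}_2$. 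Analogously, (\ref{ceva4full3}) is equivalent to $\ell^{(2)}_1=\ell^{(2)}_2$. Theorem~\ref{thm:projective-quadrilateral} now asserts that each of the four coincidences $\ell^{(i)}_1=\ell^{(i)}_2$ is equivalent to condition (\ref{diag}), which closes the cycle: (\ref{ceva4full1})$\Leftrightarrow$(\ref{ceva4full2})$\Leftrightarrow$(\ref{ceva4full3}).

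I expect no genuine obstacle: the substantive work has been done in Theorem~\ref{thm:projective-quadrilateral}, and what remains is essentially bookkeeping. The two points that need mild care are the cyclic indexing in the definition of the $D_i$, and the observation that, since $\ell^{(4)}_1$ and $\ell^{(4)}_2$ (respectively $\ell^{(2)}_1$ and $\ell^{(2)}_2$) always pass through the same vertex $A_6$ (resp.~$A_5$), the three-point collinearity in (\ref{ceva4full2}) (resp.~(\ref{ceva4full3})) is the right projective test for their coincidence.
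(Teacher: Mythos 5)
Your proposal is correct and follows exactly the route the paper intends: the paper states Theorem~\ref{thm:ceva4full} without a separate proof, presenting it as an immediate reformulation of Theorem~\ref{thm:projective-quadrilateral}\,(\ref{diag}), with $(A_1\times A_4)\times(A_2\times A_3)=A_6$ and $(A_1\times A_2)\times(A_3\times A_4)=A_5$ identifying conditions (\ref{ceva4full2}) and (\ref{ceva4full3}) with the coincidences $\ell^{(4)}_1=\ell^{(4)}_2$ and $\ell^{(2)}_1=\ell^{(2)}_2$. Your explicit introduction of the auxiliary harmonic conjugates $h_i$ and the bookkeeping of the cyclic indices fill in precisely the details the paper leaves implicit.
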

It's easy to see that when $g_1,g_2,g_3,g_4$ are concurrent, (\ref{ceva4full2}) and (\ref{ceva4full3}) are trivially satisfied, hence Theorem~\ref{thm:ceva4full} is in fact a generalization of Theorem~\ref{thm:ceva4}.

\begin{figure}[h]
\begin{center}
\begin{tikzpicture}[line cap=round,line join=round,x=33,y=33]
\clip(-2.0099660178106946,-1.4754692113985637) rectangle (4.252041024060651,6.0674029072191935);
\draw [line width=1pt,domain=-2.0099660178106946:4.252041024060651] plot(\x,{(-2.5725420795613423--0.739466331521546*\x)/4.375759159365857});
\draw [line width=1pt,domain=-2.0099660178106946:4.252041024060651] plot(\x,{(-10.386817528055786--2.988685179074961*\x)/-2.1218370968324396});
\draw [line width=1pt,domain=-2.0099660178106946:4.252041024060651] plot(\x,{(-6.019588429653726--0.21994652833019224*\x)/-1.9148285995804946});
\draw [line width=1pt,domain=-2.0099660178106946:4.252041024060651] plot(\x,{(-3.3023590724488665-3.948098038926699*\x)/-0.33909346295292253});
\draw [line width=0.6pt,domain=-2.0099660178106946:4.252041024060651] plot(\x,{(--11.150276804804935-3.208631707405153*\x)/4.036665696412934});
\draw [line width=0.6pt,domain=-2.0099660178106946:4.252041024060651] plot(\x,{(-1.6874340332621276-3.728151510596507*\x)/-2.253922062533417});
\draw [line width=0.6pt,color=red,domain=-2.0099660178106946:4.252041024060651] plot(\x,{(-0.5170959522942737--1.8620672819316786*\x)/2.963454737882141});
\draw [line width=0.6pt,color=red,domain=-2.0099660178106946:4.252041024060651] plot(\x,{(-2.0041715595664136-0.5653561102259026*\x)/-0.9268672155905692});
\draw [line width=0.6pt,color=red,domain=-2.0099660178106946:4.252041024060651] plot(\x,{(--5.151837863456965-0.5669883437295753*\x)/1.7050181339577692});
\draw [line width=0.6pt,color=red,domain=-2.0099660178106946:4.252041024060651] plot(\x,{(-5.576303312268419--1.6047953586402863*\x)/-2.958492954172865});
\draw [line width=0.6pt,dash pattern=on 3pt off 3pt,color=blue,domain=-2.0099660178106946:4.252041024060651] plot(\x,{(--5.948254174473577-2.6995201671464026*\x)/1.2827929999477856});
\begin{small}
\draw [fill=white] (-0.9,-0.74) circle (1.8pt);
\draw[color=black] (-.67,-0.9) node {$A_1$};
\draw [fill=white] (3.4757591593658566,-5.336684784540555E-4) circle (1.8pt);
\draw[color=black] (3.339909833044268,-0.2) node {$A_2$};
\draw [fill=white] (1.353922062533417,2.9881515105965066) circle (1.8pt);
\draw[color=black] (1.8,2.77) node {$A_3$};
\draw [fill=white] (-0.5609065370470775,3.208098038926699) circle (1.8pt);
\draw[color=black] (-0.7873220809163923,3.07) node {$A_4$};
\draw [fill=white] (2.0634547378821413,1.1220672819316786) circle (1.8pt);
\draw[color=red] (2.1,1.39) node {$D_1$};
\draw[color=red] (-1.8094265360978723,-1.14) node {$g_1$};
\draw [fill=white] (0.4270548469428478,2.422795400370604) circle (1.8pt);
\draw[color=red] (0.41,2.18) node {$D_3$};
\draw[color=red] (-1.8094265360978723,0.85) node {$g_3$};
\draw [fill=white] (1.1441115969106919,2.6411096951971236) circle (1.8pt);
\draw[color=red] (1.33,2.4) node {$D_4$};
\draw[color=red] (-1.8094265360978723,3.8) node {$g_4$};
\draw [fill=white] (0.5172662051929919,1.6042616901618323) circle (1.8pt);
\draw[color=red] (0.21,1.57) node {$D_2$};
\draw[color=red] (-1.8094265360978723,2.66) node {$g_2$};
\draw [fill=white] (-0.3711096341262466,5.417921766257375) circle (1.8pt);
\draw[color=blue] (1.15,5.45) node {$(A_1\times A_4)\times(A_2\times A_3)$};
\draw [fill=white] (1.7589440302783559,0.9307299828062336) circle (1.8pt);
\draw[color=blue] (1.15,0.93) node {$g_1\times g_2$};
\draw [fill=white] (0.911683365821539,2.718401599110972) circle (1.8pt);
\draw [color=white,fill=white] (0.,2.718401599110972) circle (5pt);

\draw[color=blue] (0.3,2.72) node {$g_3\times g_4$};
\end{small}
\end{tikzpicture}
\caption{Collinear points in Theorem~\ref{thm:ceva4full}(\ref{ceva4full2}).}\label{fig:cevafull}
\end{center}
\end{figure}
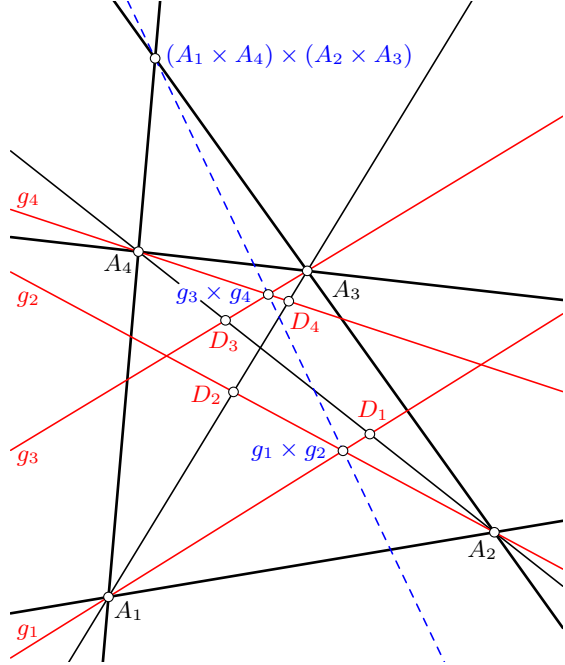

Can the cases $n>4$ also be generalized in a similar fashion? The answer is yes:

\begin{theorem}[Two-sided Ceva's Theorem for $n$-gons]\label{thm:cevafull}
For $n\geq 3$, let $P^n=[A_1^n,\dots,A_n^n]$ be an $n$-gon and let $g_1^n\dots,g_n^n$ 
be lines such that for each $i\in\{1,\dots,n\}$, $g_i^n$ passes through $A_i^n$.
For $m\in\{4,\dots,n\}$, the $m$-gon $P^m$ can be reduced in a step to the $(m-1)$-gon $P^{m-1}$ by obeying the following procedure:
\begin{itemize}
\item Choose two adjacent vertices $A_i^m$ and $A_{i+1}^m$, $i\in\{1,\dots,m\}$.
\item Replace $A_i^m$ and $A_{i+1}^m$ by\\
\[A_i^{m-1}=(A_{i-1}^m\times A_i^m)\times(A_{i+1}^m\times A_{i+2}^m).\]
\item Replace $g_i^m$ and $g_{i+1}^m$ by\\
\[g_i^{m-1}=A_i^{m-1}\times (g_i^m\times g_{i+1}^m).\]
\item For the other vertices, let
\[A_j^{m-1}=\begin{cases}A_j^m&\text{for $j\in\{1,\dots,i-1\}$,}\\A_{j+1}^m&\text{for $j\in\{i+1,\dots,m-1\}.$}\end{cases}\]
\item Similarly, for the other lines, let
\[g_j^{m-1}=\begin{cases}g_j^m&\text{for $j\in\{1,\dots,i-1\}$,}\\g_{j+1}^m&\text{for $j\in\{i+1,\dots,m-1\}.$}\end{cases}\]
\end{itemize}
If it is possible to reduce $P^n$ in $(n-3)$ steps to a triangle $P^3$ for which $g_1^3,g_2^3,g_3^3$ are concurrent, then every sequence of $(n-3)$ steps will lead to such a triangle (see Figure~\ref{fig-ceva-n}). Furthermore, this is the case if and only if
\begin{equation}\label{eq:cevafull}\frac{A_1^nD_2^n}{D_2^n A_3^n}\cdot \frac{A_2^nD_3^n}{D_3^n A_4^n}\cdot \ldots\cdot \frac{A_n^nD_1^n}{D_1^n A_{2}^n}=1,\end{equation}
where $D_i^n=g_i^n\times (A_{i-1}^n\times A_{i+1}^n)$ for $i\in\{1,\dots,n\}$.
\end{theorem}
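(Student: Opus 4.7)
I would prove the theorem by induction on $n\geq 3$, with base case $n=3$ being Ceva's theorem. The key lemma is that a single reduction step preserves the Ceva product \emph{as a real number}: whenever $P^{m-1}$ is obtained from $P^m$ by one merging step,
\begin{equation}\label{eq:plan-invariance}
\prod_{j=1}^{m} \frac{A_j^m D_{j+1}^m}{D_{j+1}^m A_{j+2}^m}=\prod_{j=1}^{m-1} \frac{A_j^{m-1} D_{j+1}^{m-1}}{D_{j+1}^{m-1} A_{j+2}^{m-1}}.
\end{equation}
Granting~\eqref{eq:plan-invariance}, iteration along an arbitrary sequence of $n-3$ reductions shows that the $P^n$-product equals the Ceva product of the resulting triangle. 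This value is therefore an intrinsic invariant of $(P^n;g_1^n,\ldots,g_n^n)$, and by Ceva it equals $1$ if and only if the triangle's three lines are concurrent. Both the path-independence of the reduction procedure and the equivalence with~\eqref{eq:cevafull} follow simultaneously.

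To prove~\eqref{eq:plan-invariance}, let the step merge $A_i^m, A_{i+1}^m$ into $B=(A_{i-1}^m\times A_i^m)\times(A_{i+1}^m\times A_{i+2}^m)$ and abbreviate $E,F,G,H,I,J$ for the neighbouring vertices $A_{i-2}^m,\ldots,A_{i+3}^m$, with associated lines $e,f,h,k$ through $F,G,H,I$; set $P=f\times h$ and $\tilde g=B\times P$. All factors of~\eqref{eq:plan-invariance} whose indices lie outside the merge agree verbatim, so it remains to prove the seven-factor identity
\[
\frac{ED^1}{D^1G}\cdot\frac{FD^2}{D^2H}\cdot\frac{GD^3}{D^3I}\cdot\frac{HD^4}{D^4J}=\frac{E\tilde D^1}{\tilde D^1 B}\cdot\frac{F\tilde D^2}{\tilde D^2 I}\cdot\frac{B\tilde D^3}{\tilde D^3 J},
\]
where $D^1,\ldots,D^4$ and $\tilde D^1,\tilde D^2,\tilde D^3$ denote the relevant diagonal points in $P^m$ and $P^{m-1}$. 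The two boundary discrepancies each yield to one Menelaos application: triangle $EGB$ with transversal $e$ (which hits $GB$ at $F$) gives $\frac{ED^1}{D^1G}=-\frac{FB}{GF}\cdot\frac{E\tilde D^1}{\tilde D^1 B}$, and symmetrically triangle $HJB$ with transversal $k$ (hitting $BH$ at $I$) gives $\frac{HD^4}{D^4J}=-\frac{IH}{BI}\cdot\frac{B\tilde D^3}{\tilde D^3 J}$.

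The main obstacle is the middle identity
\[
\frac{FD^2}{D^2H}\cdot\frac{GD^3}{D^3I}=\frac{GF\cdot BI}{FB\cdot IH}\cdot\frac{F\tilde D^2}{\tilde D^2 I},
\]
which describes how the pencil of three concurrent lines $f,h,\tilde g$ through $P$ is cut by the transversals $FH,GI,FI$. I would prove it by two further Menelaos applications crucially exploiting $B=FG\cap HI$: for instance, Menelaos in triangle $FHI$ with transversal $\tilde g$ (meeting $HI$ at $B$) combined with an auxiliary Menelaos in triangle $FGI$ or $GHI$ should produce exactly the combinatorial factor $\frac{GF\cdot BI}{FB\cdot IH}$. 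As a safety net, all three identities can be verified by a routine rational-function computation in an affine chart where $B$ is the origin and $FG,HI$ are the coordinate axes. Once the middle identity is in hand, the three combinatorial factors telescope to $1$, establishing~\eqref{eq:plan-invariance} and closing the induction.
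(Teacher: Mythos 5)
Your proposal follows the same skeleton as the paper's proof: isolate the effect of one reduction step, show that the generalized Ceva product is invariant under it, and then invoke Ceva's theorem on the final triangle to get both the path-independence and the equivalence with~(\ref{eq:cevafull}) at once. Your bookkeeping is right — exactly four factors of the product are disturbed by a merge and must be matched against three new ones, which is precisely the paper's identity~(\ref{eq:cevafull3}) — and your two boundary Menelaos applications in the triangles $EGB$ and $HJB$ are valid because $B$ lies on the lines $F\times G$ and $H\times I$ by construction. Where you diverge is in the engine for the local identity: the paper rewrites every ratio via the area principle with respect to the three points $G_{m-2},G_{m-1},G_1$ on the new line $g_{m-1}^{m-1}$ and lets the areas cancel, whereas you use Menelaos for the outer factors and leave the middle identity
$\frac{FD^2}{D^2H}\cdot\frac{GD^3}{D^3I}=\frac{GF\cdot BI}{FB\cdot IH}\cdot\frac{F\tilde D^2}{\tilde D^2 I}$
only sketched ("should produce exactly the combinatorial factor"). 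That identity is in fact correct — I checked it, and your coordinate-chart safety net would confirm it — but the Menelaos route you gesture at introduces auxiliary intersection points and does not close as cleanly as you suggest. The quickest way to finish it is exactly the paper's tool: by the area principle, $\frac{FD^2}{D^2H}\cdot\frac{GD^3}{D^3I}=\frac{[FGP]}{[HIP]}$ and $\frac{F\tilde D^2}{\tilde D^2 I}=\frac{[FBP]}{[BIP]}$, and since $F,G,B$ and $H,I,B$ are collinear, triangles with the common apex $P$ give $[FGP]=\frac{FG}{FB}[FBP]$ and $[HIP]=\frac{HI}{BI}[BIP]$, which yields your combinatorial factor immediately. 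So your plan is sound and the telescoping of the three sign/length factors to $1$ is correct; the area principle would just make your "main obstacle" a one-line step, which is essentially why the paper adopts it throughout.
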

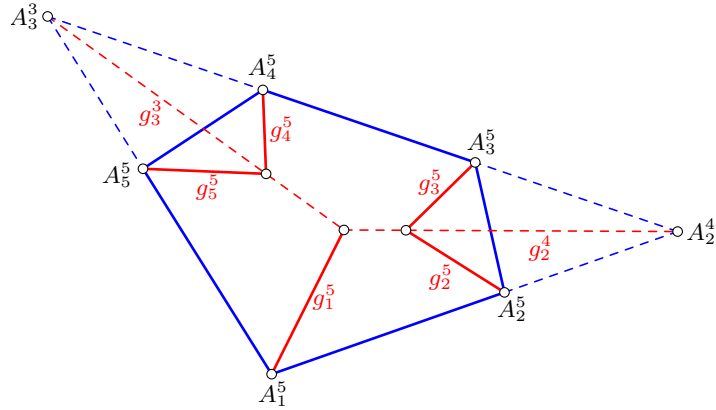
\begin{figure}[h]
\begin{center}
\begin{tikzpicture}[line cap=round,line join=round,x=20,y=20]
\clip(-.2,-2.77029987099037) rectangle (12.99,5.078267726749624);
\draw [line width=1pt,color=blue] (2.2668826205398047,1.8477691952478335)-- (4.509291387903656,3.33502495254647);
\draw [line width=1pt,color=blue] (8.4822480090719,1.967285787881993)-- (9.029267073696232,-0.4842962631811971);
\draw [line width=.6pt,dash pattern=on 3pt off 3pt,color=red] (0.4818689214134097,4.721514654125079)-- (6.026406258453216,0.693177875739591);
\draw [line width=1pt,color=red] (4.673717524751815,-2.027064303022566)-- (6.026406258453216,0.693177875739591);
\draw [line width=.6pt,dash pattern=on 3pt off 3pt,color=red] (6.026406258453216,0.693177875739591)-- (12.269585029382757,0.6634484530208788);
\draw [line width=1pt,color=red] (4.509291387903656,3.33502495254647)-- (4.569338258633309,1.7517983260108902);
\draw [line width=1pt,color=red] (2.2668826205398047,1.8477691952478335)-- (4.569338258633309,1.7517983260108902);
\draw [line width=1pt,color=red] (8.4822480090719,1.967285787881993)-- (7.185898236487691,0.6876564853679983);
\draw [line width=1pt,color=red] (7.185898236487691,0.6876564853679983)-- (9.029267073696232,-0.4842962631811971);
\draw [line width=.6pt,dash pattern=on 3pt off 3pt,color=blue] (0.4818689214134097,4.721514654125079)-- (2.2668826205398047,1.8477691952478335);
\draw [line width=1pt,color=blue] (2.2668826205398047,1.8477691952478335)-- (4.673717524751815,-2.027064303022566);
\draw [line width=1pt,color=blue] (4.673717524751815,-2.027064303022566)-- (9.029267073696232,-0.4842962631811971);
\draw [line width=.6pt,dash pattern=on 3pt off 3pt,color=blue] (9.029267073696232,-0.4842962631811971)-- (12.269585029382757,0.6634484530208788);
\draw [line width=.6pt,dash pattern=on 3pt off 3pt,color=blue] (12.269585029382757,0.6634484530208788)-- (8.4822480090719,1.967285787881993);
\draw [line width=1pt,color=blue] (4.509291387903656,3.33502495254647)-- (8.4822480090719,1.967285787881993);
\draw [line width=.6pt,dash pattern=on 3pt off 3pt,color=blue] (0.4818689214134097,4.721514654125079)-- (4.509291387903656,3.33502495254647);
\begin{small}
\draw [fill=white] (4.673717524751815,-2.027064303022566) circle (1.8pt);
\draw[color=black] (4.7,-2.4) node {$A_1^5$};
\draw [fill=white] (12.269585029382757,0.6634484530208788) circle (1.8pt);
\draw (12.269585029382757,0.6634484530208788) node[anchor=west] {$A_2^4$};
\draw [color=red] (9.7,0.75) node[anchor=north] {$g_2^4$};
\draw [fill=white] (0.4818689214134097,4.721514654125079) circle (1.8pt);
\draw (0.4818689214134097,4.721514654125079) node[anchor=east] {$A_3^3$};

\draw[color=red] (2.8,2.9) node[anchor=east] {$g_3^3$};

\draw [fill=white] (2.2668826205398047,1.8477691952478335) circle (1.8pt);
\draw[color=black] (1.78,1.733707670894513) node {$A_5^5$};
\draw [fill=white] (4.509291387903656,3.33502495254647) circle (1.8pt);
\draw[color=black] (4.502773344119215,3.7701731271262915) node {$A_4^5$};
\draw [fill=white] (8.4822480090719,1.967285787881993) circle (1.8pt);
\draw[color=black] (8.60543367930148,2.3580255479874666) node {$A_3^5$};
\draw [fill=white] (9.029267073696232,-0.4842962631811971) circle (1.8pt);
\draw[color=black] (9.17029271095701,-.8) node {$A_2^5$};
\draw [fill=white] (6.026406258453216,0.693177875739591) circle (1.8pt);
\draw[color=red] (5.66,-0.5703225898056747) node {$g_1^5$};
\draw [fill=white] (4.569338258633309,1.7517983260108902) circle (1.8pt);
\draw[color=red] (4.86,2.5958609297371633) node {$g_4^5$};
\draw[color=red] (3.4771082603236487,1.5) node {$g_5^5$};
\draw [fill=white] (7.185898236487691,0.6876564853679983) circle (1.8pt);
\draw[color=red] (7.64,1.6) node {$g_3^5$};
\draw[color=red] (7.84,-.2) node {$g_2^5$};
\end{small}
\end{tikzpicture}
\caption{Ceva's Theorem for a pentagon $A_1^5\ldots A_5^5$.}\label{fig-ceva-n}
\end{center}
\end{figure}

In the proof we will make use of the area principle that can also be found in ~\cite{gruenbaum95}:

\begin{lemma}[Area principle, see Figure~\ref{fig-ABC}] Whenever $D$ is the intersection point of two lines $A_1\times A_2$ and $B\times C$, then there holds
\begin{equation}\label{eq:areapr}\frac{A_1D}{DA_2}=\frac{[A_1BC]}{[BA_2C]}.\end{equation}
\end{lemma}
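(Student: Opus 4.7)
The plan is to parametrize $D$ affinely on the line $A_1\times A_2$, encode the condition that $D$ lies on $B\times C$ as the vanishing of a signed area, and exploit the affine dependence of the signed area on its first vertex.

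First I would write $D=(1-t)A_1+tA_2$ for some $t\in\mathbb{R}$, so that the oriented ratio reads $A_1D/DA_2=t/(1-t)$. Next, I would invoke the characterization that three points are collinear precisely when the signed area of the triangle they span vanishes; the condition $D\in B\times C$ thus translates into $[DBC]=0$.

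The key step is the affine linearity of the functional $X\mapsto[XBC]$ in its first vertex, which is immediate from the determinant formula $[XBC]=\tfrac12\det(X-B,\,C-B)$. Substituting the parametrization of $D$ yields
\[
0=[DBC]=(1-t)[A_1BC]+t[A_2BC],
\]
hence $t/(1-t)=-[A_1BC]/[A_2BC]$. A final application of the antisymmetry $[BA_2C]=-[A_2BC]$ absorbs the minus sign and produces the desired identity $A_1D/DA_2=[A_1BC]/[BA_2C]$.

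I do not expect any serious obstacle; the argument is short and essentially forced. The only point requiring care is that the sign conventions on both sides match. Explicitly, one should verify that $A_1D/DA_2$ is positive iff $D$ lies strictly between $A_1$ and $A_2$, which occurs iff $A_1$ and $A_2$ sit on opposite sides of the line $B\times C$, which in turn occurs iff $[A_1BC]$ and $[A_2BC]$ have opposite signs, i.e.\ iff $[A_1BC]/[BA_2C]$ is positive. Since this chain of equivalences is automatic, no genuine difficulty arises.
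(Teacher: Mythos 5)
Your proof is correct, but it takes a different route from the paper. The paper's proof is a one-line geometric argument: both sides of \eqref{eq:areapr} equal the ratio of the (signed) heights of the triangles $A_1BC$ and $BA_2C$ over the common base $B\times C$ --- the left side because dropping perpendiculars from $A_1$ and $A_2$ onto that line produces similar right triangles with hypotenuses $A_1D$ and $DA_2$, the right side because the two triangles share the base $BC$. Your argument instead parametrizes $D$ affinely on $A_1\times A_2$ and uses that $X\mapsto[XBC]$ is an affine function of $X$, so that $[DBC]=0$ forces $(1-t)[A_1BC]+t[A_2BC]=0$; this is equally valid and has the advantage of handling the orientation bookkeeping automatically, whereas the paper's phrasing implicitly requires the heights to be taken with sign for the oriented ratio to come out right. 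One small slip: your determinant formula $\tfrac12\det(X-B,\,C-B)$ actually computes $[BXC]=-[XBC]$ rather than $[XBC]$; this is harmless here because the argument only uses affineness of the functional and the vanishing criterion, both of which are insensitive to an overall sign, but you should fix the formula (it should be $\tfrac12\det(B-X,\,C-X)$) if you state it.
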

\begin{proof}
Each side of~(\ref{eq:areapr}) is equal to the ratio of the two heights of the triangles $A_1BC$ and $BA_2C$ with respect to the base $BC$.
\end{proof}

At some point, we will also need a slightly more refined version of the area principle:

\begin{lemma}[Area principle for unequal bases, see Figure~\ref{fig-ABC}] Whenever $D$ is the intersection point of two lines $A_1\times A_2$ and $B\times C_1$, on which also the point $C_2$ lies, then there holds
\begin{equation}\label{eq:areapr2}\frac{A_1D}{DA_2}=\frac{[A_1BC_1]}{[BA_2C_2]}\cdot \frac{BC_2}{BC_1}.\end{equation}
\end{lemma}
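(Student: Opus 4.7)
The plan is to reduce the refined statement to the ordinary area principle (the lemma stated just above it) via a single extra bookkeeping step that accounts for the difference between the bases $BC_1$ and $BC_2$.

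First I would apply the ordinary area principle to the point $D = (A_1 \times A_2) \cap (B \times C_1)$, using the line through $B$ and $C_1$. This immediately gives
\[
\frac{A_1D}{DA_2} \;=\; \frac{[A_1BC_1]}{[BA_2C_1]},
\]
so the numerator of the claim is already correct and the only task left is to convert the denominator $[BA_2C_1]$ into $[BA_2C_2]$.

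Next I would exploit the hypothesis that $C_2$ lies on the line $B \times C_1$. The two triangles $BA_2C_1$ and $BA_2C_2$ then share the edge from $B$ to $A_2$ and both have their third vertex on the common line through $B,C_1,C_2$; equivalently, viewing $BC_1$ and $BC_2$ as bases, both triangles have the same apex $A_2$ and hence the same height. Therefore
\[
\frac{[BA_2C_1]}{[BA_2C_2]} \;=\; \frac{BC_1}{BC_2},
\]
as an identity of signed areas and signed lengths (the orientations match because $C_1,C_2$ lie on the same oriented line through $B$).

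Combining the two displays gives
\[
\frac{A_1D}{DA_2} \;=\; \frac{[A_1BC_1]}{[BA_2C_1]} \;=\; \frac{[A_1BC_1]}{[BA_2C_2]} \cdot \frac{[BA_2C_2]}{[BA_2C_1]} \;=\; \frac{[A_1BC_1]}{[BA_2C_2]} \cdot \frac{BC_2}{BC_1},
\]
which is the asserted identity. There is no real obstacle here; the only point that merits care is the consistency of signs, which is handled by noting that the signed-area version of the ordinary area principle and the signed-length ratio on the line $B\times C_1$ respect the chosen orientations, so no absolute values intrude.
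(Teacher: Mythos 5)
Your proof is correct and takes essentially the same route as the paper: both apply the ordinary area principle and then rescale one signed area by the ratio of the collinear bases, using that triangles with a common apex over bases on the same line have areas proportional to those bases. The only (cosmetic) difference is that the paper invokes the area principle with $C_2$ and rescales the numerator $[A_1BC_2]=[A_1BC_1]\cdot BC_2/BC_1$, whereas you invoke it with $C_1$ and rescale the denominator.
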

{\em Proof.} As above, we have
\begin{equation}
\frac{A_1D}{DA_2}=\frac{[A_1BC_2]}{[BA_2C_2]}=\frac{[A_1BC_1]\cdot BC_2/BC_1}{[BA_2C_2]}=\frac{[A_1BC_1]}{[BA_2C_2]}\cdot \frac{BC_2}{BC_1}.\tag*{$\Box$}
\end{equation}
\begin{figure}[h]
\begin{center}
\definecolor{gray1}{rgb}{0.25098039215686274,0.25098039215686274,0.25098039215686274}
\definecolor{gray2}{rgb}{0.5019607843137255,0.5019607843137255,0.5019607843137255}
\begin{tikzpicture}[line cap=round,line join=round,x=15,y=15]

\clip(-4.35,-0.6321037166824842) rectangle (6.411046961169184,4.683808701461465);
\fill[line width=1.pt,color=gray2,fill=gray2,fill opacity=0.10000000149011612] (-3.66,3.98) -- (-1.3843022191687888,0.02246758853673282) -- (5.6176271972671,1.4902941517555843) -- cycle;

\fill[line width=1.pt,color=gray1,fill=gray1,fill opacity=0.3100000023841858] (-3.66,3.98) -- (5.04239786843809,3.6721985024863097) -- (5.6176271972671,1.4902941517555843) -- cycle;

\draw [shift={(-0.5464938813578679,3.1444723130919185)},line width=0.6pt] (0,0) -- (164.9782614177078:0.5950648229265628) arc (164.9782614177078:254.9782614177078:0.5950648229265628) ;

\draw [shift={(4.534847264951693,1.7808645177044837)},line width=0.6pt] (0,0) -- (-15.021738582292203:0.5950648229265628) arc (-15.021738582292203:74.9782614177078:0.5950648229265628) ;

\draw [line width=1.pt] (-1.3843022191687888,0.02246758853673282)-- (5.04239786843809,3.6721985024863097);
\draw [line width=1.pt] (-3.66,3.98)-- (5.04239786843809,3.6721985024863097);
\draw [line width=1.pt] (5.04239786843809,3.6721985024863097)-- (5.6176271972671,1.4902941517555843);
\draw [line width=1.pt] (5.6176271972671,1.4902941517555843)-- (-3.66,3.98);
\draw [line width=1.pt] (-3.66,3.98)-- (-1.3843022191687888,0.02246758853673282);
\draw [line width=1.pt] (-1.3843022191687888,0.02246758853673282)-- (5.6176271972671,1.4902941517555843);
\draw [line width=.6pt,dash pattern=on 3pt off 3pt] (-1.3843022191687888,0.02246758853673282)-- (-0.5464938813578679,3.1444723130919185);
\draw [line width=.6pt,dash pattern=on 3pt off 3pt] (5.04239786843809,3.6721985024863097)-- (4.534847264951693,1.7808645177044837);
\fill[line width=1pt] (-0.8497021773469153,2.969568275143413) circle (0.04);
\fill[line width=1pt] (4.83805556094074,1.955768555652989) circle (0.04);

\draw[->,line width=.6pt] (.6,2) arc (20:340:.4) ;
\draw[->,line width=.6pt] (2.5,3.26) arc (20:340:.4) ;
\begin{small}
\draw [fill=white] (-1.3843022191687888,0.02246758853673282) circle (1.8pt);
\draw[color=black] (-1.7710943540710544,-0.27506482292654677) node {$A_1$};
\draw [fill=white] (5.04239786843809,3.6721985024863097) circle (1.8pt);
\draw[color=black] (5.6,3.8705534434618314) node {$A_2$};
\draw [fill=white] (5.6176271972671,1.4902941517555843) circle (1.8pt);
\draw[color=black] (6,1.5) node {$B$};
\draw [fill=white] (-3.66,3.98) circle (1.8pt);
\draw[color=black] (-4.13,4.078826131486128) node {$C$};
\draw [fill=white] (2.6178586560658488,2.29529969052183) circle (1.8pt);
\draw[color=black] (2.602632094439182,1.8) node {$D$};
\end{small}
\end{tikzpicture}\quad
\begin{tikzpicture}[line cap=round,line join=round,x=15,y=15]
\clip(-4.54,-0.6321037166824842) rectangle (6.411046961169184,4.683808701461465);

\fill[line width=1.pt,color=gray2,fill=gray2,fill opacity=0.10000000149011612] (-3.66,3.98) -- (-1.3843022191687888,0.02246758853673282) -- (5.6176271972671,1.4902941517555843) -- cycle;

\fill[line width=1.pt,color=gray1,fill=gray1,fill opacity=0.3100000023841858] (5.6176271972671,1.4902941517555843) -- (5.04239786843809,3.6721985024863097) -- (-1.8951565749769639,3.5063938824994096) -- cycle;

\draw [shift={(-0.5464938813578679,3.1444723130919185)},line width=0.6pt] (0,0) -- (164.9782614177078:0.5950648229265628) arc (164.9782614177078:254.9782614177078:0.5950648229265628) ;

\draw [shift={(4.534847264951693,1.7808645177044837)},line width=0.6pt] (0,0) -- (-15.021738582292203:0.5950648229265628) arc (-15.021738582292203:74.9782614177078:0.5950648229265628) ;

\draw [line width=1.pt] (-1.3843022191687888,0.02246758853673282)-- (5.04239786843809,3.6721985024863097);
\draw [line width=1.pt] (5.04239786843809,3.6721985024863097)-- (5.6176271972671,1.4902941517555843);
\draw [line width=1.pt] (5.6176271972671,1.4902941517555843)-- (-3.66,3.98);
\draw [line width=1.pt] (-3.66,3.98)-- (-1.3843022191687888,0.02246758853673282);
\draw [line width=1.pt] (-1.3843022191687888,0.02246758853673282)-- (5.6176271972671,1.4902941517555843);
\draw [line width=.6pt,dash pattern=on 3pt off 3pt] (-1.3843022191687888,0.02246758853673282)-- (-0.5464938813578679,3.1444723130919185);
\draw [line width=.6pt,dash pattern=on 3pt off 3pt] (5.04239786843809,3.6721985024863097)-- (4.534847264951693,1.7808645177044837);
\draw [line width=1pt] (5.04239786843809,3.6721985024863097)-- (-1.8951565749769639,3.5063938824994096);
\fill[line width=1pt] (-0.8497021773469153,2.969568275143413) circle (0.04);
\fill[line width=1pt] (4.83805556094074,1.955768555652989) circle (0.04);

\draw[->,line width=.6pt] (.6,2) arc (20:340:.4) ;
\draw[->,line width=.6pt] (2.7,3.1) arc (20:340:.4) ;
\begin{small}
\draw [fill=white] (-1.3843022191687888,0.02246758853673282) circle (1.8pt);
\draw[color=black] (-1.7710943540710544,-0.27506482292654677) node {$A_1$};
\draw [fill=white] (5.04239786843809,3.6721985024863097) circle (1.8pt);
\draw[color=black] (5.6,3.8705534434618314) node {$A_2$};
\draw [fill=white] (5.6176271972671,1.4902941517555843) circle (1.8pt);
\draw[color=black] (6,1.5) node {$B$};
\draw [fill=white] (-3.66,3.98) circle (1.8pt);
\draw[color=black] (-4.13,4.078826131486128) node {$C_1$};
\draw [fill=white] (2.6178586560658488,2.29529969052183) circle (1.8pt);
\draw[color=black] (2.602632094439182,1.8) node {$D$};
\draw [fill=white] (-1.8951565749769639,3.5063938824994096) circle (1.8pt);
\draw[color=black] (-2.1479687419245446,3.037462691364645) node {$C_2$};
\end{small}
\end{tikzpicture}

\caption{Area principles.}\label{fig-ABC}
\end{center}
\end{figure}
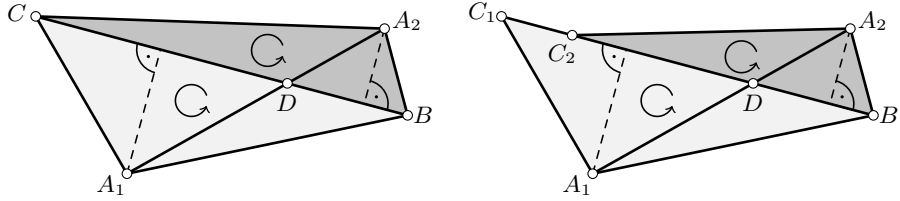

\begin{proof}[Proof of Theorem~\ref{thm:cevafull}] 
We want to show that in every step, the left-hand side of~(\ref{eq:cevafull}) is constant, i.e.:
\begin{equation}\label{eq:cevafull2}
\frac{A_1^mD_2^m}{D_2^m A_3^m}\cdot \frac{A_2^mD_3^m}{D_3^m A_4^m}\cdot \ldots\cdot \frac{A_m^mD_1^m}{D_1^m A_{2}^m}
=\frac{A_1^{m-1}D_2^{m-1}}{D_2^{m-1} A_3^{m-1}}\cdot \frac{A_2^{m-1}D_3^{m-1}}{D_3^{m-1} A_4^{m-1}}\cdot \ldots\cdot \frac{A_{m-1}^{m-1}D_1^{m-1}}{D_1^{m-1} A_{2}^{m-1}}
\end{equation}
By symmetry we can assume that in this step, $A_{m-1}^m$ and $A_{m}^m$ are replaced by $A_{m-1}^{m-1}$ (this will make the notation easier since $A_j^{m-1}=A_j^m$ for all $j\in\{1,\dots,m-2\}$). Having a closer look at~(\ref{eq:cevafull2}), we see that it is enough to show
\begin{multline}\label{eq:cevafull3}
\frac{A_{m-3}^mD_{m-2}^m}{D_{m-2}^m A_{m-1}^m}\cdot \frac{A_{m-2}^mD_{m-1}^m}{D_{m-1}^m A_{m}^m}\cdot \frac{A_{m-1}^mD_{m}^m}{D_{m}^m A_{1}^m}\cdot \frac{A_{m}^mD_{1}^m}{D_{1}^m A_{2}^m}\\
=\frac{A_{m-3}^{m-1}D_{m-2}^{m-1}}{D_{m-2}^{m-1} A_{m-1}^{m-1}}\cdot \frac{A_{m-2}^{m-1}D_{m-1}^{m-1}}{D_{m-1}^{m-1} A_{1}^{m-1}}\cdot\frac{A_{m-1}^{m-1}D_{1}^{m-1}}{D_{1}^{m-1} A_{2}^{m-1}}
\end{multline}
Let $G_{m-2}$ and $G_{1}$ be the intersections of $g_{m-2}^m$ and $g_1^m$ with the new line $g_{m-1}^{m-1}$. Further, let $G_{m-1}=g_{m-1}^m\times g_m^m$. By definition, $G_{m-1}$ also lies on $g_{m-1}^{m-1}$ (see Figure~\ref{fig-ceva-n-proof}).

\begin{figure}[h]
\begin{center}
\begin{tikzpicture}[line cap=round,line join=round,x=22,y=22]
\clip(-2.03945284166284,-3.5419349255492967) rectangle (14.934475496490352,5.964272265021602);
\draw [line width=1.pt,color=blue] (0.03592758063272827,-1.0905874873831358)-- (6.30883577428098,-2.7851645823497253);
\draw [line width=1.pt,color=blue] (0.6602454577256823,4.394491004219247)-- (5.060200020095072,5.286373685780609);
\draw [line width=1.pt,color=blue] (6.30883577428098,-2.7851645823497253)-- (13.369573669975104,2.38775497070618);
\draw [line width=1.pt,color=blue] (5.060200020095072,5.286373685780609)-- (13.369573669975104,2.38775497070618);
\draw [line width=1.pt,color=blue] (8.660147857950152,4.030577928389302)-- (9.187938202710452,-0.6758432242582386);
\draw [line width=.6pt,color=red] (0.03592758063272827,-1.0905874873831358)-- (9.187938202710452,-0.6758432242582386);
\draw [line width=.6pt,color=red] (6.30883577428098,-2.7851645823497253)-- (8.660147857950152,4.030577928389302);
\draw [line width=.6pt,color=red] (9.187938202710452,-0.6758432242582386)-- (5.060200020095072,5.286373685780609);
\draw [line width=.6pt,color=red] (0.6602454577256823,4.394491004219247)-- (8.660147857950152,4.030577928389302);
\draw [line width=.6pt,color=orange] (5.060200020095072,5.286373685780609)-- (6.30883577428098,-2.7851645823497253);
\draw [line width=.6pt,color=orange] (0.6602454577256823,4.394491004219247)-- (13.369573669975104,2.38775497070618);
\draw [line width=.6pt,color=orange] (0.03592758063272827,-1.0905874873831358)-- (13.369573669975104,2.38775497070618);
\draw [line width=.6pt,color=black] (-1.3083754597218915,0.9455427185459117)-- (13.369573669975104,2.38775497070618);
\draw [line width=.6pt,color=black] (-0.2754585430328719,1.0470341067193807)-- (6.30883577428098,-2.7851645823497253);
\draw [line width=.6pt,color=black] (-1.3083754597218915,0.9455427185459117)-- (5.060200020095072,5.286373685780609);
\draw [line width=.6pt,color=black] (2.4282449556529566,1.3126920864247362)-- (9.187938202710452,-0.6758432242582386);
\draw [line width=.6pt,color=black] (2.4282449556529566,1.3126920864247362)-- (8.660147857950152,4.030577928389302);
\begin{small}
\draw [fill=white] (0.03592758063272827,-1.0905874873831358) circle (1.8pt);
\draw[color=black] (0.03592758063272827,-1.12) node[anchor=north] {$A_{m-3}^m$};
\draw [fill=white] (6.30883577428098,-2.7851645823497253) circle (1.8pt);
\draw[color=black] (6.30883577428098,-2.81) node[anchor=north] {$A_{m-2}^m$};
\draw [fill=white] (5.060200020095072,5.286373685780609) circle (1.8pt);
\draw[color=black] (5.060200020095072,5.286373685780609) node[anchor=south] {$A_1^m$};
\draw [fill=white] (0.6602454577256823,4.394491004219247) circle (1.8pt);
\draw[color=black] (0.6602454577256823,4.394491004219247) node[anchor=south] {$A_2^m$};
\draw [fill=white] (13.369573669975104,2.38775497070618) circle (1.8pt);
\draw[color=black] (13.1,2.3) node[anchor=south west] {$A_{m-1}^{m-1}$};
\draw [fill=white] (9.187938202710452,-0.6758432242582386) circle (1.8pt);
\draw[color=black] (9.43,-0.6758432242582386) node[anchor=north] {$A_{m-1}^m$};
\draw [fill=white] (8.660147857950152,4.030577928389302) circle (1.8pt);
\draw[color=black] (8.7,4.030577928389302) node[anchor=south] {$A_m^m$};
\draw [fill=red] (3.15448067187904,-0.949263131757422) circle (1.8pt);
\draw[color=red] (3.5,-0.67) node {$D_{m-2}^m$};
\draw [fill=red] (7.234736207575413,-0.10125929059924044) circle (1.8pt);
\draw[color=red] (6.56,-0.25) node {$D_{m-1}^m$};
\draw [fill=red] (6.562855573076682,3.11589727676716) circle (1.8pt);
\draw[color=red] (6.4,3.0) node[anchor=north] {$D_m^m$};
\draw [fill=red] (3.5582767711002856,4.262660459752597) circle (1.8pt);
\draw[color=red] (3.45,4.26) node[anchor=south] {$D_1^m$};
\draw [fill=white] (2.4282449556529566,1.3126920864247362) circle (1.8pt);
\draw[color=black] (2.66,.96) node {$G_{m-1}$};
\draw [fill=orange] (2.356973706295705,-0.4850971937319243) circle (1.8pt);
\draw[color=orange] (2.5,-.05) node {$D_{m-2}^{m-1}$};
\draw [fill=orange] (5.626302753969001,1.6269238703812807) circle (1.8pt);
\draw[color=orange] (5.6,1.7) node[anchor=north west] {$D_{m-1}^{m-1}$};
\draw [fill=orange] (3.1702424828668443,3.998175684460116) circle (1.8pt);
\draw[color=orange] (3.37,3.97) node[anchor=north] {$D_1^{m-1}$};
\draw [fill=white] (-0.2754585430328719,1.0470341067193807) circle (1.8pt);
\draw[color=black] (.1,1.33) node {$G_{m-2}$};
\draw [fill=white] (-1.3083754597218915,0.9455427185459117) circle (1.8pt);
\draw[color=black] (-1.5,1.25) node {$G_1$};
\end{small}
\end{tikzpicture}
\caption{Proof of Theorem~\ref{thm:cevafull}.}\label{fig-ceva-n-proof}
\end{center}
\end{figure}
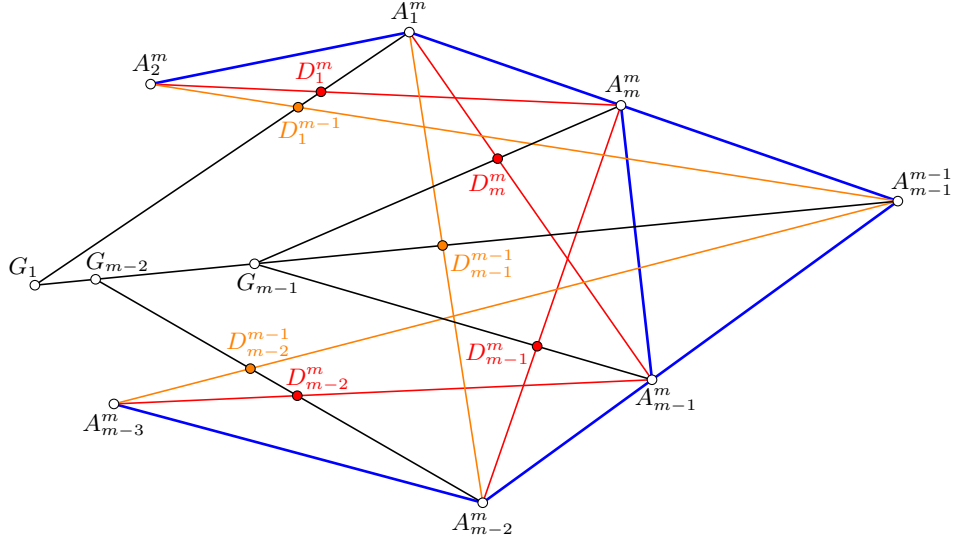

By using the area principle, we can rewrite all the fractions in~(\ref{eq:cevafull3}). Let us start with the left-hand side:
\begin{align}
\frac{A_{m-3}^mD_{m-2}^m}{D_{m-2}^m A_{m-1}^m} &= \frac{[A_{m-3}^mA_{m-2}^mG_{m-2}]}{[A_{m-2}^mA_{m-1}^mG_{m-2}]}\\
\frac{A_{m-2}^mD_{m-1}^m}{D_{m-1}^m A_{m}^m} &= \frac{[A_{m-2}^mA_{m-1}^mG_{m-1}]}{[A_{m-1}^mA_{m}^mG_{m-1}]}\\
\frac{A_{m-1}^mD_{m}^m}{D_{m}^m A_{1}^m} &= \frac{[A_{m-1}^mA_{m}^mG_{m-1}]}{[A_{m}^mA_{1}^mG_{m-1}]}\\
\frac{A_{m}^mD_{1}^m}{D_{1}^m A_{2}^m} &= \frac{[A_{m}^mA_{1}^mG_{1}]}{[A_{1}^mA_{2}^mG_{1}]}
\end{align}
Also, note that:
\begin{align}
\frac{[A_{m-2}^mA_{m-1}^mG_{m-1}]}{[A_{m-2}^mA_{m-1}^mG_{m-2}]}&=\frac{A_{m-1}^{m-1}G_{m-1}}{A_{m-1}^{m-1}G_{m-2}}\\
\frac{[A_{m}^mA_{1}^mG_{1}]}{[A_{m}^mA_{1}^mG_{m-1}]}&=\frac{A_{m-1}^{m-1}G_{1}}{A_{m-1}^{m-1}G_{m-1}}
\end{align}
Now by putting all of this together, we get that the left-hand side of~(\ref{eq:cevafull3}) is equal to
\begin{equation}\label{eq:cevafull4}
\frac{[A_{m-3}^mA_{m-2}^mG_{m-2}]}{[A_{1}^mA_{2}^mG_{1}]}\cdot \frac{A_{m-1}^{m-1}G_1}{A_{m-1}^{m-1}G_{m-2}}.
\end{equation}

Let us do the same for the right-hand side. Rewriting the fractions gives us:
\begin{align}
\frac{A_{m-3}^{m-1}D_{m-2}^{m-1}}{D_{m-2}^{m-1} A_{m-1}^{m-1}} &= \frac{[A_{m-3}^{m-1}A_{m-2}^{m-1}G_{m-2}]}{[A_{m-2}^{m-1}A_{m-1}^{m-1}G_{m-2}]}\\
\frac{A_{m-2}^{m-1}D_{m-1}^{m-1}}{D_{m-1}^{m-1} A_{1}^{m-1}} &= \frac{[A_{m-2}^{m-1}A_{m-1}^{m-1}G_{m-2}]}{[A_{m-1}^{m-1}A_{1}^{m-1}G_{1}]}\cdot \frac{A_{m-1}^{m-1}G_1}{A_{m-1}^{m-1}G_{m-2}}\\
\frac{A_{m-1}^{m-1}D_{1}^{m-1}}{D_{1}^{m-1} A_{2}^{m-1}} &= \frac{[A_{m-1}^{m-1}A_{1}^{m-1}G_{1}]}{[A_{1}^{m-1}A_{2}^{m-1}G_{1}]}
\end{align}
Hence the right-hand side of~(\ref{eq:cevafull3}) is equal to
\begin{equation}\label{eq:cevafull5}
\frac{[A_{m-3}^{m-1}A_{m-2}^{m-1}G_{m-2}]}{[A_{1}^{m-1}A_{2}^{m-1}G_{1}]}\cdot \frac{A_{m-1}^{m-1}G_1}{A_{m-1}^{m-1}G_{m-2}},
\end{equation}
which is equal to~(\ref{eq:cevafull4}). We have proven now that in every step, the left-hand side of~(\ref{eq:cevafull}) is constant.\\

Now assume that there exists a sequence of $(n-3)$ steps which reduces $P^n$ to a triangle $P^3$ for which $g_1^3,g_2^3,g_3^3$ are concurrent. Then by Ceva,
\begin{equation}\label{eq:cevafull6}\frac{A_1^3D_2^3}{D_2^3 A_3^3}\cdot \frac{A_2^3D_3^3}{D_3^3 A_1^3}\cdot \frac{A_3^3D_1^3}{D_1^3 A_{2}^3}=1.\end{equation}
But as the product is constant, it must have been $1$ right from the start. This implies that also every other sequence of $(n-3)$ steps ends up with a triangle for which this product is still $1$, hence by Ceva the lines $g_1^3,g_2^3,g_3^3$ of every such possible triangle are concurrent. This completes the proof.
\end{proof}

We will call lines $g_1^n,\dots,g_n^n$ of an $n$-gon with the property as in Theorem~\ref{thm:cevafull} \textit{pseudo-concurrent}. Note that concurrency always implies pseudo-concurrency. Also, note that the order of the lines $g_1^n,\dots,g_n^n$ respectively of the vertices $A_1^n,\dots,A_n^n$ matters, i.e. if the lines $g_1^4,g_2^4,g_3^4,g_4^4$ of the quadrilateral $A_1^4A_2^4A_3^4A_4^4$ are pseudo-concurrent, then this does not imply that the lines $g_1^4,g_3^4,g_2^4,g_4^4$ of the quadrilateral $A_1^4A_3^4A_2^4A_4^4$ are pseudo-concurrent.\\

A rather trivial but still quite nice consequence of this definition is the following result:

\begin{theorem}\label{thm:inneranglebisectorspseudoconcurrent}
The internal angle bisectors of every $n$-gon are pseudo-concurrent.
\end{theorem}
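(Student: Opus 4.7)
The plan is to invoke the criterion established in Theorem~\ref{thm:cevafull}: the lines $g_1^n,\ldots,g_n^n$ are pseudo-concurrent if and only if the product identity~(\ref{eq:cevafull}) holds. Thus the entire task reduces to verifying
$$\prod_{i=1}^n \frac{A_i^n D_{i+1}^n}{D_{i+1}^n A_{i+2}^n}=1,\qquad D_{i+1}^n=g_{i+1}^n\times(A_i^n\times A_{i+2}^n),$$
in the special case that each $g_i^n$ is the internal angle bisector of the polygon at $A_i^n$.

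The key observation is that in the triangle $A_i^n A_{i+1}^n A_{i+2}^n$, the line carrying the polygon's internal angle bisector at $A_{i+1}^n$ coincides with the triangle's internal angle bisector at that vertex. (At a convex vertex this is immediate; at a reflex vertex the polygon's bisector ray is the opposite ray of the triangle's bisector, but both rays determine the same line.) Consequently $D_{i+1}^n$ is the foot of the triangle's internal angle bisector on the opposite side $A_i^n\times A_{i+2}^n$, and the classical angle bisector theorem yields
$$\frac{A_i^n D_{i+1}^n}{D_{i+1}^n A_{i+2}^n}=\frac{|A_i^n A_{i+1}^n|}{|A_{i+1}^n A_{i+2}^n|}.$$
Since $D_{i+1}^n$ lies between $A_i^n$ and $A_{i+2}^n$, the signed ratio is positive and equals the side-length quotient on the right. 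Setting $s_j:=|A_j^n A_{j+1}^n|$ with indices taken cyclically, the product collapses to the telescoping expression
$$\prod_{i=1}^n \frac{s_i}{s_{i+1}}=1,$$
which gives the required identity.

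The only step demanding any care is the reconciliation of the polygon's and the triangle's internal bisectors at reflex vertices; once one observes that they agree \emph{as lines}, the argument is entirely routine. Implicit throughout is the usual non-degeneracy assumption that no three consecutive vertices are collinear, so that each $D_{i+1}^n$ is well defined; without this the statement itself would have to be re-interpreted in the projective closure.
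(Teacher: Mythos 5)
Your proof is correct, but it takes a genuinely different route from the paper's. The paper works directly with the definition of pseudo-concurrency via the reduction procedure: it observes that when two adjacent internal bisectors $g_i^m$, $g_{i+1}^m$ are replaced by $g_i^{m-1}=A_i^{m-1}\times(g_i^m\times g_{i+1}^m)$, the new line is again an internal angle bisector of the reduced polygon, so after $n-3$ steps one is left with the three internal bisectors of a triangle, which meet in the incenter. You instead invoke the equivalent numerical criterion of Theorem~\ref{thm:cevafull} and verify the product identity~(\ref{eq:cevafull}) directly: the classical angle bisector theorem in each triangle $A_i^nA_{i+1}^nA_{i+2}^n$ gives $\frac{A_i^nD_{i+1}^n}{D_{i+1}^nA_{i+2}^n}=s_i/s_{i+1}$ with $s_j=|A_j^nA_{j+1}^n|$, and the cyclic product telescopes to $1$. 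Your computation is sound, including the point that at a reflex vertex the polygon's bisector and the triangle's bisector agree as lines, so each foot is interior to the segment and the signed ratio is positive. What your route buys is that it sidesteps the one nontrivial geometric assertion in the paper's proof --- that the reduced line is again an internal bisector of $P^{m-1}$ --- which the paper states without detailed justification; what it gives up is the structural picture of bisectors being preserved under reduction, which is exactly the mechanism the paper reuses (with a parity count of external bisectors) to prove Theorem~\ref{thm:anglebisectorspseudoconcurrent}. Note, though, that your telescoping argument adapts to that theorem just as easily, since an external bisector contributes $-s_i/s_{i+1}$ and an even number of sign changes leaves the product equal to $1$.
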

\begin{proof}
Using the notation of Theorem~\ref{thm:cevafull}, this theorem follows by observing that whenever we replace some $g_i^m$ and $g_{i+1}^m$ by a new line $g_i^{m-1}$, this new line will thanks to its definition also be an internal angle bisector of the reduced $(m-1)$-gon $P^{m-1}$ (this is also true if $P^m$ is not convex). After $(n-3)$ steps, we get a triangle $P^3$ and lines $g_1^3,g_2^3,g_3^3$ which are the internal angle bisectors of this triangle and hence are concurrent. Thus, the internal angle bisectors of the original $n$-gon are pseudo-concurrent.
\end{proof}

It is possible to consider the external angle bisectors as well and get the following version:
\begin{theorem}\label{thm:anglebisectorspseudoconcurrent}
Let $A_1,\dots,A_n$ be an $n$-gon. For every $i\in\{1,\dots,n\}$, choose one of the two angle bisectors in $A_i$. If the number of chosen external angle bisectors is even, then the chosen angle bisectors are pseudo-concurrent.
\end{theorem}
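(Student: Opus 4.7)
The plan is to replicate the proof of Theorem~\ref{thm:inneranglebisectorspseudoconcurrent} while keeping track of the internal/external dichotomy by means of a sign $\sigma_i\in\{+1,-1\}$ at each vertex $A_i^n$. After fixing orientations of all sides of $P^n$, I would set $\sigma_i=+1$ if the chosen bisector $g_i^n$ is the internal bisector at $A_i^n$ and $\sigma_i=-1$ otherwise, so that $g_i^n$ is the locus of points $P$ with $d(P,A_{i-1}^n\times A_i^n)=\sigma_i\,d(P,A_i^n\times A_{i+1}^n)$ in signed distances. With this convention, the hypothesis that an even number of external bisectors are chosen is precisely $\prod_{i=1}^n\sigma_i=+1$.

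Next I would verify that the reduction step of Theorem~\ref{thm:cevafull} sends angle bisectors to angle bisectors. Namely, if $g_i^m$ and $g_{i+1}^m$ are angle bisectors at $A_i^m$ and $A_{i+1}^m$, then the new line $g_i^{m-1}=A_i^{m-1}\times(g_i^m\times g_{i+1}^m)$ is again an angle bisector at the new vertex $A_i^{m-1}=(A_{i-1}^m\times A_i^m)\times(A_{i+1}^m\times A_{i+2}^m)$ of $P^{m-1}$. Indeed, the point $g_i^m\times g_{i+1}^m$ has equal absolute distance to $A_{i-1}^m\times A_i^m$ and to $A_i^m\times A_{i+1}^m$ (because it lies on $g_i^m$), and equal absolute distance to $A_i^m\times A_{i+1}^m$ and to $A_{i+1}^m\times A_{i+2}^m$ (because it lies on $g_{i+1}^m$); by transitivity it is equidistant from the two new sides meeting at $A_i^{m-1}$, so it lies on one of the two bisectors there. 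Writing the two defining signed-distance equations out and combining them at the intersection point yields immediately the sign update rule
\[\sigma_i^{m-1}=\sigma_i^m\,\sigma_{i+1}^m.\]

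Consequently the product $\prod_k\sigma_k^m$ is invariant under every reduction step. After $n-3$ reductions we therefore arrive at a triangle $P^3$ whose three chosen bisectors satisfy $\sigma_1^3\sigma_2^3\sigma_3^3=+1$. The only such configurations in a triangle consist of three internal bisectors (concurrent at the incenter) or of one internal together with two external bisectors (concurrent at the corresponding excenter). In either case $g_1^3,g_2^3,g_3^3$ meet in a point, so by Theorem~\ref{thm:cevafull} the original choice of bisectors in $P^n$ is pseudo-concurrent.

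The most delicate issue is to make the identification $\sigma_i=\pm1 \leftrightarrow$ internal/external consistent through the reduction, especially when the intermediate polygons $P^m$ cease to be convex or simple. This is handled by working exclusively with signed distances once the orientations of all sides are fixed: the sign update rule then becomes a clean algebraic identity independent of any convexity assumption, and the closing triangle case reduces to the classical fact that the four concurrent triples of angle bisectors of a triangle are exactly those with $\prod_i\sigma_i=+1$.
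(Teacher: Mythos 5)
Your proof is correct and follows essentially the same route as the paper: both arguments show that the reduction step of Theorem~\ref{thm:cevafull} sends a pair of angle bisectors to an angle bisector whose internal/external type is the ``product'' of the two input types, so the parity of external bisectors is invariant, and the terminal triangle then has either three internal or one internal and two external bisectors, which are concurrent. Your $\sigma_i$-bookkeeping via signed distances is simply a cleaner formalization of the paper's verbal case check.
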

\begin{proof}
This follows by thoroughly checking all cases when replacing $g_i^m$ and $g_{i+1}^m$ by $g_i^{m-1}$ and seeing that
\begin{itemize}
\item whenever $g_i^m$ and $g_{i+1}^m$ are both internal or both external angle bisectors, then $g_i^{m-1}$ is an internal angle bisector, and
\item whenever one of $g_i^m$ and $g_{i+1}^m$ is an internal and the other an external angle bisector, then $g_i^{m-1}$ is an external angle bisector,
\end{itemize}
hence the number of external angle bisectors will always stay even. In the end, we get a triangle $P^3$ and either three internal angle bisectors or one internal and two external angle bisectors, hence the remaining angle bisectors will be concurrent. From this, we get that the original angle bisectors are pseudo-concurrent.
\end{proof}

We can also generalize Menelaos in a similar way:

\begin{theorem}[Two-sided Menelaos' Theorem for $n$-gons]\label{thm:menelaosfull}
For $n\geq 3$, let $P^n=[A_1^n,\dots,A_n^n]$ be an $n$-gon and let $B_1^n\dots,B_n^n$ be points such that for each $i\in\{1,\dots,n\}$, $B_i^n$ lies on $A_i^n\times A_{i+1}^n$.
For $m\in\{4,\dots,n\}$, the $m$-gon $P^m$ can be reduced in a step to the $(m-1)$-gon $P^{m-1}$ by obeying the following procedure:
\begin{itemize}
\item Choose a vertex $A_i^m$, $i\in\{1,\dots,m\}$.
\item Replace $B_{i-1}^m$ and $B_{i}^m$ by\\
\[B_{i-1}^{m-1}=(A_{i-1}^m\times A_{i+1}^m)\times (B_{i-1}^m\times B_{i}^m).\]
\item Let
\[A_j^{m-1}=\begin{cases}A_j^m&\text{for $j\in\{1,\dots,i-1\}$,}\\A_{j+1}^m&\text{for $j\in\{i,\dots,m-1\}.$}\end{cases}\]
\item Let
\[B_j^{m-1}=\begin{cases}B_j^m&\text{for $j\in\{1,\dots,i-2\}$,}\\B_{j+1}^m&\text{for $j\in\{i,\dots,m-1\}.$}\end{cases}\]
\end{itemize}
If it is possible to reduce $P^n$ in $(n-3)$ steps to a triangle $P^3$ for which $B_1^3,B_2^3,B_3^3$ are collinear, then every sequence of $(n-3)$ steps will lead to such a triangle (see Figure~\ref{fig-menelaos-n}). Furthermore, this is the case if and only if
\begin{equation}\label{eq:menelaosfull}\frac{A_1^nB_1^n}{B_1^n A_2^n}\cdot \frac{A_2^nB_2^n}{B_2^n A_3^n}\cdot \ldots\cdot \frac{A_n^nB_n^n}{B_n^n A_{1}^n}=(-1)^n.\end{equation}
\end{theorem}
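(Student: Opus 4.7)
The plan is to mirror the proof of Theorem~\ref{thm:cevafull}: I will show that the product on the left-hand side of~(\ref{eq:menelaosfull}) is multiplied by exactly $-1$ under every admissible reduction step, so that the invariant ``product $=(-1)^m$ for the current $m$-gon'' is preserved. After $n-3$ steps, the assertion becomes classical Menelaos on the triangle $P^3$.

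The heart of the argument is a single application of the classical Menelaos Theorem to the triangle $A_{i-1}^m A_i^m A_{i+1}^m$ with transversal $B_{i-1}^m\times B_i^m$. By construction, this transversal meets the side $A_{i-1}^m\times A_i^m$ at $B_{i-1}^m$, the side $A_i^m\times A_{i+1}^m$ at $B_i^m$, and the side $A_{i-1}^m\times A_{i+1}^m$ precisely at $B_{i-1}^{m-1}$. Rearranging Menelaos and using $A_{i-1}^{m-1}=A_{i-1}^m$, $A_i^{m-1}=A_{i+1}^m$ yields
\begin{equation*}
\frac{A_{i-1}^m B_{i-1}^m}{B_{i-1}^m A_i^m}\cdot\frac{A_i^m B_i^m}{B_i^m A_{i+1}^m}
=-\,\frac{A_{i-1}^{m-1} B_{i-1}^{m-1}}{B_{i-1}^{m-1} A_i^{m-1}}.
\end{equation*}
All other factors of the product for the $m$-gon agree term-by-term with those for the $(m-1)$-gon, because the remaining vertices and the remaining $B$-points are unchanged. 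Hence one reduction step multiplies the full product by exactly $-1$.

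Iterating, the original $n$-gon product equals $(-1)^n$ if and only if the triangle $P^3$ produced after $n-3$ reductions has product $(-1)^3=-1$. By the classical Menelaos Theorem, the latter is equivalent to the collinearity of $B_1^3,B_2^3,B_3^3$. Since the entire chain of equivalences depends only on the initial product (not on the chosen reduction path), once some sequence of $n-3$ reductions yields a collinear triple, every sequence must also yield a collinear triple; and~(\ref{eq:menelaosfull}) holds if and only if this happens.

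The only genuinely delicate point is the cyclic bookkeeping: one must check that the step works uniformly when $i=1$ or $i=m$ (so that the new side $A_{i-1}^m\times A_{i+1}^m$ is really a side of $P^{m-1}$), and that the re-indexing $B_j^{m-1}$ shifts correctly without collision with the newly introduced $B_{i-1}^{m-1}$. Tracking the signs carefully (each reduction contributes one factor of $-1$, matching the change from $(-1)^m$ to $(-1)^{m-1}$) is the only place where an error would be easy to make; once this is done the argument is essentially a one-step application of Menelaos per reduction.
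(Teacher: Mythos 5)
Your proposal is correct and takes essentially the same route as the paper: the paper likewise applies the classical Menelaos theorem once per reduction step to the triangle $A_{i-1}^m A_i^m A_{i+1}^m$ (there written with $i=m$ by symmetry) with transversal $B_{i-1}^m\times B_i^m$ to show that the product changes by exactly a factor $-1$, and then concludes by classical Menelaos on the final triangle $P^3$.
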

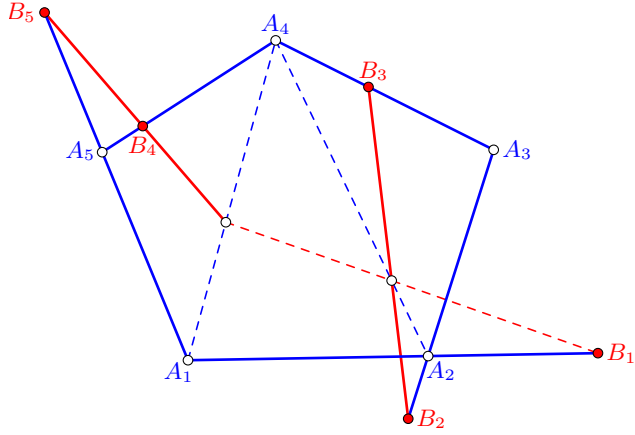
\begin{figure}[h]
\begin{center}
\begin{tikzpicture}[line cap=round,line join=round,x=15,y=15]
\clip(-0.4232218529662531,-4.867553649509905) rectangle (15.259281677451582,7.141214796956144);
\draw [line width=1pt,color=blue] (1.8940165005522343,2.6850491978933007)-- (6.222343895627236,5.4974525870834645);
\draw [line width=1pt,color=blue] (6.222343895627236,5.4974525870834645)-- (11.660131863649159,2.744508043330725);
\draw [line width=.6pt,dash pattern=on 3pt off 3pt,color=blue] (4.034534936299505,-2.5473292006000285)-- (6.222343895627236,5.4974525870834645);
\draw [line width=.6pt,dash pattern=on 3pt off 3pt,color=blue] (6.222343895627236,5.4974525870834645)-- (10.025013614119993,-2.443276221084536);
\draw [line width=.6pt,dash pattern=on 3pt off 3pt,color=red] (4.978492984891182,0.9236943937000124)-- (14.26146916136787,-2.369690144581471);
\draw [line width=1pt,color=red] (0.45648149954395967,6.199023644802416)-- (4.978492984891182,0.9236943937000124);
\draw [line width=1pt,color=blue] (9.52843345225824,-4.0187896437186446)-- (11.660131863649159,2.744508043330725);
\draw [line width=1pt,color=red] (9.52843345225824,-4.0187896437186446)-- (8.53677315906074,4.325745368975682);
\draw [line width=1pt,color=blue] (0.45648149954395967,6.199023644802416)-- (4.034534936299505,-2.5473292006000285);
\draw [line width=1pt,color=blue] (4.034534936299505,-2.5473292006000285)-- (14.26146916136787,-2.369690144581471);
\begin{small}
\draw [fill=white] (4.034534936299505,-2.5473292006000285) circle (1.8pt);
\draw[color=blue] (3.8,-2.85) node {$A_1$};
\draw [fill=white] (10.025013614119993,-2.443276221084536) circle (1.8pt);
\draw[color=blue] (10.35,-2.82) node {$A_2$};
\draw [fill=white] (11.660131863649159,2.744508043330725) circle (1.8pt);
\draw[color=blue] (11.66,2.74) node[anchor=west] {$A_3$};
\draw [fill=white] (6.222343895627236,5.4974525870834645) circle (1.8pt);
\draw[color=blue] (6.223078446763723,5.9) node {$A_4$};
\draw [fill=white] (1.8940165005522343,2.6850491978933007) circle (1.8pt);
\draw[color=blue] (1.35,2.7169316851002314) node {$A_5$};
\draw [fill=red] (14.26146916136787,-2.369690144581471) circle (1.8pt);
\draw[color=red] (14.26,-2.37) node[anchor=west] {$B_1$};
\draw [fill=white] (4.978492984891182,0.9236943937000124) circle (1.8pt);
\draw [fill=white] (9.11549327715157,-0.5440173119404466) circle (1.8pt);
\draw [fill=red] (0.45648149954395967,6.199023644802416) circle (1.8pt);
\draw[color=red] (.456,6.2) node[anchor=east] {$B_5$};
\draw [fill=red] (2.9053611534502966,3.3421873646332436) circle (1.8pt);
\draw[color=red] (2.904866112871789,3.3) node[anchor=north] {$B_4$};
\draw [fill=red] (8.53677315906074,4.325745368975682) circle (1.8pt);
\draw[color=red] (8.66,4.73) node {$B_3$};
\draw [fill=red] (9.52843345225824,-4.0187896437186446) circle (1.8pt);
\draw[color=red] (9.52,-4.02) node[anchor=west] {$B_2$};
\end{small}
\end{tikzpicture}
\caption{Theorem of Menelaos for the pentagon $A_1^5\ldots A_5^5$.}\label{fig-menelaos-n}
\end{center}
\end{figure}
\begin{proof}
We will show that in each step, the left-hand side of~(\ref{eq:menelaosfull}) will only change by a factor $-1$. Precisely, we have to show that
\begin{equation}\label{eq:menelaosfull2}
\frac{A_{m-1}^mB_{m-1}^m}{B_{m-1}^mA_m^m}\cdot\frac{A_m^mB_m^m}{B_m^mA_1^m}=-\frac{A_{m-1}^{m-1}B_{m-1}^{m-1}}{B_{m-1}^{m-1}A_1^{m-1}}
\end{equation}
if the vertex $A_m^m$ was chosen when reducing $P^m$ to $P^{m-1}$. But this follows directly by applying Menelaos' 
Theorem to the triangle $A_{m-1}^mA_m^mA_1^m$ (see Figure~\ref{fig-menelaos-proof}).
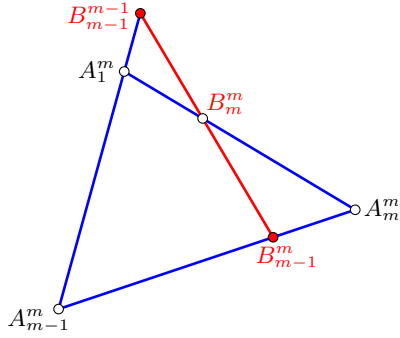
\begin{figure}[h]
\begin{center}
\begin{tikzpicture}[line cap=round,line join=round,x=19,y=15]
\clip(-1.8,-2.235170262053546) rectangle (6.071000392531282,6.505280017247809);
\draw [line width=1pt,color=blue] (-0.6032550078195817,-1.6554465190386645)-- (5.238576556407345,0.8418249893331514);
\draw [line width=1pt,color=blue] (5.238576556407345,0.8418249893331514)-- (0.6899748804443944,4.320167447422467);
\draw [line width=1pt,color=red] (1.0067518811250238,5.783895657463995)-- (3.6215219668120864,0.15056501210159023);
\draw [line width=1pt,color=blue] (1.0067518811250238,5.783895657463995)-- (-0.6032550078195817,-1.6554465190386645);
\begin{small}
\draw [fill=white] (-0.6032550078195817,-1.6554465190386645) circle (1.8pt);
\draw[color=black] (-0.99,-1.95) node {$A_{m-1}^m$};
\draw [fill=white] (5.238576556407345,0.8418249893331514) circle (1.8pt);
\draw[color=black] (5.238,.84) node[anchor=west] {$A_m^m$};
\draw [fill=white] (0.6899748804443944,4.320167447422467) circle (1.8pt);
\draw[color=black] (0.69,4.32) node[anchor=east] {$A_1^m$};
\draw [fill=white] (2.2343145324361817,3.1392018311934526) circle (1.8pt);
\draw[color=red] (2.13,3.03) node[anchor=south west] {$B_{m}^m$};
\draw [fill=red] (3.6215219668120864,0.15056501210159023) circle (1.8pt);
\draw[color=red] (3.9,.15) node[anchor=north] {$B_{m-1}^m$};
\draw [fill=red] (1.0067518811250238,5.783895657463995) circle (1.8pt);
\draw[color=red] (1,5.7) node[anchor=east] {$B_{m-1}^{m-1}$};
\end{small}
\end{tikzpicture}
\caption{Proof of Theorem~\ref{thm:menelaosfull}.}\label{fig-menelaos-proof}
\end{center}
\end{figure}

Hence, if there exists a sequence of $(n-3)$ steps that reduces $P^n$ to a triangle $P^3$ for which $B_1^3,B_2^3,B_3^3$ are collinear, then by Menelaos' Theorem,
\begin{equation}\label{eq:menelaosfull3}
\frac{A_1^3B_1^3}{B_1^3A_2^3}\cdot\frac{A_2^3B_2^3}{B_2^3A_3^3}\cdot\frac{A_3^3B_3^3}{B_3^3A_1^3}=-1.
\end{equation}
But as the product only changed by a factor $-1$ in each of the $(n-3)$ steps, we get that
\begin{equation}\label{eq:menelaosfull4}
\frac{A_1^nB_1^n}{B_1^n A_2^n}\cdot \frac{A_2^nB_2^n}{B_2^n A_3^n}\cdot \ldots\cdot \frac{A_n^nB_n^n}{B_n^n A_{1}^n}=(-1)\cdot (-1)^{n-3}=(-1)^n.\end{equation}
This implies that also every other sequence of $(n-3)$ steps leads to a triangle for which $B_1^3,B_2^3,B_3^3$ are collinear. This completes the proof.
\end{proof}

As before, we suggest to call points $B_1^n,\dots,B_n^n$ of an $n$-gon with the property as in Theorem~\ref{thm:menelaosfull} \textit{pseudo-collinear}. Again, collinearity implies pseudo-collinearity, and the order of the points matters.

An immediate consequence of these new definitions is the following result:

\begin{corollary}\label{cor:dualconfig}
If the points $B_1,\dots,B_n$ on the sides of an $n$-gon are pseudo-collinear, then the lines $b_1,\dots,b_n$ in the dual configuration are pseudo-concurrent and vice-versa.
\end{corollary}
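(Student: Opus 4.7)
The plan is to invoke the principle of projective duality and verify that the reduction procedure in Theorem~\ref{thm:menelaosfull} defining pseudo-collinearity is, step for step, the projective dual of the reduction procedure in Theorem~\ref{thm:cevafull} defining pseudo-concurrency. I would fix a polarity sending each primal vertex $A_i$ to a line $A_i^*$ and each primal side $a_i = A_i\times A_{i+1}$ to a point $V_i := a_i^*$. The dual configuration is then the $n$-gon with vertices $V_1,\ldots,V_n$ and sides $A_i^* = V_{i-1}\times V_i$; each point $B_i$ on $a_i$ dualizes to a line $b_i := B_i^*$ through $V_i$, so the cyclic adjacency and the incidence pattern are preserved.

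Next I would check that one Menelaos reduction step on the primal $n$-gon corresponds to one Ceva reduction step on the dual $n$-gon. Choosing primal vertex $A_i^m$ for removal produces the new primal side $A_{i-1}^m\times A_{i+1}^m$; under duality this becomes the new dual vertex $V_i^{\mathrm{new}} = A_{i-1}^{m,*}\cap A_{i+1}^{m,*}$, which is precisely the point the Ceva reduction introduces when collapsing the adjacent dual vertices $V_{i-1}^m,V_i^m$. Moreover, the new Menelaos point
\[
B_{i-1}^{m-1} = (A_{i-1}^m\times A_{i+1}^m)\cap(B_{i-1}^m\times B_i^m)
\]
dualizes to the line $V_i^{\mathrm{new}}\times(b_{i-1}^m\cap b_i^m)$, which matches the formula $g_i^{m-1} = A_i^{m-1}\times(g_i^m\times g_{i+1}^m)$ of Theorem~\ref{thm:cevafull} under the identification $g_j = b_j$. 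The base case $n=3$ is immediate: three points on the sides of a triangle are collinear if and only if the three dual lines through the vertices of the dual triangle are concurrent.

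Combining these observations, any sequence of $(n-3)$ Menelaos reductions on the primal $n$-gon terminating in collinear $B_1^3,B_2^3,B_3^3$ translates under duality to a sequence of $(n-3)$ Ceva reductions on the dual $n$-gon terminating in concurrent $b_1^3,b_2^3,b_3^3$, and the argument is reversible because the polarity is an involution. This yields both implications of the corollary simultaneously.

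The main obstacle I expect is the combinatorial bookkeeping: one has to confirm that the cyclic order of the dual vertices $V_1,\ldots,V_n$ determined by the adjacency $V_{i-1}\sim V_i$ along $A_i^*$ agrees with the cyclic order assumed by Theorem~\ref{thm:cevafull}, and that ``choosing vertex $A_i^m$'' in a Menelaos reduction really does correspond to ``choosing the adjacent pair $V_{i-1}^m, V_i^m$'' in the Ceva reduction, with all indices shifted consistently. Once this is pinned down, the equivalence follows purely formally from duality, without any additional computation.
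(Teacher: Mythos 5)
Your proof is correct and is essentially the argument the paper has in mind: the paper offers no written proof, presenting the corollary as an ``immediate consequence'' of the two definitions, and your verification that a Menelaos reduction step (removing $A_i^m$) dualizes exactly to a Ceva reduction step (collapsing the adjacent dual vertices $V_{i-1}^m, V_i^m$), together with the duality of the $n=3$ base case, is precisely the content being taken for granted. The index bookkeeping you flag does check out, so nothing further is needed.
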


The new concepts of \textit{pseudo-concurrency} and \textit{pseudo-collinearity} are a very natural way to generalize the well-known terms \textit{concurrency} and \textit{collinearity}. As Theorem~\ref{thm:inneranglebisectorspseudoconcurrent} and Corollary~\ref{cor:dualconfig} show, it's easy to transfer some basic results to these new concepts. We are convinced that many more results can be carried over to this new setting, and are looking forward to whatever results this new perception may inspire.

\bibliographystyle{plain}

\end{document}